\documentclass[a4paper,12pt]{article}
\usepackage{amsthm}
\usepackage{amsfonts}
\usepackage{amsmath}
\usepackage{amssymb}
\usepackage[pdftex]{graphicx}
\usepackage{hyperref}
\usepackage{xcolor}
%\usepackage{showlabels}
%\usepackage{datetime}
%\usepackage[backend=biber,style=numeric]{biblatex}

%\addbibresource{bibliography.bib}

\numberwithin{equation}{section}
\newtheorem{theorem}[equation]{Theorem}
\theoremstyle{definition}
\newtheorem{definition}[equation]{Definition}
\theoremstyle{remark}
\newtheorem{remark}[equation]{Note}
\theoremstyle{plain}
\newtheorem{lemma}[equation]{Lemma}
\newtheorem{corollary}[equation]{Corollary}
\newtheorem{proposition}[equation]{Proposition}

\newcommand{\diag}{\operatorname{diag}}

\begin{document}

\title{Curvature on Eschenburg spaces}
\author{Jason DeVito and Peyton Johnson}
\date{}

\maketitle

\begin{abstract}We investigate the curvature of Eschenburg spaces with respect to two different metrics, one constructed by Eschenburg and the other by Wilking.  With respect to the Eschenburg metric, we obtain a simple complete characterization of the curvature of every Eschenburg space in terms of the triples of integers defining the space.  With respect to Wilking's metric, we study all the examples whose natural isometry group acts with cohomogeneity two.  Here, we find that apart from the previously known examples with almost positive curvature, all the remaining examples have open sets of points with zero-curvature planes.
\end{abstract}

\section{Introduction}

Given two triples of integers, $p=(p_1,p_2,p_3)$ and $q=(q_1,q_2,q_3)$ with $\sum p_i = \sum q_i$, there is an action of the compact Lie group $S^1\subseteq \mathbb{C}$ on $SU(3)$, given by $$z \ast A = \diag(z^{p_1}, z^{p_2}, z^{p_3}) A \diag(z^{q_1}, z^{q_2}, z^{q_3})^{-1}.$$  When the action is free, the quotient space is a smooth manifold called an Eschenburg space and denoted by $E_{p,q}$.

Eschenburg spaces, which generalize the homogeneous Aloff-Wallach spaces \cite{AW}, were introduced by Eschenburg \cite{Es} where he showed an infinite sub-family of them admit Riemannian metrics of positive sectional curvature.  These provided the first inhomogeneous examples of positively curved Riemannian manifolds.

In more detail, Eschenburg equipped $SU(3)$ with a certain left $SU(3)$-invariant right $U(2)$-invariant Riemannian metric, endowing $SU(3)$ with a non-negatively curved Riemannian metric with fewer zero-curvature planes than in the standard bi-invariant metric.  The above $S^1$ action is isometric with respect to this new metric, and hence this metric descends to a metric on each Eschenburg space.  We will refer to this metric as the \textit{Eschenburg metric}.  We note that interchanging $p$ and $q$, as well as permuting the $q_i$ can change the isometry type, but not the diffeomorphism type of the Eschenburg space. Hence, the Eschenburg space will typically have six non-isometric Eschenburg metrics.

Using the notation $\underline{p} = \min\{p_1,p_2,p_3\}$ and $\overline{p} = \max\{p_1,p_2,p_3\}$, Eschenburg proved:

\begin{theorem}[Eschenburg]\label{thm:esc} The Eschenburg space $E_{p,q}$ has an Eschenburg metric of positive sectional curvature if and only if $q_i\notin [\underline{p},\overline{p}]$ for any $i=1,2,3$.
\end{theorem}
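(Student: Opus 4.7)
The natural approach is to apply O'Neill's formula to the Riemannian submersion $\pi\colon (SU(3), g_{\text{Esc}}) \to E_{p,q}$. Because $g_{\text{Esc}}$ is $S^1$-invariant, $\pi$ is a Riemannian submersion with totally geodesic fibers; by O'Neill, $E_{p,q}$ inherits non-negative sectional curvature, and a 2-plane in $E_{p,q}$ has zero sectional curvature exactly when its horizontal lift is a zero-curvature 2-plane of $g_{\text{Esc}}$ on which the O'Neill $A$-tensor vanishes. The theorem therefore reduces to deciding, as a function of $(p,q)$, whether there exists $A\in SU(3)$ admitting such a ``flat and unbent'' horizontal 2-plane.

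The first main step is to classify the zero-curvature 2-planes of $(SU(3), g_{\text{Esc}})$. Since $g_{\text{Esc}}$ arises from a bi-invariant metric on $SU(3)$ by a Cheeger-type deformation along the subgroup $U(2)\subset SU(3)$, it is the base of a Riemannian submersion from $SU(3)\times U(2)$ equipped with a product of bi-invariant metrics. Applying O'Neill to this auxiliary submersion, together with the fact that zero-curvature planes in a bi-invariant metric are precisely those spanned by commuting vectors, I would extract a concrete list of left-invariant planes in $\mathfrak{su}(3)$ that serve as zero-curvature planes of $g_{\text{Esc}}$.

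Next I would translate horizontality into linear algebra. Setting $P=\diag(ip_1,ip_2,ip_3)$ and $Q=\diag(iq_1,iq_2,iq_3)$, the $S^1$-Killing field at $A\in SU(3)$ corresponds in the left-invariant frame to $V_A = A^{-1}PA - Q\in\mathfrak{su}(3)$, and a left-invariant plane is horizontal at $A$ iff it is $\langle\cdot,\cdot\rangle_{\text{Esc}}$-orthogonal to $V_A$. The key algebraic observation is that the diagonal entries of $A^{-1}PA$ are $i\sum_j |A_{ji}|^2\, p_j$, i.e.\ convex combinations of $ip_1, ip_2, ip_3$ with weights given by the doubly stochastic matrix $(|A_{ji}|^2)$.

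Combining these, positive curvature fails iff for some $A\in SU(3)$ a plane from step one is orthogonal to $V_A$ and has vanishing $A$-tensor. A case analysis over the classified planes should collapse this into the single numerical condition that some $q_i$ equals a convex combination of $p_1, p_2, p_3$, that is, $q_i\in[\underline{p},\overline{p}]$. The ``if'' direction is proved by explicit construction of an $A$ realizing such a convex combination, while the ``only if'' direction uses that the columns of a unitary matrix have squared entries summing to $1$ to force the convex-hull relation. The main obstacle I anticipate is step one: obtaining a complete and usable classification of zero-curvature planes of $g_{\text{Esc}}$, since restricting attention only to the obvious planes tangent to the standard maximal torus would yield a condition of the wrong strength.
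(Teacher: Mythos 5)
Your overall strategy — classify the zero‑curvature planes of the Eschenburg metric on $SU(3)$, express horizontality as orthogonality to the Killing field $Ad_{A^{-1}}(iP)-iQ$, and then translate into a condition on convex combinations of the $p_i$ and $q_i$ — is exactly the machinery the paper assembles in Proposition \ref{prop:esch}, Theorem \ref{thm:kerineqns}, and Lemma \ref{lem:p} (the paper itself does not re-prove Theorem \ref{thm:esc}; it cites Eschenburg and then notes the theorem is a special case of Theorem \ref{thm:main}). Your computation of the diagonal entries of $A^{-1}PA$ as convex combinations of the $p_j$ is precisely the content of equations \eqref{eqn:2.3}–\eqref{eqn:2.4}.

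However, there is a genuine gap in your final ``collapse.'' The horizontality condition for the $Ad_k Y_1$ family (equation \eqref{eqn:2.4}) has right-hand side $|k_{11}|^2 q_1 + |k_{21}|^2 q_2$, a convex combination of $q_1$ and $q_2$ — not a single $q_i$. As $A$ and $k$ vary, the left side fills out $[\underline{p},\overline{p}]$ while the right side fills out $[\min\{q_1,q_2\},\max\{q_1,q_2\}]$, so a solution exists whenever these two intervals intersect. In particular, if $q_1 < \underline{p}$ and $q_2 > \overline{p}$ (a ``straddling'' configuration, which is compatible with all $q_i\notin[\underline{p},\overline{p}]$, e.g.\ $p=(1,2,3)$, $q=(0,7,-1)$), the Eschenburg metric associated to that ordering of $(p,q)$ does admit a zero-curvature plane at every point. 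Your sketch, which expects the condition to collapse to ``some $q_i\in[\underline{p},\overline{p}]$,'' would misclassify this case. The resolution is that Theorem \ref{thm:esc} asserts the existence of \emph{some} Eschenburg metric — i.e.\ one is allowed to permute the $q_i$ (and swap $p$ with $q$, modifications which change the isometry type), and one must show that whenever all $q_i\notin[\underline{p},\overline{p}]$ there is a reordering placing $q_1,q_2$ on the \emph{same} side of $[\underline{p},\overline{p}]$ (which is always possible using $\sum p_i=\sum q_i$). Your proof as written never invokes this quantification over orderings, so the ``if'' direction would fail for the fixed metric in the straddling case.

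A secondary issue: you correctly flag that step one (classifying the zero-curvature planes) is the hardest, but you should also note that Proposition \ref{prop:esch} is only a \emph{necessary} condition (every zero-curvature plane contains $Y_3$ or some $Ad_k Y_1$), not a biconditional, and separately that a horizontal zero-curvature plane upstairs need not project to a zero-curvature plane unless the O'Neill $A$-tensor vanishes; you mention the $A$-tensor but do not indicate how to arrange its vanishing. The paper handles this delicately in the Note following Theorem \ref{thm:kerineqns} by exhibiting explicit commuting vectors in $\mathfrak{p}$ (drawing on \cite{Es3}), and your outline would need something analogous to be complete.
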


It is therefore natural to study the curvature properties of the remaining Eschenburg spaces.  This was begun in \cite{Ke1}, where Kerin found that $E_{p,q}$ for $p = (0,1,1)$ and $q = (0,0,2)$ is almost positively curved, but not positively curved.  Recall that a Riemannian manifold is said to be \textit{almost positively curved} if the set of points for which all $2$-planes are positively curved is open and dense.  Kerin also showed that with one exception, every Eschenburg space has at least one Eschenburg metric of quasi-positive curvature. We recall that a metric is called \textit{quasi-positively curved} if it is non-negatively curved and it has a point at which all $2$-planes are positively curved. 

Our first main result completely characterizes the nature of the curvature of every Eschenburg space with Eschenburg metric.  To describe our results, let $S_3$ denote the symmetric group on the set $\{1,2,3\}$.

\begin{theorem}\label{thm:main}  The curvature of $E_{p,q}$ with the Eschenburg metric is determined by the 6 products $(p_{\sigma(1)}-q_1)(p_{\sigma(2)}- q_2)$ for $\sigma \in S_3$ as follows:

\begin{enumerate}\item  If all 6 products are positive, $E_{p,q}$ is positively curved.

\item  If all $6$ products are non-negative, with at least one positive and one zero, $E_{p,q}$ is almost positively curved but not positively curved.

\item  If at least one product is positive and at least one is negative, then $E_{p,q}$ is quasi-positively curved but not almost positive curved.

\item  If all products are non-positive, then every point of $E_{p,q}$ has at least one zero-curvature plane.

\end{enumerate}

\end{theorem}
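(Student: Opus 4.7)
The plan is to analyze the zero-curvature planes of $E_{p,q}$ through the Riemannian submersion $\pi\colon (SU(3),g_{\mathrm{Esch}})\to (E_{p,q},g_{\mathrm{Esch}})$. By O'Neill's formula, a $2$-plane at $[A]$ has zero sectional curvature iff its horizontal lift at $A$ is a zero-curvature plane in $SU(3)$ and the O'Neill $A$-tensor vanishes on it. Hence the task splits into (i) classifying zero-curvature planes on $SU(3)$ and (ii) deciding, point by point, when such a plane admits a horizontal representative above $[A]$.

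First, I would classify the zero-curvature $2$-planes of $(SU(3),g_{\mathrm{Esch}})$. Since $g_{\mathrm{Esch}}$ is a Cheeger deformation of the bi-invariant metric via right multiplication by $U(2)$, the standard Cheeger-deformation analysis characterizes zero-curvature planes in terms of pairs $X,Y\in \mathfrak{su}(3)$, left-translated to $T_A SU(3)$, satisfying explicit commutation conditions in $\mathfrak{su}(3)$ and in $\mathfrak{u}(2)$. After imposing these conditions, the resulting planes organize into six families indexed naturally by the Weyl group $S_3$, each family corresponding to a pair of ``matched'' coordinate directions under a permutation $\sigma$.

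Second, I would determine which $\sigma$-families descend through $\pi$ at a given point $[A]$. Writing the fundamental Killing field of the $S^1$-action at $A$ as $\diag(ip_1,ip_2,ip_3)\,A - A\,\diag(iq_1,iq_2,iq_3)$ and transferring to the left-invariant frame via $A^{-1}$, the horizontality condition for a candidate zero-curvature plane in the $\sigma$-family reduces to a single linear equation whose two coefficients are exactly $p_{\sigma(1)}-q_1$ and $p_{\sigma(2)}-q_2$. A sign analysis then shows that the locus of $A$ at which the $\sigma$-family contributes a zero-curvature plane is: empty when $(p_{\sigma(1)}-q_1)(p_{\sigma(2)}-q_2)>0$; a proper closed nowhere-dense set when this product is $0$ with some factor nonzero; a proper open subset of $SU(3)$ when the product is $<0$; and all of $SU(3)$ when both factors vanish.

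Finally, combining the six $\sigma$-conditions yields the four cases of the theorem. If all six products are positive, no family contributes, proving (1). If all are non-negative with some zero, only a proper closed nowhere-dense subset carries a zero-curvature plane, so $E_{p,q}$ is almost positively curved but not positively curved, giving (2). If the products mix signs, a negative product produces an open set of points with zero-curvature planes while a positive one forces a point carrying none, giving (3). If all products are non-positive, every $[A]$ lies in the contributing locus of some family, so zero-curvature planes exist at every point, giving (4).

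The main obstacle is step two: setting up coordinates so that each $\sigma\in S_3$ produces exactly one of the six listed products and not some more complicated combination. The subtle point in case (2) is verifying that the contributing locus is truly \emph{nowhere dense}, so that almost positive curvature — not merely quasi-positive — follows; once the dictionary $\sigma\mapsto (p_{\sigma(1)}-q_1)(p_{\sigma(2)}-q_2)$ is established, the four cases become a combinatorial exercise in signs.
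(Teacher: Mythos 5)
Your plan hinges on a classification of zero-curvature planes in $(SU(3),\langle\cdot,\cdot\rangle_K)$ into ``six families indexed by $S_3$,'' each giving a horizontality condition that is a linear equation with coefficients $p_{\sigma(1)}-q_1$ and $p_{\sigma(2)}-q_2$. That is not what the classification looks like. Eschenburg's result (Proposition~\ref{prop:esch} in the paper) says a zero-curvature plane in $\mathfrak{g}$ must contain $Y_3=i\diag(1,1,-2)$, or $Ad_k Y_1$ for some $k\in U(2)$ with $Y_1=i\diag(-2,1,1)$. This is a two-case dichotomy, with the second case a \emph{continuous} family over $k\in U(2)$, not a finite family indexed by the Weyl group. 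Correspondingly, the actual horizontality conditions (Theorem~\ref{thm:kerineqns}) are $\sum_j |a_{j3}|^2 p_j = q_3$ and $\sum_j |(Ak)_{j1}|^2 p_j = |k_{11}|^2 q_1 + |k_{21}|^2 q_2$; these are quadratic in the entries of $A$ and $k$ and depend on $k$, not single linear equations in the $p_{\sigma(i)}-q_i$. Your ``dictionary $\sigma\mapsto(p_{\sigma(1)}-q_1)(p_{\sigma(2)}-q_2)$'' would have to be extracted from a sign analysis of a $k$-parametrized condition, and you haven't shown it can be so extracted; indeed the paper gets the six products only indirectly, combined with the isometries of Proposition~\ref{prop:nochange} that allow permuting $p$.

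Even granting the dictionary, the sign analysis you sketch doesn't deliver case (2). Saying ``only a proper closed nowhere-dense subset carries a zero-curvature plane'' is the desired conclusion, not an argument; nowhere-density of the union of contributing loci would have to be proved, and the continuous $k$-dependence makes this delicate. The paper sidesteps this entirely: it proves (Proposition~\ref{prop:KerinExample}) that the hypotheses of case (2) force $(p,q)$ to be $((0,1,1),(0,0,2))$ up to isometry, and then invokes Kerin's theorem for that one specific space. Likewise, the quasi-positivity half of case (3) in the paper is Kerin's Theorem~\ref{thm:quasi} (all planes at $[A]$ for diagonal $A$ are positively curved), not a byproduct of a per-family emptiness claim; and case (4) requires a genuine case split (Proposition~\ref{prop:negcases}) into three shapes of $(p,q)$, each handled by exhibiting explicit $k$'s. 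So the proposal's architecture — six $\sigma$-families, each with a linear horizontality condition, with the theorem following by a combinatorial overlay — does not match the geometry and does not close the hardest steps. You should instead start from Eschenburg's two-vector characterization and Kerin's horizontality equations, and organize the cases as the paper does.
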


In case 1, the fact that all six products are positive implies that both $q_1,q_2< \underline{p}$ or both $q_1,q_2> \overline{p}$.  The fact that $\sum p_i = \sum q_i$ now implies that $q_3$ also does not lie in $[\underline{p},\overline{p}]$.  Hence, Theorem \ref{thm:esc} may be viewed as a special case of  Theorem \ref{thm:main}.  Like case 1, case 3 and 4 each comprise infinitely many examples.

On the other hand, in case 2, the hypothesis on the $6$ products implies that $(p_1,p_2,p_3)$ is a permutation of $(0,1,1)$ while $(q_1,q_2,q_3) = (0,0,2)$, see Proposition \ref{prop:KerinExample}.  Kerin \cite[Theorem 2.4]{Ke1} has already shown the Eschenburg metric is almost positively curved but not positively curved in this case.

\bigskip

%As noted above, Kerin showed that with one exception, every Eschenburg space is diffeomorphic to one whose Eschenburg metric is quasi-positively curved.  The exceptional Eschenburg space is the Aloff-Wallach space $E_{p,q}$ with $p = (0,0,0)$ and $q=(-1, 0,1)$.  

The case of homogeneous and cohomogeneity-one Eschenburg metrics on Eschenburg spaces is completely understood.  All homogeneous Eschenburg spaces (i.e., Aloff-Wallach spaces) admit a homogeneous Eschenburg metric of positive sectional curvature, except for $E_{p,q}$ with $p = (0,0,0)$ and $q= (1,-1,0)$ \cite{AW}.  All cohomogeneity one Eschenburg spaces admit a cohomogeneity one Eschenburg metric of positive sectional curvature, except for $E_{p,q}$ with $p = (0,0,2)$ and $q = (1,1,0)$ \cite{Wu}, which admits a cohomogeneity one Eschenburg metric of almost positive curvature.

In \cite{Wi}, Wilking equipped $SU(3)$ with a different metric, which we will refer to as the \textit{Wilking metric} and showed that it induces almost positive curvature on the above exceptional homogeneous Eschenburg space.  This illustrates a general principle:  Wilking's metric construction tends to have fewer isometries but also fewer zero-curvature planes.  Thus, in the search for new examples with (almost) positive curvature, it is natural to look at cohomogeneity 2 Eschenburg spaces equipped with the Wilking metric.

It turns out (see Proposition \ref{prop:nochange}), that up to isometry, the examples whose ``natural'' isometry group (in the sense of \cite{GroveShankarZiller}) acts on $E_{p,q}$ with cohomogeneity at most $2$ are of the form $p = (0,0,q_1+q_2+q_3)$, $q = (q_1,q_2,q_3)$ with $q_i$ pairwise relatively prime, $q_1 + q_2 + q_3\geq 0$, and $q_1\geq q_2$.  Our next main theorem describes the curvature of Wilking's metric on many of these examples.

\begin{theorem}\label{thm:almpos}  Suppose $q_1, q_2,$ and $q_3$ are pairwise relatively prime integers with $q_1 + q_2 + q_3 > 0$, $q_1 > q_2$, $q_1 > 0$, and $q_2 q_3 <0$.  Set $p = (0,0,q_1 + q_2 + q_3)$ and $q = (q_1, q_2 , q_3)$.  Then, under any of the following three hypothesis, Wilking's metric on $E_{p,q}$ is not almost positively curved.

\begin{enumerate}\item $q_2 + q_3\geq 0$

\item  $q_2 < 0$ and $q_1+q_2 \geq 0$

\item  $q_3 < 0$ and $q_1+ q_3 > 0$

\end{enumerate}
\end{theorem}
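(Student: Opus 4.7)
The approach is to realize $E_{p,q}$ with Wilking's metric as the base of a double Riemannian submersion from a bi-invariantly metrized Lie group, and then build zero-curvature planes out of commuting horizontal pairs. Recall that Wilking's metric on $SU(3)$ is the quotient of $(SU(3)\times SU(2), g_{\mathrm{bi}})$ by the free $SU(2)$-action $k\cdot(A,B)=(A\iota(k^{-1}),kB)$, where $\iota:SU(2)\hookrightarrow SU(3)$ is the standard block embedding. Lifting the Eschenburg $S^1$-action to act only on the first factor, one obtains a free isometric action of $H=S^1\times SU(2)$ on $(SU(3)\times SU(2), g_{\mathrm{bi}})$ whose quotient, via a Riemannian submersion $\pi$, is $E_{p,q}$ with Wilking's metric.

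A standard consequence of the O'Neill formula for biquotients of bi-invariantly metrized Lie groups is that a horizontal plane at $(A,B)$ spanned by left-invariant vectors $X,Y\in\mathfrak{su}(3)\oplus\mathfrak{su}(2)$ projects to a zero-curvature plane at $\pi(A,B)$ whenever $[X,Y]=0$ in the Lie algebra: the bi-invariant curvature $\tfrac14\|[X,Y]\|^2$ vanishes, and the vertical projection of $[X,Y]$, which controls the $A$-tensor contribution, also vanishes. Consequently, it suffices in each of the three cases to exhibit an open subset of $SU(3)\times SU(2)$ on which such a commuting horizontal pair exists.

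The horizontality condition at $(A,B)$ for $(X_1,X_2)\in\mathfrak{su}(3)\oplus\mathfrak{su}(2)$ amounts to
\[
\langle X_1,\,\mathrm{Ad}(A^{-1})P_\theta - \iota(Z) - Q_\theta\rangle + \langle X_2,\,\mathrm{Ad}(B^{-1})Z\rangle = 0
\]
for every $(\theta, Z)\in i\mathbb{R}\oplus\mathfrak{su}(2)$, where $P_\theta = i\theta\,\mathrm{diag}(p_1,p_2,p_3)$ and $Q_\theta = i\theta\,\mathrm{diag}(q_1,q_2,q_3)$. Since $p=(0,0,q_1+q_2+q_3)$, the term $\mathrm{Ad}(A^{-1})P_\theta$ is rank one, which substantially simplifies the condition. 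I would proceed in each case by an ansatz: choose the $\mathfrak{su}(3)$-components of $X$ and $Y$ so that the commuting condition forces them into a common Cartan, so that orthogonality to $\iota(\mathfrak{su}(2))$ holds for free, and so that the residual scalar equation $\langle \mathrm{Ad}(A)X_1, P\rangle = \langle X_1, Q\rangle$ (and its $Y$-analogue) carves out a nonempty open locus of $A$. The sign hypotheses on $q_2+q_3$, $q_1+q_2$, and $q_1+q_3$ in the three cases are precisely the conditions under which the relevant diagonal pattern on $\mathrm{Ad}(A^{-1})P_\theta$ matches $Q_\theta$ on an open set, e.g., by expressing the inequality as a solvability criterion for a linear system in the entries of $A$ restricted to the rank-one image of $P_\theta$.

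The main obstacle is devising the correct ansatz case by case and verifying the openness of the solution locus. Each case likely calls for a slightly different commuting pair, perhaps involving different conjugates of the standard Cartan of $\mathfrak{su}(3)$ together with a compensating $\mathfrak{su}(2)$-component from $X_2,Y_2$ to provide extra degrees of freedom. I expect case (3), with its strict inequality $q_1+q_3>0$, to be the most delicate: the open region of valid $A$ shrinks as one approaches the boundary $q_1+q_3=0$, and one must verify the ansatz remains robust up to (but not past) that boundary.
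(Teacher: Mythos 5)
Your proposal has a critical error at the outset: you misidentify Wilking's metric. You describe it as the quotient of $(SU(3)\times SU(2),\,g_{\mathrm{bi}})$ by a single copy of $SU(2)$ acting by $k\cdot(A,B)=(A\iota(k^{-1}),kB)$. That construction is a \emph{Cheeger deformation} of the bi-invariant metric in the direction of a subgroup; in this paper that is precisely the \emph{Eschenburg} metric (with $K=U(2)$, not $SU(2)$). Wilking's metric here is obtained by the \emph{doubling trick}: equip both factors of $SU(3)\times SU(3)$ with the Eschenburg (already Cheeger-deformed) metric, quotient by $\Delta SU(3)$ on the left, and then by the $S^1$ on the right. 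If you unroll that to a bi-invariant total space you need four factors, not two, and the vertical space and horizontality condition you write down are therefore wrong: the correct horizontality condition (Kerin's formula used in Proposition \ref{prop:eqns}) involves the pair $(-\phi^{-1}(\mathrm{Ad}_{A^{-1}}X),\ \phi^{-1}(X))$ with the Cheeger operator $\phi$, not orthogonality in the bi-invariant metric to a single $\mathrm{Ad}(A^{-1})P_\theta - \iota(Z) - Q_\theta$. A zero-curvature analysis carried out for the metric you describe would establish the theorem for the Eschenburg metric, not the Wilking metric, and those two statements are genuinely different (the paper's Theorem \ref{thm:main} already handles the Eschenburg case).

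Beyond the wrong starting point, the proposal is a plan rather than a proof. You correctly identify the general strategy — find commuting horizontal pairs on an open locus and invoke Tapp's theorem / O'Neill — and this matches the paper's high-level strategy. But all of the genuine content is deferred: you do not exhibit the commuting pair, the scalar horizontality equations it must satisfy, or the nonempty open locus. In the paper this is exactly where the work lies: one must produce the explicit vector $Z\in\mathcal{Z}$ and the parameter $\beta$ in terms of $\theta$, verify Conditions A, B, and C of Proposition \ref{prop:eqns}, package the solvability into the two scalar functions $h(\theta)$ and $g(\theta)$, prove the recognition criterion (Theorem \ref{thm:recog}) that $0<h(\theta_0)<1$ and $0<g(\theta_0)<1$ forces a nonempty open set of flat planes, and then verify these bounds case by case (at $\theta=\pi/2$ for case 2c, at $\cos^2\theta_0 = -q_3(q_2+q_3)/((q_2-q_3)p_3)$ for case 2a, at $\cos^2\theta_0 = q_3/p_3$ for case 2b). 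None of this is present in your proposal; ``devising the correct ansatz case by case'' is not a substitute for devising it. As written, the argument does not establish the theorem.
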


While the numerous hypothesis on the $q_i$ in this theorem may seem arbitrary, it turns out these conditions correspond exactly to the complement of the known examples, see Proposition \ref{prop:choices}.

We now outline the rest of the paper.  Section \ref{sec:background} contains background information on Eschenburg spaces, including the construction of both the Eschenburg and Wilking metrics.  Section \ref{sec:ecurvature} contains the proof of Theorem \ref{thm:main}.  Section \ref{sec:wcurvature} contains the proof of Theorem \ref{thm:almpos}.  

\textbf{Acknowledgements}  Both authors were supported by NSF DMS 2105556.  They are grateful for the support.  In addition, both authors gratefully acknowledge support from the Bill and Roberta Blankenship Undergraduate Research endowment.  We would also like to thank two anonymous referees who comments have vastly improved and corrected the article.

\section{Background}\label{sec:background}

In this section, we cover the necessary background, beginning with definition of Eschenburg spaces and ending with a description of the two Riemannian metrics we will be considering.

\subsection{Eschenburg spaces}
In this section, we cover the construction of Eschenburg spaces.

Suppose $p = (p_1,p_2,p_3)$ and $q = (q_1,q_2,q_3)$ are each triples of integers with $\sum p_i = \sum q_i$. Then we obtain an action of the group $S^1\subseteq \mathbb{C}$ on $$SU(3) = \{A\in M_3(\mathbb{C}):A\overline{A}^t = I, \det(A) = 1\}$$ defined by $$z\ast A = \diag(z^{p_1}, z^{p_2}, z^{p_3}) A \diag(z^{q_1}, z^{q_2}, z^{q_3})^{-1}.$$

We will call a pair of triples $(p,q)$ \textit{admissible} if $$\sum p_i = \sum q_i \text{ and } \gcd(p_{\sigma(1)}-q_1, p_{\sigma(2)}-q_2) = 1 \text{ for all  }\sigma \in S_3,$$ the permutation group on $\{1,2,3\}$.  We observe that the relation $\sum p_i = \sum q_i$ implies that analogous $\gcd$s involving $q_3$ are automatically equal to $1$ if $(p,q)$ is admissible.  The importance of this definition is highlighted by the following proposition.  See, e.g., \cite[Proposition 21]{Es} for a proof.

\begin{proposition}  Consider the $S^1$ action on $SU(3)$ defined by $p = (p_1,p_2,p_3)$ and $q=(q_1,q_2,q_3)$ as above.  Then the action is free (and hence, the quotient is a manifold) if and only if $((p_1,p_2,p_3),(q_1,q_2,q_3))$ is admissible.
\end{proposition}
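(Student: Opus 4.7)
The plan is to reduce freeness of the $S^1$-action to an entrywise arithmetic condition. Writing $A=(a_{ij})$, the fixed-point equation $z\ast A = A$ unpacks to $z^{p_i}a_{ij} = z^{q_j}a_{ij}$ for every pair $(i,j)$, so for each entry either $a_{ij}=0$ or $z^{p_i-q_j}=1$. Thus understanding the stabilizer of a point amounts to understanding which support patterns for $A$ are compatible with a given $z$.

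For the direction ``admissible $\Rightarrow$ free'', I assume $z\ast A = A$ with $A\in SU(3)$. Because $\det A\neq 0$, the Leibniz expansion of the determinant forces some permutation $\sigma\in S_3$ with $a_{\sigma(j),j}\neq 0$ for every $j=1,2,3$, and the entrywise analysis above then yields $z^{p_{\sigma(j)}-q_j}=1$ for each such $j$. Applying Bezout's identity to the admissibility condition $\gcd(p_{\sigma(1)}-q_1,\, p_{\sigma(2)}-q_2)=1$ produces integers $m,n$ with $m(p_{\sigma(1)}-q_1)+n(p_{\sigma(2)}-q_2)=1$, so $z = (z^{p_{\sigma(1)}-q_1})^m(z^{p_{\sigma(2)}-q_2})^n = 1$ and the stabilizer is trivial.

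For the converse, I assume $(p,q)$ is not admissible, so some $\sigma\in S_3$ admits an integer $d>1$ dividing both $p_{\sigma(1)}-q_1$ and $p_{\sigma(2)}-q_2$. The relation $\sum p_i=\sum q_i$ gives $\sum_j(p_{\sigma(j)}-q_j)=0$, so $d$ also divides $p_{\sigma(3)}-q_3$. Choose $z$ to be a primitive $d$-th root of unity, and let $A$ be the permutation matrix of $\sigma$ with the sign of a single entry flipped if necessary to make $\det A = 1$. Then $A\in SU(3)$ has non-zero entries exactly at positions $(\sigma(j),j)$, and by construction $z^{p_{\sigma(j)}-q_j}=1$ for each $j$, so $z\ast A = A$ with $z\neq 1$, exhibiting a non-trivial stabilizer.

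The main obstacle is purely notational: carefully matching the permutation $\sigma$ produced by the Leibniz expansion with the permutation appearing in the admissibility hypothesis, and noting as a preliminary remark (already observed in the paper) that because the three differences $p_{\sigma(j)}-q_j$ sum to zero, pairwise $\gcd$ conditions coincide with the overall $\gcd$ being $1$. Once these small arithmetic points are in place, each direction collapses to essentially a one-line application of Bezout's identity or a one-line construction of a signed permutation matrix.
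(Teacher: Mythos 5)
Your proof is correct. Note that the paper itself does not include a proof of this proposition — it simply cites \cite[Proposition 21]{Es} — so there is no in-text argument to compare against, but your reduction (fixed points force a permutation $\sigma$ with $a_{\sigma(j),j}\neq 0$ via nonvanishing of $\det A$, then B\'ezout on the admissibility $\gcd$, with a signed permutation matrix exhibiting a nontrivial stabilizer in the converse) is the standard argument and would serve as a self-contained substitute for the citation.
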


As mentioned in the introduction, Eschenburg spaces were introduced by Eschenburg \cite{Es}, where he showed an infinite family of them admit Riemannian metrics of positive sectional curvature.  

Some operations on $(p,q)$ do not alter the diffeomorphism type of the resulting Eschenburg spaces.  Once we have defined the Eschenburg and Wilking metrics, we will see that some of these operations do alter the isometry type while others do not.  We record some of these operations below.  For a triple $p = (p_1,p_2,p_3)$ and $m\in \mathbb{Z}$, the notation $p+m$ will refer to $(p_1+ m, p_2+m, p_3 + m)$.  For $\sigma \in S_3$, the symmetric group on $\{1,2,3\}$, the notation $p_\sigma$ will refer to $(p_{\sigma(1)}, p_{\sigma(2)}, p_{\sigma(3)}).$  Lastly, the notation $-p$ refers to $(-p_1,-p_2,-p_3)$.

\begin{proposition}\label{prop:nochange}Suppose $(p,q)$ is admissible. 
 Then the following modifications of $(p,q)$ result in diffeomorphic Eschenburg spaces.  \begin{enumerate} \item $(p',q') =(p+m,q+m)$ for $m\in \mathbb{Z}$

 \item  $(p',q') = (-p, -q)$

\item $(p', q') = (p_\sigma, q)$ where $\sigma(3) = 3$ 

\item $(p',q') = (p_\sigma,q)$ where $\sigma(3)\neq 3$

\item $(p',q') = (p,q_\sigma)$ where $\sigma(3) = 3$

\item $(p',q') = (p,q_\sigma)$ where $\sigma(3)\neq 3$

\item  $(p',q') = (q,p)$

\end{enumerate}
With respect to the Eschenburg metric, modifications 1,2,3,4, and 5 determine isometric Eschenburg spaces.  With respect to the Wilking metric, modifications 1,2,3,5, and 7 determine isometric Eschenburg spaces.

\end{proposition}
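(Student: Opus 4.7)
The plan is to construct, for each modification, an $S^1$-equivariant diffeomorphism $\Phi:SU(3)\to SU(3)$ intertwining the old action $\ast$ with the new action $\ast'$; passing to quotients yields the diffeomorphism of Eschenburg spaces, and each isometry assertion is then verified by checking that $\Phi$ respects the left/right invariance (or, for Wilking, inversion invariance) of the corresponding metric recorded in Section~\ref{sec:background}.

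Cases~1 and~2 are essentially trivial. In case~1 the scalar factor $z^m I$ is central in $SU(3)$, so the $z^m I$ from the left and $(z^m I)^{-1}$ from the right cancel, making $\ast=\ast'$ as maps $SU(3)\to SU(3)$. In case~2 the action of $z$ under $\ast'$ equals the action of $z^{-1}$ under $\ast$, so the two $S^1$-orbit foliations agree. In both cases $\Phi=\mathrm{id}$ suffices and is trivially an isometry of any metric.

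For cases~3 and~4, choose a signed permutation matrix $P_\sigma\in SU(3)$ (adjust one diagonal entry by $-1$ so that $\det P_\sigma=1$) satisfying $P_\sigma\diag(z^{p_1},z^{p_2},z^{p_3})P_\sigma^{-1}=\diag(z^{p_{\sigma(1)}},z^{p_{\sigma(2)}},z^{p_{\sigma(3)}})$. Then $\Phi(A)=P_\sigma^{-1}A$ is $S^1$-equivariant, proving the diffeomorphism claim in both cases. Since $\Phi$ is a left translation, it is an isometry of the left-$SU(3)$-invariant Eschenburg metric for every $\sigma$, handling cases~3 and~4 there. For the Wilking metric, which is only left-invariant under the upper-block $U(2)\subset SU(3)$, $\Phi$ is an isometry if and only if $P_\sigma\in U(2)$, i.e.\ if and only if $\sigma(3)=3$, recovering case~3 but not case~4. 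Cases~5 and~6 are symmetric: the map $\Phi(A)=AP_\sigma$ is an equivariant diffeomorphism, and since both metrics are right-invariant under the same block $U(2)$, $\Phi$ is an isometry exactly when $\sigma(3)=3$, placing case~5 in both lists and excluding case~6 from both. Finally, for case~7 take $\Phi(A)=A^{-1}$; the computation $(z\ast A)^{-1}=\diag(z^{q_i})A^{-1}\diag(z^{p_i})^{-1}=z\ast'\Phi(A)$ shows equivariance. One verifies from the Section~\ref{sec:background} construction that the Wilking metric is invariant under inversion, whereas the (non-bi-invariant) Eschenburg metric is not, explaining why case~7 appears only in the Wilking list.

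The main obstacle lies in the metric-specific isometry claims, especially the inversion-invariance of the Wilking metric and the precise left/right invariance of both metrics; these rely on the explicit definitions to be recorded in Section~\ref{sec:background}. The equivariant intertwining computations themselves are short algebraic checks.
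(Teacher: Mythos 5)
Your argument is essentially identical to the paper's: one constructs an $S^1$-equivariant diffeomorphism of $SU(3)$ (trivially for cases~1--2, by left/right translation by a sign-corrected permutation matrix in $SU(3)$ for cases~3--6, and by inversion for case~7), descends it to the quotient, and then checks which of these transformations respect the left-$SU(3)$, right-$U(2)$, and (for Wilking) factor-swap invariances of the two metrics. The only slip is a harmless inverse mismatch: with your stated convention $P_\sigma\diag(z^{p_i})P_\sigma^{-1}=\diag(z^{p_{\sigma(i)}})$ the intertwiner must be $A\mapsto P_\sigma A$ in cases~3--4 and $A\mapsto AP_\sigma^{-1}$ in cases~5--6 (not $P_\sigma^{-1}A$ and $AP_\sigma$); this does not affect any isometry conclusion since $P_\sigma\in U(2)$ exactly when $P_\sigma^{-1}\in U(2)$, namely when $\sigma(3)=3$.
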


Since we have not yet precisely defined the Eschenburg and Wilking metrics, we will postpone the proof to Section \ref{sec:Wmetric}.

As mentioned in the introduction, apart from the positively curved examples, only two other Eschenburg spaces are known to admit almost positively curved Riemannian metrics: when $(p,q) = ((0,1,1), (0,0,2))$ and when $(p,q) = ((0,0,0), (1,0,-1))$.

\begin{definition} Suppose $(p,q)$ is admissible.  Then $(p,q)$ is \textit{exceptional} if it is related to either $((0,1,1),(0,0,2))$ or $((0,0,0),(1,0,-1))$ by a combination of the modifications in Proposition \ref{prop:nochange}.  \end{definition}

The case where some $q_i$ is equal to some $p_j$ will arise frequently, so it is prudent to collect some facts about this case.

\begin{proposition}\label{prop:equalpq}  Suppose $(p,q)$ is admissible with $p_1 = q_1$.  Then, up to the changes in Proposition \ref{prop:nochange}, $p$ has the form $(a,0,0)$ and $q$ has the form $(a,-1, 1)$ for some non-negative integer $a$. If, in addition, $p_1 = q_2$, then $(p,q)$ is exceptional.
\end{proposition}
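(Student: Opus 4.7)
The plan is to first normalize $(p,q)$ with the easy moves of Proposition \ref{prop:nochange}, then extract everything from admissibility. Using modification (1), subtract $p_1$ from both triples so that $p_1=q_1=0$. The six admissibility conditions $\gcd(p_{\sigma(1)}-q_1,\,p_{\sigma(2)}-q_2)=1$ specialize, for $\sigma=\mathrm{id}$ and $\sigma=(2\,3)$, to $|p_2-q_2|=1$ and $|p_3-q_2|=1$; the remaining four $\gcd$ conditions are then automatic. So $p_2,p_3\in\{q_2-1,\,q_2+1\}$.

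Next, I split on the signs of $p_2-q_2$ and $p_3-q_2$. If the signs agree, then $p_2=p_3$; writing $b:=p_2$ and using $\sum p_i=\sum q_i$, the triple $q$ must be $(0,b-1,b+1)$ up to swapping $q_2,q_3$ by modification (5). If the signs disagree, then $p_2-p_3=\pm 2$ and the sum relation instead yields $q_2=q_3$; swapping the roles of $p$ and $q$ via modification (7) returns us to the previous case. Hence, after these moves, $p=(0,b,b)$ and $q=(0,b-1,b+1)$ for some $b\in\mathbb{Z}$. Now shift by $-b$ with modification (1) to obtain $p=(-b,0,0)$, $q=(-b,-1,1)$. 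If $b>0$, I first apply modification (2) (negate) and modification (5) (swap $q_2,q_3$), which replaces $b$ by $-b$; the subsequent shift then gives $a:=|b|\geq 0$, yielding the stated canonical form.

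For the second claim, the extra hypothesis $p_1=q_2$ together with the shift normalization forces $q_2=0$, so $q=(0,0,p_2+p_3)$ with $p_2,p_3\in\{\pm 1\}$. If $p_2=p_3$, then after possibly negating both triples via modification (2) the pair becomes $((0,1,1),(0,0,2))$. If $p_2=-p_3$, then (after possibly swapping $p_2,p_3$ by modification (3)) the pair is $((0,1,-1),(0,0,0))$; swapping $p$ and $q$ via modification (7) and then permuting $q$ turns it into $((0,0,0),(1,0,-1))$. In every subcase $(p,q)$ is exceptional. The only real obstacle is bookkeeping — keeping track of which combination of modifications is needed to achieve $a\geq 0$ and to reach the two canonical exceptional triples — but no genuine difficulty arises beyond the gcd constraints of admissibility.
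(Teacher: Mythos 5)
Your proof is correct and follows essentially the same route as the paper's: normalize so that $p_1 = q_1$ becomes $0$, extract from admissibility that $|p_2 - q_2| = |p_3 - q_2| = 1$, and then split into the case $p_2 = p_3$ versus the case $q_2 = q_3$ (your sign dichotomy is equivalent to the paper's dichotomy of whether $q_2, q_3$ differ by $2$ or are equal), using modification (7) to pass between them. The only difference is cosmetic: the paper derives the full set of four equalities $|p_i - q_j| = 1$ for $i,j\in\{2,3\}$ via its earlier remark about gcds involving $q_3$, whereas you use only the two that come directly from $\sigma=\mathrm{id}$ and $\sigma=(2\,3)$ and let the sum relation do the rest, which is a small tidy-up but changes nothing of substance.
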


\begin{proof}Since $(p,q)$ is admissible, for any $i,j\in \{2,3\}$, we have $$1 = \gcd(p_1-q_1, p_i - q_j) = |p_i - q_j|.$$  Since both $q_2,q_3$ differ from $p_2$ by $1$, either $q_2$ and $q_3$ differ by $2$, or $q_2 = q_3$.

Assume initially that $q_2$ and $q_3$ differ by $2$.  Since $p_i$ differs from both $q_2$ and $q_3$ by $1$, we must have $p_2 = p_3$.  Using Proposition \ref{prop:nochange}(1) with $t=-p_2$, $p$ now has the form $(a,0,0)$.  Since $q_2 = p_2 \pm 1$ and $q_3 = p_2 \mp 1$, $q = (a,-1,1)$ up to order.  Proposition \ref{prop:nochange}(2) then allows us to assume $a\geq 0$.

Thus, we may assume $q_2 = q_3$ and that both $p_2$ and $p_3$ differ from $q_2$ by $1$.  Again, via Proposition \ref{prop:nochange}(1), we may assume that $q_2 = 0$, so $p_2,p_3\in \{\pm 1\}$.    As $q_2 = 0$, $q$ now has the form $(a,0,0)$, where, as above, we may assume $a\geq 0$.  If $p_2 = p_3$, then we have the contradiction $q_1 = \sum q_i = \sum p_i = q_1 \pm 2.$  Thus $p_2\neq p_3$.  Since $p_2, p_3\in \{\pm 1\}$, we again obtain the form of the proposition.  This completes the proof of the first assertion.

For the second assertion, we assume that $p_1 = q_1 = q_2$.  Subtracting $p_1$ (Proposition \ref{prop:nochange}(1) ) from each entry of $p$ and $q$, we obtain the triples $p = (0,p_2-p_1, p_3-p_1)$ and $q = (0,0, q_3-p_1)$.  We assume $p_2 - p_1 \geq 0$ via Proposition \ref{prop:nochange}(2).  Admissibility now implies that $p_2 - p_1 = |p_3 - p_1| = 1$.  If $p_3 - p_1  = 1$, then the fact that $\sum p_i = \sum q_i$ gives $p = (0,1,1)$ and $q = (0,0,2)$.  On the other hand, if $p_3 - p_1 = -1$, then $p = (0,1,-1)$ and $q = (0,0,0)$.  Thus, in both cases, $(p,q)$ is exceptional, completing the proof.  % we have exceptional assume first that $p$ has the form $(a, 1,-1 )$ while $q$ has the form $(a,0,0)$.  Then the condition $p_1 = q_2$ is simply asserting that $a=0$.  On the other hand, assuming $p$ has the form $(a,0,0)$ and $q = (a,\pm 1, \mp 1)$, the condition $p_1 = q_2$, together with the fact that $a\geq 0$, implies that $p = (1,0,0)$ and $q = (1,   1, - 1)$. 
 %Using Proposition \ref{prop:nochange}(1) with $t= - 1$ and then using Proposition \ref{prop:nochange}(2), we thus obtain $(p,q) = ((0,1,1), ((0,0,2))$, completing the proof.

\end{proof}

\subsection{The Eschenburg metric}\label{sec:Emetric}
In this section, we outline the construction of the metric.

Suppose $G$ is a compact Lie group with bi-invariant metric $\langle \cdot , \cdot \rangle_0$.   Given any closed subgroup $K\subseteq G$, we perform a Cheeger deformation \cite{Ch1} of $\langle \cdot, \cdot \rangle_0$ in the direction of $K$.  Specifically, we can equip $G\times K$ with the metric $\langle \cdot, \cdot\rangle_0 + t\langle \cdot , \cdot \rangle_0|_{K}$ where $t\in (0,\infty)$ is some fixed parameter.  Then $K$ acts freely and isometrically via $k\ast( g_1, k_1) = (g_1 k^{-1}, k k_1)$, and thus one obtains a metric on the orbit space $G\times_K K$.  As bi-invariant metrics on Lie groups are well-known to have non-negative sectional curvature, O'Neill's formula \cite{On1} implies the metric on $G\times_K K$ is also non-negatively curved.

The map $(g_1,k_1)\mapsto g_1 k_1$ descends to a diffeomorphism $G\times_K K\cong G$.  Using the diffeomorphism to transfer the submersion metric on $G\times_K K$ to $G$, we obtain a new metric $\langle \cdot, \cdot \rangle_K$ on $G$, the Cheeger deformation of $\langle \cdot, \cdot\rangle_0$ in the direction of $K$.

The $G\times K$ action on itself given by $(g,k)\ast (g_1,k_1) = (gg_1, k_1 k^{-1})$ commutes with the previously described $K$ action, so induces an isometric $G\times K$ action on $(G, \langle \cdot, \cdot\rangle_K)$.  One can easily verify the corresponding $G\times K$ action on $G$ is nothing but $(g,k)\ast g_1 = g g_1 k_1^{-1}$.  In particular, $\langle \cdot, \cdot \rangle_K$ is left $G$-invariant and right $K$-invariant.

Since $\langle \cdot,\cdot\rangle_K$ is left invariant, it is defined by its value at the tangent space $T_e G$ to the identity element $e\in G$.  We identify $T_e G$ with the Lie algebra $\mathfrak{g}$ of $G$ in the usual fashion.  We use the notation $\mathfrak{k}\subseteq \mathfrak{g}$ to denote the Lie algebra of $\mathfrak{k}$, and we let $\mathfrak{p}$ denote the $\langle\cdot,\cdot\rangle_0$-orthogonal complement of $\mathfrak{k}$ in $\mathfrak{g}$.  Defining $\phi_K:\mathfrak{g}\rightarrow \mathfrak{g}$ by $\phi_K(X_\mathfrak{k} + X_\mathfrak{p}) = \frac{t}{t+1} X_{\mathfrak{k}} + X_\mathfrak{p}$, it is straightforward to verify that $\langle X,Y\rangle_K = \langle X, \phi_K(Y)\rangle_0$ for all $X,Y\in \mathfrak{g}$.  It is also straightforward to verify that the horizontal lift of a vector $\phi_K^{-1}(X)\in \mathfrak{g} = T_e G$ to $T_{(e,e)}(G\times K)\cong \mathfrak{g}\oplus \mathfrak{k}$ is given by $(X, \frac{1}{t} X_\mathfrak{k})$.

Using $Tr(X)$ to denote the trace of a square matrix $X$, we can now define the Eschenburg metric. 
 %An Eschenburg metric is a special case of the above construction using the bi-invariant metric $\langle X,Y\rangle_0 = -Tr(XY)$ on $SU(3)$.  Here, $Tr$ denotes the trace.  More precisely, we have the following definition.

\begin{definition}\label{def:esch}  The \textit{Eschenburg metric on $SU(3)$} is the metric obtained from the above construction with $G = SU(3)$, $K = U(2)$  embedded via $A\mapsto \diag(A,\overline{\det(A)})$, $t=1$, and $\langle X,Y\rangle_0 = -Tr(XY)$.%  by Cheeger deforming the $\langle X,Y\ranglethe bi-invariant metric whose value at $T_{I}SU(3)$ is $X,Y\in \mathfrak{g} \mapsto -Tr(XY)$ in the direction of $K = U(2)$, embedded via $A\mapsto \diag(A,\overline{\det(A)})$.

The \textit{Eschenburg metric on an Eschenburg space $E_{p,q}$} is the metric obtained as a submersion metric induced from the Eschenburg metric on $SU(3)$.%If $SU(3)$ is equipped with an Eschenburg metric, the \textit{Eschenburg metric} on a free isometric quotient of $SU(3)$ is the unique metric on the quotient making the natural projection map a Riemannian submersion.

\end{definition}

\begin{remark}  In the above definition, we set $t=1$ for definiteness.  While we state Proposition \ref{prop:esch} and Theorem \ref{thm:kerineqns} below in terms of the Eschenburg metric as we have defined it, they are actually valid for any $t \in (0,\infty)$.  In particular, allowing a different choice of $t$ in Definition \ref{def:esch} has no effect on Theorems \ref{thm:main} and \ref{thm:almpos}.

\end{remark}

The $S^1$ actions on $SU(3)$ defined in Section \ref{sec:background} act via left and right multiplication by elements of $U(3)$, so are not obviously isometric with respect to the Eschenburg metric.  The next proposition indicates that they are nevertheless isometric.

\begin{proposition}\label{prop:stillisometric}  Suppose $(p,q)$ is admissible.  Then the $S^1$ action on $SU(3)$ given by $$ z\ast A = \diag(z^{p_1}, z^{p_2}, z^{p_3}) A \diag(z^{q_1}, z^{q_2}, z^{q_3})^{-1}$$ is isometric with respect to the Eschenburg metric.

\end{proposition}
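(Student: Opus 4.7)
The plan is to factor the given $z$-action through an element of $SU(3)$ on the left and an element of the embedded $U(2)$ on the right; each of these is an isometry of the Eschenburg metric by construction (Definition \ref{def:esch} together with the $G\times K$-invariance noted above), and so their composition must be as well.

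Write $L_z := \diag(z^{p_1}, z^{p_2}, z^{p_3})$ and $R_z := \diag(z^{q_1}, z^{q_2}, z^{q_3})$, and set $s := \sum p_i = \sum q_i$. The only obstruction to an immediate proof is that $\det L_z = \det R_z = z^s$, so in general neither $L_z\in SU(3)$ nor is $R_z$ in the embedded $U(2)$ (whose elements have the form $\diag(B,\overline{\det B})$). My proposal is to absorb this discrepancy into a common scalar. Namely, for each $z\in S^1$ pick any $c = c_z \in S^1$ with $c^3 = z^s$, and define
\[
\tilde L_z := c^{-1} L_z, \qquad \tilde R_z := c^{-1} R_z.
\]

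The key steps I would then carry out are: (i) check $\det \tilde L_z = c^{-3} z^s = 1$, so $\tilde L_z \in SU(3)$; (ii) check that the bottom-right entry of $\tilde R_z$ equals the conjugate of the determinant of its upper-left $2\times 2$ block, i.e.\ $c^{-1} z^{q_3} = \overline{c^{-2} z^{q_1+q_2}} = c^{2} z^{-(q_1+q_2)}$, which reduces to the same identity $c^3 = z^{q_1+q_2+q_3} = z^s$; and (iii) observe that, because $c$ is a scalar and hence commutes with $A$,
\[
\tilde L_z\, A\, \tilde R_z^{-1} \;=\; c^{-1} L_z \cdot A \cdot c\, R_z^{-1} \;=\; L_z\, A\, R_z^{-1} \;=\; z \ast A.
\]
Thus $z\ast$ is the composition of left translation by $\tilde L_z \in SU(3)$ and right translation by $\tilde R_z^{-1}$ with $\tilde R_z$ in the embedded $U(2)$, and hence an isometry.

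The only real obstacle is conceptual rather than computational: realizing that the same scalar $c$ can simultaneously correct the determinants of $L_z$ and $R_z$, and that this is possible precisely because the admissibility condition $\sum p_i = \sum q_i$ forces both determinants to be the same power of $z$. Once this is identified, the verifications above are immediate; note also that one does not need $c$ to vary continuously in $z$, since the statement only asserts that each individual map $z\ast$ is an isometry.
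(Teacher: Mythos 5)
Your proof is correct, and it rests on the same key observation as the paper's---that $\sum p_i = \sum q_i$ forces $L_z$ and $R_z$ to have the \emph{same} determinant $z^s$, so the discrepancy can be absorbed into a scalar that cancels in the conjugation---but you implement it via a different factorization. The paper instead considers the auxiliary action $\ast'$ with exponents $p_i' = 3p_i - s$, $q_i' = 3q_i - s$; these have zero sum, so the diagonal factors lie in the embedded $U(2)$ automatically (no cube root needed), making $\ast'$ manifestly isometric. The paper then observes $z \ast' A = z^3 \ast A$, i.e.\ the original action is isometric ``at three times the speed,'' and concludes $\ast$ itself is isometric. Your version is more direct, handling each $z$ individually by dividing $L_z$ and $R_z$ by a cube root $c$ of $z^s$ so that $\tilde L_z \in SU(3)$ and $\tilde R_z$ lies in the embedded $U(2)$; this avoids the paper's final ``speed-up'' step (which implicitly uses surjectivity of cubing on $S^1$) at the mild cost of a non-canonical choice of $c$, which, as you correctly note, is harmless since no continuity in $z$ is required. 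The verifications in steps (i)--(iii) are all correct; in fact your $\tilde L_z$ lands in the embedded $U(2)$ as well, though left $SU(3)$-invariance is all you need.
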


\begin{proof}  Consider the action $$z\ast' A = \diag(z^{p_1'}, z^{p_2'}, z^{p_3'}) A \diag(z^{q_1'}, z^{q_2'}, z^{q_3'})^{-1},$$ where $p_i' = 3p_i - (p_1 + p_2 + p_3)$   and $q_i' = 3q_i - (q_1 + q_2 + q_3)$.  Since $\det(\diag(z^{p_1'}, z^{p_2'}, z^{p_3'})) = \det(\diag(z^{q_1'}, z^{q_2'}, z^{q_3'})) = 1$, this action is defined by a subset of $K\times K$, and hence is isometric.

On the other hand, since $\sum p_i = \sum q_i$, we find \begin{align*} z\ast' A &= \diag(z^{p_1'}, z^{p_2'}, z^{p_3'}) A \diag(z^{q_1'}, z^{q_2'}, z^{q_3'})^{-1}\\
&= \diag(z^{3p_1}, z^{3p_2}, z^{3p_3}) z^{-(p_1 + p_2 + p_3)} A z^{q_1 + q_2 + q_3} \diag(z^{3q_1}, z^{3q_2}, z^{3q_3})^{-1}\\
&= \diag(z^{3p_1}, z^{3p_2}, z^{3p_3}) A \diag(z^{3q_1}, z^{3q_2}, z^{3q_3})^{-1}.\end{align*}

In particular, the action $\ast$ is isometric when done at three times the speed.  But this clearly implies that $\ast$ itself is an isometric action.

\end{proof}

The curvature calculations we are interested in all occur on the Lie algebra level.  To that end, we recall that $\mathfrak{g} = \mathfrak{su}(3)$ consists of the $3\times 3$ skew-Hermitian complex matrices.  The subpaces $\mathfrak{k}$ and $\mathfrak{p}$ are given as follows:  $$\mathfrak{k} = \{ A = (a_{ij})\in \mathfrak{g}: a_{13} = a_{23} = a_{31} = a_{32} = 0\}$$ and $$\mathfrak{p} = \{A = (a_{ij}) \in \mathfrak{g}: a_{11} = a_{12} = a_{21} = a_{22} = a_{33} = 0\}.$$ 

Eschenburg \cite{Es} found a characterization of when a $2$-plane $\sigma \subseteq \mathfrak{g}$ has zero sectional curvature with respect to the Eschenburg metric.  To state his characterization, we set $Y_3 =  i\diag(1,1,-2)\in \mathfrak{k}$ and we set $Y_1 = i\diag(-2,1,1)\in\mathfrak{k}$.

\begin{proposition}[Eschenburg]\label{prop:esch}  Suppose $SU(3)$ is equipped with the Eschenburg metric.  Then, a $2$-plane $\sigma \subseteq \mathfrak{g}$ with zero-sectional curvature must contain at least one of the following two vectors:

1.  $Y_3 = i\diag(1,1,-2)$ or

2.  $Ad_k Y_1$ for some $k\in K=U(2)$.
\end{proposition}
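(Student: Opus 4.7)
\emph{Proof proposal.} The plan is to use the Cheeger-deformation description of the Eschenburg metric from Section~\ref{sec:Emetric} together with O'Neill's formula to turn zero sectional curvature into bracket conditions on $\mathfrak{g}$, and then carry out a short algebraic case analysis using the fact that $(SU(3),U(2))$ is a symmetric pair. Since $(SU(3),\langle\cdot,\cdot\rangle_K)$ is the base of a Riemannian submersion from $(G\times K,\langle\cdot,\cdot\rangle_0+t\langle\cdot,\cdot\rangle_0|_\mathfrak{k})$ with horizontal lifts as in Section~\ref{sec:Emetric}, O'Neill's formula says that a $2$-plane $\sigma=\spa\{X,Y\}\subseteq\mathfrak{g}$ at $e$ has zero curvature iff (i) its horizontal lift has zero curvature in the product and (ii) the $A$-tensor vanishes on it. On the product (which carries a bi-invariant metric), (i) becomes the vanishing of the bracket of the lifts; expanding this using the symmetric decomposition $[\mathfrak{k},\mathfrak{k}]\subseteq\mathfrak{k}$, $[\mathfrak{k},\mathfrak{p}]\subseteq\mathfrak{p}$, $[\mathfrak{p},\mathfrak{p}]\subseteq\mathfrak{k}$ (the last holding because $SU(3)/U(2)=\mathbb{CP}^2$ is a symmetric space) and separating $\mathfrak{k}$- and $\mathfrak{p}$-components yields
\begin{equation*}
[X_\mathfrak{p},Y_\mathfrak{p}]=0,\qquad [X_\mathfrak{k},Y_\mathfrak{p}]+[X_\mathfrak{p},Y_\mathfrak{k}]=0,\qquad [X_\mathfrak{k},Y_\mathfrak{k}]=0.
\end{equation*}
These three relations force the lifted bracket to vanish, so (ii) is automatic.

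Next I would case-split on whether $\sigma\subseteq\mathfrak{k}$. If so, $\sigma$ is a $2$-dimensional abelian subalgebra of $\mathfrak{k}\cong\mathfrak{u}(2)$, hence a Cartan of $\mathfrak{k}$, and therefore contains the center of $\mathfrak{u}(2)$; under the given embedding this center is spanned by $Y_3$, so $Y_3\in\sigma$. Otherwise some spanning vector has nonzero $\mathfrak{p}$-part. Identifying $\mathfrak{p}\cong\mathbb{C}^2$ by sending $(u,v)\in\mathbb{C}^2$ to the skew-Hermitian matrix with entries $u,v$ in positions $(1,3),(2,3)$, a direct matrix-commutator computation shows that two nonzero elements of $\mathfrak{p}$ commute iff the corresponding $\mathbb{C}^2$-vectors are real scalar multiples of each other. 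Hence $\dim\pi_\mathfrak{p}(\sigma)=1$ and $\sigma\cap\mathfrak{k}$ is one-dimensional; choose a nonzero $Q\in\sigma\cap\mathfrak{k}$ and $P\in\sigma$ with $P_\mathfrak{p}\neq 0$. The remaining two bracket conditions then read $[Q,P_\mathfrak{p}]=0=[Q,P_\mathfrak{k}]$.

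Finally, exploiting the (up to positive scale) transitivity of the $K$-action on $\mathbb{C}^2\setminus\{0\}$, I would conjugate by some $k_0\in K$ to move $P_\mathfrak{p}$ to the standard vector $(1,0)\in\mathbb{C}^2$; a short $\mathfrak{u}(2)$-computation then identifies the $\mathfrak{k}$-stabilizer of $(1,0)$ as the line spanned by $i\diag(1,-2,1)$. One checks that $i\diag(1,-2,1)=Ad_{k_1}Y_1$ for the element $k_1\in K$ realizing the swap of the first two coordinates in $SU(3)$, so $Q$ is a scalar multiple of $Ad_{k_0^{-1}k_1}Y_1$ and $Ad_kY_1\in\sigma$ for the appropriate $k\in K$. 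The main obstacle is the bookkeeping in this last step: parametrizing the twisted standard $K$-action on $\mathfrak{p}$, carrying out the commutator computation in $\mathfrak{p}$ cleanly, and verifying that $i\diag(1,-2,1)$ really lies in the $Ad_K$-orbit of $Y_1=i\diag(-2,1,1)$. The conceptual point is that $K$ permutes coordinates $1$ and $2$ of $SU(3)$ but no element of $K$ exchanges either of them with coordinate $3$, so $Y_3$ (central in $\mathfrak{k}$) and the $Ad_K$-orbit of $Y_1$ are genuinely distinct and both must appear in the conclusion.
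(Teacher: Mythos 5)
The paper states this proposition as a known result of Eschenburg and cites \cite{Es} for it without reproducing a proof, so there is no ``paper proof'' to compare against; what follows is an assessment of your reconstruction on its own terms.

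Your argument is essentially sound and follows the natural route. Passing to the $K$-Cheeger deformation of the bi-invariant metric and applying O'Neill reduces zero curvature of $\sigma\subseteq\mathfrak{g}$ to the vanishing of the bracket of the horizontal lifts in $\mathfrak{g}\oplus\mathfrak{k}$, and (using $\langle\cdot,\cdot\rangle_K$'s lift $(X,\tfrac1t X_\mathfrak{k})$, the fact that the lifted metric is bi-invariant on $G\times K$, and the symmetric-pair relations for $(\mathfrak{su}(3),\mathfrak{u}(2))$) separating $\mathfrak{k}$- and $\mathfrak{p}$-components does give exactly your three bracket conditions; moreover once the lifted bracket vanishes $A$ vanishes automatically, so the ``iff'' is harmless. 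The case split is also correct: when $\sigma\subseteq\mathfrak{k}$, a two-dimensional abelian subalgebra of $\mathfrak{u}(2)$ is maximal abelian (rank 2), hence contains the centre $\mathbb{R}\,i\diag(1,1,-2)=\mathbb{R}Y_3$; and your commutator criterion on $\mathfrak{p}\cong\mathbb{C}^2$ is accurate, forcing $\pi_\mathfrak{p}(\sigma)$ one-dimensional and a one-dimensional line $\sigma\cap\mathfrak{k}$. The stabilizer computation also checks out: writing $Q=\diag(Q',-\operatorname{tr}Q')$ with $Q'=\begin{pmatrix}ia&z\\-\bar z&ib\end{pmatrix}$, $\operatorname{ad}_Q$ acts on $\mathfrak{p}\cong\mathbb{C}^2$ by $Q'+\operatorname{tr}(Q')I$, and annihilating $(1,0)$ forces $z=0$, $b=-2a$, i.e.\ $Q\in\mathbb{R}\,i\diag(1,-2,1)=\mathbb{R}\,Ad_{k_1}Y_1$ where $k_1$ is the coordinate swap in $U(2)$.

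Two small imprecisions worth cleaning up. First, the $K$-action on $\mathfrak{p}\cong\mathbb{C}^2$ is $\mathbf{v}\mapsto\det(k)\,k\mathbf{v}$, so it is transitive on each sphere, not on rays; you should say ``move $P_\mathfrak{p}$ to a positive real multiple of $(1,0)$,'' which is all you need since the stabilizer line is unchanged by scaling. Second, your plan only uses the two conditions $[Q,P_\mathfrak{p}]=0$ and $[X_\mathfrak{p},Y_\mathfrak{p}]=0$ in the nontrivial case; it is worth stating explicitly that the remaining relation $[Q,P_\mathfrak{k}]=0$ is not needed to conclude that $Q\in\mathbb{R}\,Ad_k Y_1$ (it is an extra constraint that further restricts which such planes actually have zero curvature, which is consistent with the proposition being stated only as a necessary condition). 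With those two points tightened, the argument is a valid proof of the proposition.
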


From O'Neill's formula \cite{On1}, a zero-curvature plane in $E_{p,q}$ with the Eschenburg metric must lift to a horizontal zero-curvature plane in $SU(3)$.  Using this fact and Proposition \ref{prop:esch}, Kerin \cite{Ke1} proved a characterization of points $[A]\in E_{p,q}$ having a zero-curvature plane with respect to the Eschenburg metric.  To describe his results, we set $P = \diag(p_1,p_2,p_3)$, $Q =\diag(q_1,q_2,q_3)$ and we view $Ad_k Y_1$ and $Y_3$ as left invariant vector fields on $SU(3)$.

\begin{theorem}[Kerin]\label{thm:kerineqns}  Suppose $E_{p,q}$ is equipped with the Eschenburg metric and let $A = (a_{ij})\in SU(3)$.  \begin{itemize} \item $Y_3$ is horizontal at $A$ if and only if $\langle Y_3, Ad_{A^{-1}}(iP)-iQ\rangle_0 = 0,$ which holds if and only if \begin{equation}\label{eqn:2.3} \sum_{j=1}^3 |a_{j3}|^2 p_j = q_3 \end{equation}

\item For $k = (k_{ij})\in K$, $Ad_k Y_1$ is horizontal at $A$ if and only if $\langle Ad_k Y_1, Ad_{A^{-1}}(iP) - iQ\rangle_0 = 0,$ which holds if and only if \begin{equation}\label{eqn:2.4} \sum_{j=1}^3 |((Ak)_{j1})|^2 p_j = |k_{11}|^2q_1 + |k_{21}|^2q_2\end{equation}

\end{itemize}
Moreover, if either $Y_3$ or $Ad_k Y_1$ is horizontal at $A$, then there is a zero-curvature plane at $[A]\in E_{p,q}$.

\end{theorem}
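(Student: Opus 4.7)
The plan is to prove the theorem in three stages. First, I would identify the vertical distribution of the $S^1$-fibration $SU(3)\to E_{p,q}$: differentiating the $S^1$-action at $z=1$ gives the Killing field $A\mapsto (iP)A - A(iQ)$, and left-translating by $A^{-1}$ (which is permitted since $\langle\cdot,\cdot\rangle_K$ is left-invariant) identifies the vertical vector at $A$ with $V_A := Ad_{A^{-1}}(iP) - iQ\in\mathfrak{g}$ under the usual left-invariant identification of $T_A SU(3)$ with $\mathfrak{g}$.

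Second, I would translate horizontality with respect to the Eschenburg metric into a condition involving only the bi-invariant metric $\langle\cdot,\cdot\rangle_0$. A left-invariant field $X$ is horizontal at $A$ iff $\langle X, V_A\rangle_K = 0$. Since $Y_3\in\mathfrak{k}$ and $Ad_k Y_1\in\mathfrak{k}$ (because $Y_1\in\mathfrak{k}$ and $K$ preserves $\mathfrak{k}$ under $Ad$), the identity $\langle X,\,\cdot\,\rangle_K = \langle X, \phi_K(\,\cdot\,)\rangle_0$ combined with $\mathfrak{k}\perp\mathfrak{p}$ in $\langle\cdot,\cdot\rangle_0$ yields $\langle X, V_A\rangle_K = \tfrac{t}{t+1}\langle X, V_A\rangle_0$ for all $X\in\mathfrak{k}$. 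This establishes the first ``iff'' in each bullet.

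Third, I would perform the trace computations. The key identity is that for any diagonal $X = i\diag(d_1,d_2,d_3)$ and any $B\in SU(3)$, $-Tr(X\,B^{-1}(iP)B) = \sum_j p_j\sum_i d_i|b_{ji}|^2$. Setting $X = Y_3$ and $B = A$, the row-normalization $\sum_i|a_{ji}|^2=1$ collapses the sum to $\sum_j p_j - 3\sum_j|a_{j3}|^2 p_j$; meanwhile $\langle Y_3, iQ\rangle_0 = q_1+q_2-2q_3 = \sum p_j - 3q_3$ using $\sum p_i = \sum q_i$. Subtracting gives a factor of $3(q_3-\sum_j|a_{j3}|^2 p_j)$, yielding (2.3). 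For $X = Ad_k Y_1$, $Ad$-invariance of $\langle\cdot,\cdot\rangle_0$ moves $k$ to $A$: the first term becomes $\langle Y_1, Ad_{(Ak)^{-1}}(iP)\rangle_0$, handled by the same identity with $B = Ak$. The remaining term $\langle Y_1, Ad_{k^{-1}}(iQ)\rangle_0$ reduces to a $2\times 2$ unitarity calculation because $k\in K$ is block-diagonal with upper block $k'\in U(2)$; using $|k_{11}|^2+|k_{12}|^2 = |k_{21}|^2+|k_{22}|^2 = 1$ assembles everything into (2.4).

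The main obstacle is the ``moreover'' clause, which goes beyond Proposition \ref{prop:esch} (that proposition records a necessary condition, but here one needs sufficiency). My plan is to exploit the explicit description of zero-curvature planes in the Cheeger-deformed $SU(3)$: their horizontal lifts to the bi-invariant $SU(3)\times U(2)$ are spanned by vectors commuting in both factors, so for a $\mathfrak{k}$-vector such as $Y_3$ or $Ad_k Y_1$, the partner $Y_2$ need only commute with that $\mathfrak{k}$-vector in $\mathfrak{su}(3)$. The centralizer of $Y_3$ inside $\mathfrak{su}(3)$ is exactly the $4$-dimensional subalgebra $\mathfrak{k}$, and the centralizer of $Ad_k Y_1$ is the $4$-dimensional conjugate subalgebra $Ad_k(\mathfrak{k}')$ where $\mathfrak{k}'$ is the centralizer of $Y_1$. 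In either case, intersecting with the $7$-dimensional horizontal subspace (the $\langle\cdot,\cdot\rangle_K$-orthogonal complement of $V_A$) gives a subspace of dimension at least $3$ that already contains the starting vector, so a linearly independent horizontal partner $Y_2$ exists. The resulting horizontal zero-curvature plane in $SU(3)$ descends via O'Neill's formula to a zero-curvature plane at $[A]\in E_{p,q}$.
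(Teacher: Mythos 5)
Your stages 1--3 are correct and match the paper's approach: identify the vertical Killing field as $V_A = \mathrm{Ad}_{A^{-1}}(iP) - iQ$ after left translation, reduce the $\langle\cdot,\cdot\rangle_K$-horizontality condition to $\langle\cdot,\cdot\rangle_0$-orthogonality using that $Y_3$ and $\mathrm{Ad}_k Y_1$ lie in $\mathfrak{k}$ where $\phi_K$ acts by a scalar, and carry out the trace calculations; the key identity $-\mathrm{Tr}(X\,B^{-1}(iP)B) = \sum_j p_j \sum_i d_i|b_{ji}|^2$ is correct and does yield \eqref{eqn:2.3} and \eqref{eqn:2.4} as you indicate.

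However, your proof of the ``moreover'' clause has a genuine gap, and it is precisely the gap the paper's remark after this theorem is devoted to pointing out in Kerin's original proof. You conclude by asserting that the horizontal zero-curvature plane in $SU(3)$ ``descends via O'Neill's formula to a zero-curvature plane at $[A]\in E_{p,q}$.'' O'Neill's formula goes the wrong way for this: for a Riemannian submersion, the sectional curvature of the projected plane equals the sectional curvature of the horizontal lift \emph{plus} a non-negative $A$-tensor term. So a flat in $E_{p,q}$ lifts to a horizontal flat in $SU(3)$ (this is what the paper uses O'Neill for), but a horizontal flat in $SU(3)$ need not project to a flat in $E_{p,q}$ unless one also shows the $A$-tensor vanishes on that plane. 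What actually finishes the argument is Eschenburg's criterion \cite[Example]{Es3}: a horizontal plane $\operatorname{span}\{W_1,W_2\}$ projects to a zero-curvature plane in the quotient provided $[W_1,W_2]=0$ \emph{and} $[(W_1)_\mathfrak{k},(W_2)_\mathfrak{k}]=0$. Your centralizer construction, if justified by this result instead of by O'Neill, does work: picking a horizontal $Y_2$ in the centralizer of $W_1 \in \{Y_3, \mathrm{Ad}_k Y_1\}$ gives $[W_1,Y_2]=0$; and since $W_1\in\mathfrak{k}$, taking $\mathfrak{k}$-components of $[W_1,Y_2]=[W_1,(Y_2)_\mathfrak{k}]+[W_1,(Y_2)_\mathfrak{p}]$ forces $[W_1,(Y_2)_\mathfrak{k}]=0$, which is exactly $[(W_1)_\mathfrak{k},(Y_2)_\mathfrak{k}]=0$. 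Incidentally, your route is slightly cleaner than the paper's, which restricts $Y_2$ to $\mathrm{Ad}_k$ of a two-dimensional subspace $\mathfrak{p}'\subseteq\mathfrak{p}$ (where $(Y_2)_\mathfrak{k}=0$ makes the second bracket trivially vanish), at the cost of a tighter dimension count $2+7-8\geq 1$ rather than your $4+7-8\geq 3$. But without replacing the O'Neill citation by the stronger Eschenburg result, the proof of the final statement is incomplete.
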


\begin{remark}Kerin's proof \cite[Theorem 2.2]{Ke1} of the last statement of Theorem \ref{thm:kerineqns} has a small gap as it relies on Proposition \ref{prop:esch}, which is incorrectly stated in \cite[Lemma 2.1]{Ke1} as a biconditional.  To complete the proof when $Ad_k Y_1$ is horizontal at $A$, one observes that the centralizer of $Y_1$ contains a two-dimensional subspace $\mathfrak{p}'\subseteq \mathfrak{p}$ consisting of matrices with $a_{13} = a_{31} = 0$.  In particular, after translating the seven-dimensional horizontal space at $A$ to $T_e G$, one finds it intersects $Ad_k \mathfrak{p}'\subseteq \mathfrak{p}$ in a subspace of dimension at least $1$.  Selecting a non-zero vector $Ad_k X$ from this subspace, one verifies that $[Ad_k Y_1, Ad_k X] = [Y_1, X] = 0$ and that $[(Ad_k Y_1)_{\mathfrak{k}}, (Ad_k X)_\mathfrak{k}] = 0$ since $(Ad_k X)_{\mathfrak{k}} = 0$.  From \cite[Example]{Es3}, it now follows that the plane $\operatorname{span}\{Ad_k Y_1, Ad_k X\}$ projects to a zero-curvature plane at $[A]$ in $E_{p,q}$.  The case where $Y_3$ is horizontal is similar but easier: here, the entire space $\mathfrak{p}$ centralizes $Y_1$, so one has a three dimensional set of $X$s to choose from.
\end{remark}

\begin{remark}\label{remark}  We will often find solutions to \eqref{eqn:2.4} as follows.  For $A\in SU(3)$, we consider the function $f_A:K\rightarrow \mathbb{R}$ given by the difference of the left hand side and right hand side of \eqref{eqn:2.4}.  The function $f_A$ is obviously continuous, and $K$ is a connected.  Thus, it is sufficient to show that $f_A$ takes on both non-positive and non-negative values.

\end{remark}

Recall the notation $\underline{p} = \min\{p_1,p_2,p_3\}$ and $\overline{p} = \max\{p_1,p_2,p_3\}$.  We will use the following estimate on the left hand side of \eqref{eqn:2.4}.

\begin{lemma}\label{lem:p}  For any $A\in SU(3)$ and any $k\in U(2)$, $$\underline{p}\leq \sum_{j=1}^3 |((Ak)_{j1})|^2 p_j \leq \overline{p}.$$

\end{lemma}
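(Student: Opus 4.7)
The plan is to observe that the weights $|((Ak)_{j1})|^2$ for $j=1,2,3$ are non-negative real numbers summing to $1$, so the expression $\sum_{j=1}^3 |((Ak)_{j1})|^2 p_j$ is a convex combination of $p_1, p_2, p_3$ and therefore lies in $[\underline{p}, \overline{p}]$.

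To justify that the weights sum to $1$, I would first recall that, under the embedding $k \mapsto \diag(k, \overline{\det(k)})$ used in Definition \ref{def:esch}, the subgroup $K = U(2)$ is regarded as a subgroup of $SU(3) \subseteq U(3)$. Hence $Ak$ is a product of two unitary $3 \times 3$ matrices, and so is itself unitary. The columns of a unitary matrix form an orthonormal basis of $\mathbb{C}^3$, and in particular the first column of $Ak$ has unit length. This gives
\[
\sum_{j=1}^3 |((Ak)_{j1})|^2 = 1.
\]

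With this identity in hand, the non-negativity of each $|((Ak)_{j1})|^2$ immediately yields
\[
\underline{p} = \underline{p} \sum_{j=1}^3 |((Ak)_{j1})|^2 \leq \sum_{j=1}^3 |((Ak)_{j1})|^2 p_j \leq \overline{p} \sum_{j=1}^3 |((Ak)_{j1})|^2 = \overline{p},
\]
completing the proof. There is no real obstacle here: the whole argument is just the observation that weights coming from the squared moduli of a single column of a unitary matrix form a probability distribution on $\{1,2,3\}$, against which we are averaging the $p_j$.
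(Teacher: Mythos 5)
Your proof is correct and follows essentially the same approach as the paper: both observe that the first column of $Ak \in SU(3)$ has unit length, so the squared moduli form a probability distribution, and the sum is a convex combination of the $p_j$. You are slightly more explicit about the embedding $U(2) \hookrightarrow SU(3)$ ensuring $Ak$ is unitary, but the core argument is identical.
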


\begin{proof}  Notice that $(Ak)_{j1}$ comprises the first column of $Ak\in SU(3)$.  Thus $\sum_{j=1}^3 |(Ak)_{j1}|^2 = 1$ and obviously each term $|(Ak)_{j1}|^2$ is non-negative.  Thus, we have $$ \underline{p} = \sum_{j=1}^3 |(Ak)_{j1}|^2 \underline{p} \leq \sum_{j=1}^3 |(Ak)_{j1}|^2 p_j \leq \sum_{j=1}^3 |(Ak)_{j1}|^2\overline{p} =\overline{p}.$$

%we can convert the problem of finding the extrema of $\sum_{j=1}^3 |(Ak)_{j1}|^2 p_j$ to the linear programming problem of finding the extrema of  $g:\mathbb{R}^3\rightarrow\mathbb{R}$ with $g(x,y,z) = p_1 x + p_2 y + p_3 z$ subject to the constraints that all three of $x,y,z\geq 0$ and $x+y+z=1$.  Standard approaches give the solution.

\end{proof}

We will need a more explicit form for the zero curvature planes containing $Ad_k Y_1$.  Set $$\mathcal{Z} = \left\{\begin{bmatrix} \beta i & 0 & 0\\ 0 & \beta i & z\\ 0 & -\overline{z} & -2\beta i\end{bmatrix} \in \mathfrak{su}(3):\beta\in \mathbb{R}\text{ and } z\in \mathbb{C}\right\}.$$  We note that the elements of $\mathcal{Z}$ all commute with $Y_1$.  In fact, $\mathcal{Z}$ and $Y_1$ span the centralizer of $Y_1$.

\begin{lemma}\label{lem:othervec}  Suppose $\sigma\subseteq \mathfrak{g}$ is a zero-curvature plane with respect to the Eschenburg metric on $SU(3)$.  In addition, assume that $Ad_k Y_1\in\sigma$ for some $k\in K$.  Then $$\sigma = \operatorname{span}\{ Ad_k Y_1, Ad_k Z\}$$ for some $Z\in \mathcal{Z}$.

\end{lemma}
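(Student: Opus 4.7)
The plan is to translate the zero-curvature hypothesis on $\sigma$ into the algebraic condition that the second generator of $\sigma$ commutes with $Ad_k Y_1$, and then to identify the centralizer of $Y_1$ in $\mathfrak{g} = \mathfrak{su}(3)$ explicitly enough to spot the $\mathcal{Z}$-direction.

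First I will invoke the standard zero-curvature criterion for Cheeger-deformed bi-invariant metrics: a plane $\operatorname{span}\{X,Y\} \subseteq \mathfrak{g}$ carrying zero sectional curvature in the Eschenburg metric must satisfy $[X,Y]=0$. This follows from O'Neill's formula applied to the Riemannian submersion $G\times K\to G$, $(g,k)\mapsto gk$, used already in the construction of the metric: the base curvature of the horizontal lift in the product bi-invariant metric and the vertical bracket term are each non-negative, so zero sectional curvature forces $[\tilde{X},\tilde{Y}]=0$ in $\mathfrak{g}\oplus\mathfrak{k}$, whose $\mathfrak{g}$-component, after using the symmetric-pair relations $[\mathfrak{k},\mathfrak{k}]\subseteq\mathfrak{k}$, $[\mathfrak{k},\mathfrak{p}]\subseteq\mathfrak{p}$, $[\mathfrak{p},\mathfrak{p}]\subseteq\mathfrak{k}$, recovers $[X,Y]=0$. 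This is precisely the criterion quoted from \cite[Example]{Es3} in the remark following Theorem \ref{thm:kerineqns}. Choosing any $V\in\sigma$ with $\sigma=\operatorname{span}\{Ad_kY_1,V\}$, this forces $[Ad_kY_1,V]=0$, equivalently $V\in Ad_k\,C(Y_1)$, where $C(Y_1)$ denotes the centralizer of $Y_1$ in $\mathfrak{g}$. Writing $V=Ad_k W$ with $W\in C(Y_1)$, the lemma will follow at once from the direct sum decomposition $C(Y_1)=\mathbb{R}\,Y_1\oplus\mathcal{Z}$.

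To verify this decomposition, I will compute $C(Y_1)$ by hand. Since $Y_1=i\diag(-2,1,1)$ has a simple eigenvalue $-2i$ and a double eigenvalue $i$, any element of $C(Y_1)\subseteq\mathfrak{su}(3)$ preserves the corresponding eigenspaces and has the block-diagonal form $\diag(\alpha i,B)$ with $B\in\mathfrak{u}(2)$ and the overall trace zero; thus $\dim C(Y_1)=4$. Both $\mathbb{R}\,Y_1$ and $\mathcal{Z}$ lie in $C(Y_1)$, with dimensions $1$ and $3$. A short computation matching the diagonal triple $(-2,1,1)$ of $Y_1$ against the diagonal triple $(\beta,\beta,-2\beta)$ of an element of $\mathcal{Z}$ shows these two diagonal patterns span the entire $2$-dimensional space of real-imaginary traceless diagonals; the off-diagonal piece (the $(2,3)$--$(3,2)$ block) is contributed entirely by $\mathcal{Z}$. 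Hence $\mathbb{R}\,Y_1\cap\mathcal{Z}=0$ and the sum has the right dimension, giving $C(Y_1)=\mathbb{R}\,Y_1\oplus\mathcal{Z}$.

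Writing $W=cY_1+Z$ with $c\in\mathbb{R}$ and $Z\in\mathcal{Z}$ then gives $V = c\cdot Ad_kY_1 + Ad_kZ$, whence $\sigma=\operatorname{span}\{Ad_kY_1,V\}=\operatorname{span}\{Ad_kY_1,Ad_kZ\}$, as required. The main obstacle in this proof is conceptual rather than computational: one needs to be careful that the zero-curvature condition for the Cheeger-deformed Eschenburg metric really yields the commutator vanishing $[Ad_kY_1,V]=0$ in $\mathfrak{g}$ (not merely a weaker relation). Once this is pinned down, the rest of the argument is pure linear algebra inside a $4$-dimensional subspace of $\mathfrak{su}(3)$.
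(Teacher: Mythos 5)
Your proposal is correct and follows essentially the same route as the paper: the zero-curvature hypothesis forces $[Ad_kY_1,V]=0$ for the second generator $V$ of $\sigma$, and then one identifies $C(Y_1)=\mathbb{R}\,Y_1\oplus\mathcal{Z}$ and conjugates back by $Ad_{k^{-1}}$. One minor imprecision in your derivation of the commutation: you assert that the $\mathfrak{g}$-component of $[\tilde X,\tilde Y]=0$, namely $[\phi_K(X),\phi_K(Y)]=0$, recovers $[X,Y]=0$ via the symmetric-pair relations alone, but for general $X,Y$ the $\mathfrak{k}$-part of $[\phi_K(X),\phi_K(Y)]$ equals $a^2[X_\mathfrak{k},Y_\mathfrak{k}]+[X_\mathfrak{p},Y_\mathfrak{p}]$ with $a=\tfrac{t}{t+1}\neq 1$, so one also needs the $\mathfrak{k}$-component $[X_\mathfrak{k},Y_\mathfrak{k}]=0$ of $[\tilde X,\tilde Y]=0$ to conclude $[X_\mathfrak{p},Y_\mathfrak{p}]=0$ and hence $[X,Y]=0$. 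The paper sidesteps this entirely by exploiting $Ad_kY_1\in\mathfrak{k}$, so that $\phi_K(Ad_kY_1)=\tfrac{t}{t+1}Ad_kY_1$ and $[Ad_kY_1,\phi_K(X)]=0$ splits directly into a $\mathfrak{k}$-part and a $\mathfrak{p}$-part that each vanish. Your explicit verification that $C(Y_1)=\mathbb{R}\,Y_1\oplus\mathcal{Z}$ is a welcome addition, since the paper merely asserts this after defining $\mathcal{Z}$.
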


\begin{proof}
Because $Ad_k Y_1 \in \sigma$, we know that $\sigma$ is spanned by $Ad_k Y_1$ and some other vector $X$.  The plane $\sigma$ lifts to the horizontal plane $\hat{\sigma}$ in $G\times K$ which is spanned by the vectors $(\phi_K(Ad_k Y_1), \frac{1}{t} \phi_K(Ad_k Y_1)_\mathfrak{k})$ and $(\phi_K(X), \frac{1}{t}\phi_K(X)_\mathfrak{k})$.  Since $\sigma$ has zero-curvature, O'Neill's formula implies that $\hat{\sigma}$ must also have zero-curvature, which then implies that $[\phi_K(Ad_k Y_1), \phi_K(X)] = 0$.  But notice that $Y_1\in \mathfrak{k}$ so $Ad_k Y_1\in \mathfrak{k}$.  In particular, $\phi_K(Ad_k Y_1) = \frac{t}{t+1} Ad_k Y_1$, so we now conclude that $[Ad_k Y_1, \phi(X)] = 0$.   Writing $X = X_\mathfrak{k} + X_\mathfrak{p}$, we find $$ 0 = [Ad_k Y_1, \phi(X)] = \underbrace{\frac{t}{t+1}[Ad_k Y_1, X_\mathfrak{k}]}_{\in \mathfrak{k}} + \underbrace{[ Ad_k Y_1, X_\mathfrak{p}]}_{\in \mathfrak{p}}.$$  Thus, we conclude that $[Ad_k Y_1, X_\mathfrak{k}] = [Ad_k Y_1, X_\mathfrak{p}] = 0$, so $[Ad_k Y_1, X] = 0$.  Since $Ad_k$ is a Lie algebra isomorphism, $[Y_1, Ad_{k^{-1}} X]= 0$, so $Ad_{k^{-1}} X$ centralizes $Y_1$.

Thus, we have $Ad_{k^{-1}} X = \lambda Y_1 + Z$ for some $\lambda \in \mathbb{R}$ and $Z\in \mathcal{Z}$.  Thus, $X = \lambda Ad_k Y_1 + Ad_k Z$.  Since $Ad_k Y_1$ and $X$ are both in $\sigma$, so is $Ad_k Z$ and $\sigma = \operatorname{span}\{ Ad_k Z, Ad_k Y_1\}$.

\end{proof}

%\begin{lem}\label{lem:obs}The map $\phi_K$ preserves $\mathcal{Z}$ and for any $k\in K$, $$(\phi_K\circ Ad_k)|_{\mathcal{Z}} = (Ad_k\circ \phi_K)|_{\mathcal{Z}}: \mathcal{Z}\rightarrow Ad_k \mathcal{Z}.$$

%\end{lem}

%\begin{proof}  Write $Z\in \mathcal{Z}$ as $Z = Z_{\mathfrak{k}} + Z_{\mathfrak{p}}$.  Since both $Z_\mathfrak{k}, Z_\mathfrak{p}\in \mathcal{Z}$, it follows that $\phi_K(Z) = \frac{t}{t+1} Z_\mathfrak{k} + Z_\mathfrak{p}\in \mathcal{Z}$, proving the first statement.

%For the second statement, observe that $Z_\mathfrak{k}\in Z(\mathfrak{k})$, the center of $\mathfrak{k}$, so $Ad_k Z_\mathfrak{k} = Z_\mathfrak{k}$.  Thus, $Ad_k(Z) = Z_\mathfrak{k} + Ad_k Z_{\mathfrak{p}}$.  In addition, observe that $Ad_k$ preserves the splitting $\mathfrak{g} = \mathfrak{k}\oplus \mathfrak{p}$.  It now follows that both $\phi_K(Ad_k Z)$ and $Ad_k (\phi_K(Z))$ evaluate to $\frac{t}{t+1} Z_\mathfrak{k} + Ad_k Z_\mathfrak{p}$.

%\end{proof}

\subsection{The Wilking metric}\label{sec:Wmetric}

We now describe a general method, due to Wilking \cite{Wi}, for constructing non-negatively curved metrics on Lie group and their quotients.  When compared with Cheeger deformed metrics, these metrics typically have fewer zero-curvature planes.  We refer to this method as Wilking's doubling trick.

Consider two closed subgroups $K_1,K_2\subseteq G$, giving rise to two Cheeger deformations $\langle \cdot, \cdot \rangle_{K_i},$ with $i=1,2$. 
 Then $G\times G$ supports the metric $\langle \cdot, \cdot \rangle_{K_1} + \langle \cdot, \cdot \rangle_{K_2}$ which is left invariant and of non-negative sectional curvature.  The action of $G$ on $G\times G$ given by $g\ast(g_1,g_2) = (gg_1,gg_2)$ is free and isometric, so induces a Riemannian metric on the quotient space $G\backslash (G\times G)$.  This quotient space is diffeomorphic to $G$ (see, e.g., \cite{Es2}), with a diffeomorphism being induced from the map $(g_1,g_2)\mapsto (g_1^{-1} g_2)$. Thus this construction induces a new metric $\langle \cdot, \cdot\rangle_{K_1,K_2}$ on $G$.

The isometric $K_1\times K_2$ action on $G\times G$ obtained by right multiplication descends to an isometric action on $(G,\langle \cdot, \cdot \rangle_{K_1,K_2})$, which implies that this new metric is left $K_1$-invariant and right $K_2$-invariant.  In particular, if $L\subseteq K_1\times K_2$, then the induced action of $L$ on $G$ is isometric.

In the context of Eschenburg spaces, we take $L = S^1$, embedded into $U(3)\times U(3)$ as $z\mapsto (\diag(z^{p_1}, z^{p_2}, z^{p_3}), \diag(z^{q_1}, z^{q_2}, z^{q_3}))$.  Since $S^1$ is not a subset of $SU(3)\times SU(3)$, it is not immediately obvious that $G\backslash G\times G/ S^1$ is diffeomorphic to $E_{p,q}$.  Indeed, the notation $G\times G/S^1$ does not actually even make sense.  Nevertheless, because $\sum p_i = \sum q_i$, $S^1$ does act in a well-defined and isometric manor on $\Delta G\backslash (G\times G)$.  In more detail, we view $G\times G\subseteq U(3)\times U(3)$.  Then $L$ acts on $\Delta U(3)\backslash (U(3)\times U(3))$.  We observe that the natural map $\Delta G\backslash (G\times G)\rightarrow \Delta U(3)\backslash (U(3)\times U(3))$ is injective.  Indeed, if $(A,B), (C,D)\in G\times G$ are in the same $\Delta U(3)$ orbit, then we have $(UA,UB) = (C,D)$ for some $U\in U(3)$.  But then $\det(U) =\det(UA) = \det(C) = 1$, so that $(U,U)\in \Delta G$.  Viewing $\Delta G\backslash (G\times G)$ as  subset of $\Delta U(3)\backslash (U(3)\times U(3))$ and choosing $(A,B)\in G\times G$, the $S^1$ action takes $[(A,B)]$ to $z\ast [(A,B)]=[(A\diag(z^{p_1}, z^{p_2}, z^{p_3})^{-1}, B\diag(z^{q_1}, z^{q_2}, z^{q_3})^{-1})]$. 
 Choosing any matrix $U\in U(3)$ with $\det(U) = z^{p_1 + p_2 + p_3} = z^{q_1+q_2+q_3}$, we observe that $$[(A\diag(z^{p_1}, z^{p_2}, z^{p_3})^{-1}, B\diag(z^{q_1}, z^{q_2}, z^{q_3})^{-1})]$$ is equal to  $$[(UA\diag(z^{p_1}, z^{p_2}, z^{p_3})^{-1}, UB\diag(z^{q_1}, z^{q_2}, z^{q_3})^{-1})] \in \Delta G\backslash (G\times G).$$  In particular the $L$ action on $\Delta U(3)\backslash(U(3)\times U(3))$ preserves $\Delta G\backslash (G\times G)$.

 Moreover, the $S^1$ action on $\Delta G\backslash (G\times G)$ is isometric.  To see this, observe that via the diffeomorphism $\Delta G\backslash (G\times G) \cong G$, the $S^1$ action on $\Delta G\backslash(G\times G)$ becomes the action given in Proposition \ref{prop:stillisometric}.  In particular, the proof of Proposition \ref{prop:stillisometric} applies in this case as well.

\begin{definition}
  The \textit{Wilking metric on $SU(3)$} is the metric obtained by applying Wilking's doubling trick in the case where both copies of $G = SU(3)$ are equipped with the Eschenburg metric.  The \textit{Wilking metric on an Eschenburg space $E_{p,q}$} is the metric obtained as a submersion metric induced from the Wilking metric on $SU(3)$.% $SU(3)$ is equipped with the Wilking metric, the \textit{Wilking metric} on a free isometric quotient of $SU(3)$ is the unique metric on the quotient making the natural projection map a Riemannian submersion.

\end{definition}

Now that we have defined both the Eschenburg and Wilking metrics, we can prove Proposition \ref{prop:nochange}.

\begin{proof}(Proof of Proposition \ref{prop:nochange}).  We will work down the list of options for $(p',q')$ given in the statement of Proposition \ref{prop:nochange}.

1.  $(p',q') = (p+t,q+t)$.  In this case, the corresponding actions by $S^1$ are identical, so the two quotients are isometric with respect to both the Eschenburg and Wilking metrics.

2.  $(p',q') = (-p,-q)$.  In this case, the two actions differ only by a change of coordinates in the circle. Hence, the quotients are isometric with respect to both the Eschenburg and Wilking metrics.

3. and 4.  $(p',q') = (p_\sigma,q)$ for $\sigma \in S_3$.  In this case, let $R$ denote the matrix obtained by applying $\sigma\in S_3$ to the rows of the identity matrix $I$.  Observe that $\det R = \pm 1$.  Set $$S = \begin{cases} R& \text{ if } \det R = 1\\ R\diag(1,1,-1)& \text{ if } \det R = -1\end{cases}.$$

We observe that $S\in SU(3)$ and that $S$ conjugates $\diag(z^{p_1}, z^{p_2}, z^{p_3})$ to $\diag(z^{p_{\sigma(1)}}, z^{p_{\sigma(2)}}, z^{p_{\sigma(3)}})$, it follows that left multiplication by $S$, $L_S:SU(3)\rightarrow SU(3)$ induces a diffeomorphism between $E_{p,q}$ and $E_{p',q'}$.  As the Eschenburg metric is left $SU(3)$-invariant, $L_S$ is an isometry with respect to the Eschenburg metric.  On the other hand, the Wilking metric is only left $U(2$)-invariant, and $L_S\in U(2)$ if and only if $\sigma(3) = 3$.

5. and 6. $(p',q') = (p, q_\sigma)$ for $\sigma \in S_3$.  The proof is analogous to the previous case using right multiplication by $S^{-1}$ instead of left multiplication by $S$.  The isometry statement follows as in the previous case by noting that both the Eschenburg and Wilking metrics are only right $U(2)$-invariant.

7.  $(p',q') = (q,p)$.  In this case, the inverse map $\iota:SU(3)\rightarrow SU(3)$ induces a diffeomorphism between $E_{p,q}$ and $E_{p',q'}$.  To see the map $\iota$ induces an isometry with respect to the Wilking metric, note that viewing $SU(3)$ as $\Delta SU(3)\backslash (SU(3)\times SU(3))$, $\iota$ corresponds to the map interchanging the two factors of $SU(3)\times SU(3)$.   Because the same metric is used on both factors, interchanging them is an isometry.

\end{proof}

We now derive conditions under which a point $[A]\in E_{p,q}$ has a zero-curvature plane with respect to the Wilking metric.  We will abbreviate $\phi_{U(2)}$ to $\phi$, so $\phi(X) = \frac{t}{t+1} X_\mathfrak{k}+ X_{\mathfrak{p}}$.

Given $X\in \mathfrak{g}$, we set $$\widehat{X} = (-\phi^{-1}(Ad_{A^{-1}} X), \phi^{-1}(X)) \in \mathfrak{g}\oplus \mathfrak{g}$$. 
 We note under the $\Delta G$ action on $G\times G$, that every orbit contains a point of the form $(A,I)$.  Additionally, we recall that e.g., Kerin \cite[Section 1, Equation 9]{Ke2} has shown that under the Riemannian submersion $G\times G\rightarrow G\backslash (G\times G)/ S^1\cong E_{p,q}$, the horizontal space at a point $(A,I)$, after left translation to the identity, is $$\mathcal{H}_A = \{ \widehat{X}: X\in \mathfrak{g} \text{ and } \langle X, Ad_{A}(iP) -  iQ)\rangle_0=0\},$$ where $P = \diag(p_1,p_2,p_3)$ and $Q = \diag(q_1,q_2,q_3).$  We recall that we have chosen the bi-invariant metric on $SU(3)$ whose value at the identity is $\langle X,Y\rangle_0 = -Tr(XY)$.

\begin{proposition}\label{prop:eqns} Suppose $A = (a_{ij})\in SU(3)$ is a point for which there are elements $k = (k_{ij})\in U(2)$ and $0\neq Z\in \mathcal{Z}$ simultaneously satisfy all three of the following conditions:

\begin{equation}\label{eqn:xtrace}\tag{Condition A} \sum_{j=1}^3 |((Ak)_{j1})|^2 p_j = |k_{11}|^2q_1 + |k_{21}|^2q_2 \end{equation}

\begin{equation}\label{eqn:ytrace}\tag{Condition B} \langle Ad_{k}Z,Ad_{A^{-1}}(iP)-iQ\rangle_0 =0 \end{equation}

\begin{equation}\label{eqn:dep}\tag{Condition C}\left\{(Ad_{Ak} Y_1)_\mathfrak{p}, (Ad_{Ak} Z)_{\mathfrak{p}}\right\} \text{ are linearly dependent over }\mathbb{R}.\end{equation}

Then, with respect to the Wilking metric, there is at least one zero-curvature plane in $T_{[A]} E_{p,q}$.

\end{proposition}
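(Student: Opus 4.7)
The plan is to exhibit a horizontal plane at a lift of $[A]$ in $G\times G$ that has zero sectional curvature in the product of Eschenburg metrics and whose $A$-tensor vanishes; O'Neill's formula then delivers a zero-curvature plane in $T_{[A]}E_{p,q}$. The two natural candidates are $V_1 := \widehat{Ad_k Y_1}$ and $V_2 := \widehat{Ad_k Z}$ in $\mathcal{H}_A$. Since these have the form $\widehat{X}$, they are automatically $\Delta G$-horizontal; the remaining perpendicularity to the $S^1$-vertical direction, after unwinding the definition of $\widehat{\cdot}$ and invoking $\operatorname{Ad}$-invariance of $\langle\cdot,\cdot\rangle_0$, is precisely what Conditions~A and~B say via Theorem~\ref{thm:kerineqns}.

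The main step is to show $\sigma := \spa\{V_1, V_2\}$ has zero sectional curvature in the product metric on $G\times G$. Because each factor is non-negatively curved, it suffices to verify that each coordinate projection of $\sigma$ is either degenerate or a zero-curvature plane for the Eschenburg metric on $SU(3)$. The projection built from $Ad_k$-translates simplifies (using that $Ad_k$ commutes with $\phi^{-1}$ and that $\phi^{-1}(\mathcal{Z}) = \mathcal{Z}$) to $\spa\{Ad_k Y_1,\, Ad_k Z'\}$ for some $Z' \in \mathcal{Z}$, and the centralizer argument from the Remark following Theorem~\ref{thm:kerineqns} shows this is a zero-curvature plane. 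The other projection is handled by the Cheeger-deformation zero-curvature criterion used in that same Remark, which demands $[X_1, X_2] = 0$ and $[(X_1)_\mathfrak{k}, (X_2)_\mathfrak{k}] = 0$ for the appropriate $X_i$. The first identity is immediate from $\operatorname{Ad}$-equivariance of the Lie bracket together with $[Y_1, Z] = 0$. For the second, splitting $[X_1, X_2] = 0$ along $\mathfrak{k}\oplus\mathfrak{p}$ and using the grading relations $[\mathfrak{k},\mathfrak{k}]\subseteq\mathfrak{k}$, $[\mathfrak{k},\mathfrak{p}]\subseteq\mathfrak{p}$, $[\mathfrak{p},\mathfrak{p}]\subseteq\mathfrak{k}$ yields $[(X_1)_\mathfrak{k}, (X_2)_\mathfrak{k}] = -[(X_1)_\mathfrak{p}, (X_2)_\mathfrak{p}]$, whose right-hand side vanishes by Condition~C.

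Finally, O'Neill reduces the descent of zero curvature to checking that the $A$-tensor of the $\Delta G\cdot S^1$-submersion vanishes on $\sigma$. Extending $V_1, V_2$ by left-invariance on $G\times G$, it is enough to show that the Lie bracket $[V_1, V_2] \in \mathfrak{g}\oplus\mathfrak{g}$ has no vertical component; in fact the same bracket computations from the previous step, applied simultaneously in both factors, give $[V_1, V_2] = 0$ outright, so both the upstairs zero-curvature and the $A$-tensor vanishing hold together. The principal obstacle is pinning down the precise role of Condition~C: stated alone it looks opaque, but in conjunction with $[Y_1, Z] = 0$ and the $\mathfrak{k}/\mathfrak{p}$-grading on $\mathfrak{su}(3)$ it is exactly what is needed to force the $\mathfrak{k}$-component bracket to vanish, which is the residual hypothesis of the Eschenburg criterion once $[X_1, X_2] = 0$ is already in hand.
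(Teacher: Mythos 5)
Your overall structure mirrors the paper's: lift the plane to a horizontal plane in $G\times G$ spanned by $\widehat{Ad_k Y_1}$ and $\widehat{Ad_k Z}$, use Conditions~A and~B together with Kerin's description of the horizontal space to verify horizontality, and then show each coordinate projection has zero curvature for the Eschenburg metric. Your treatment of the two projections is sound and is essentially the paper's argument: the $Ad_k$-projection reduces to the centralizer example from the remark after Theorem~\ref{thm:kerineqns}, and the $Ad_{Ak}$-projection uses the $\mathfrak{k}/\mathfrak{p}$-grading and Condition~C to force $[(Ad_{Ak}Y_1)_\mathfrak{k},(Ad_{Ak}Z)_\mathfrak{k}]=0$ from $[Ad_{Ak}Y_1,Ad_{Ak}Z]=0$. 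Your computation that $[V_1,V_2]=0$ in $\mathfrak{g}\oplus\mathfrak{g}$ is also correct, though stated rather tersely; it follows from the same grading argument applied to $[\phi^{-1}X_1,\phi^{-1}X_2]$.

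The gap is in your final step. You assert that, extending $V_1,V_2$ by left-invariance, the $A$-tensor vanishes once $[V_1,V_2]$ has no vertical component. That equivalence is not automatic here. The O'Neill formula $A_{X}Y=\tfrac12\,\mathcal{V}[X,Y]$ requires $X,Y$ to be \emph{horizontal vector fields}, and the left-invariant extensions of $V_1,V_2$ are horizontal only at the chosen point $(A^{-1},e)$, not along the fiber (the vertical distribution generated by $\Delta G$ acting on the left is not left-invariant). For a merely left-invariant metric the Levi--Civita connection on left-invariant fields is $\nabla_{V_1}V_2=\tfrac12[V_1,V_2]+U(V_1,V_2)$, where $U$ is the symmetric term from the Koszul formula, so $A_{V_1}V_2=\mathcal{V}\bigl(\tfrac12[V_1,V_2]+U(V_1,V_2)\bigr)$ and one must also control $\mathcal{V}U(V_1,V_2)$. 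This term vanishes for bi-invariant metrics (which is why the Cheeger-deformation lift to $G\times K$ works cleanly), but the product Eschenburg metric on $G\times G$ is only left-invariant. The paper sidesteps exactly this difficulty by citing Tapp's theorem \cite{Ta2} that a horizontal zero-curvature plane in a Riemannian submersion from a Lie group with left-invariant metric necessarily projects to a zero-curvature plane. You should either invoke that result, or lift one level higher to the bi-invariant picture on $(G\times K)\times(G\times K)$ where the bracket-only $A$-tensor argument is legitimate.
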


\begin{proof}   Observe that under the Riemannian submersion $G\times G\rightarrow G\backslash (G\times G)/S^1\cong E_{p,q}$, the point $(A^{-1},e)$ maps to $[A]$.  We will construct a horizontal zero-curvature plane in $T_{(A^{-1},e)} G\times G$.  By a result of Tapp \cite{Ta2}, such a plane must  project to a zero-curvature plane in $T_{[A]} E_{p,q}$.

Consider first the vector $Ad_k Y_1 \in \mathfrak{g}$.  From Theorem \ref{thm:kerineqns}, \ref{eqn:xtrace} is equivalent to the condition that $\langle Ad_k Y_1, Ad_{A^{-1}}(iP) - iQ\rangle_0 = 0$.  In particular, $\widehat{Ad_k Y_1}\in \mathcal{H}_{A^{-1}}$.  Likewise, \ref{eqn:ytrace} implies that $\widehat{Ad_k Z}\in \mathcal{H}_{A^{-1}}$.  Thus, the plane $\hat{\sigma}  = \operatorname{span}\{\widehat{Ad_k Y_1}, \widehat{Ad_k Z}\}$ is, after left translating to $(A^{-1},e)$, a horizontal plane at $(A^{-1},e)$.  It remains to determine its curvature.

Because the metric on $G\times G$ is a product of Eschenburg metrics, clearly $\hat{\sigma}$ has zero-curvature if and only if its two projections $$\hat{\sigma_1} = \{-\phi^{-1}(Ad_{Ak} Y_1), -\phi^{-1}(Ad_{Ak} Z)\}$$ and  $$\hat{\sigma_2} = \{ \phi^{-1}(Ad_k Y_1), \phi^{-1}(Ad_k Z)\}$$ both have zero-curvature with respect to the Eschenburg metric.

We begin with the easier case of $\hat{\sigma_2}$.  Eschenburg \cite{Es3} has shown that with respect to the Eschenburg metric, $\hat{\sigma_2}$ has zero sectional curvature if and only if $[Ad_k Y_1, Ad_k Z] = 0$ and  $[(Ad_k Y_1)_{\mathfrak{k}}, (Ad_k Z)_{\mathfrak{k}}] = 0$.  For the first equation, we find $[Ad_k Y_1, Ad_k Z] = [Y_1 , Z ] = 0$.  For the second, we note that $(Ad_k Z)_{\mathfrak{k}} = \beta Y_3$.  In particular, it is in the centralizer of $\mathfrak{k}$, so $[(Ad_k Y_1)_\mathfrak{k}, (Ad_k Z)_\mathfrak{k}] = 0$.  Thus, $\hat{\sigma}_2$ has zero-curvature.

We finally turn attention to $\hat{\sigma}_1$.  As in the previous case, we need show that $[Ad_{Ak} Y_1, Ad_{Ak} Z] = 0$ and $[(Ad_{Ak} Y_1)_\mathfrak{k}, (Ad_{Ak} Z)_\mathfrak{k}] = 0$.  For the first, we have $$[Ad_{Ak}Y_1, Ad_{Ak} Z] = [Y_1, Z] = 0.$$

For the second condition, we argue as follows.  First observe that $[\mathfrak{k},\mathfrak{k}]\subseteq \mathfrak{k}$, $[\mathfrak{p},\mathfrak{p}] \subseteq \mathfrak{k}$, and $[\mathfrak{k},\mathfrak{p}]\subseteq \mathfrak{p}$.  Then, since $[Ad_{Ak} Y_1, Ad_{Ak} Z]= 0$, it follows that $$0= [Ad_{Ak} Y_1, Ad_{Ak} Z]_{\mathfrak{k}} = [(Ad_{Ak} Y_1))_\mathfrak{k}, (Ad_{Ak} Z))_\mathfrak{k}] + [(Ad_{Ak} Y_1))_\mathfrak{p}, (Ad_{Ak} Z))_\mathfrak{p}].$$  Noting that \ref{eqn:dep} implies $[(Ad_{Ak} Y_1))_\mathfrak{p}, (Ad_{Ak} Z))_\mathfrak{p}] = 0$, we conclude that $[(Ad_{Ak} Y_1))_\mathfrak{k}, (Ad_{Ak} Z))_\mathfrak{k}] = 0$ as well.  Thus, $\hat{\sigma}_1$ has zero-curvature as well.

\end{proof}

We now specialize to Eschenburg spaces $E_{p,q}$ with $p = (0,0,q_1+q_2+q_3)$ and $q = (q_1, q_2, q_3)$ where the $q_i$ are relatively prime integers.  We identify $S^1$ with $\{(\diag(1,1,z^{q_1 + q_2 + q_3}), \diag(z^{q_1}, z^{q_2}, z^{q_3})):z\in S^1\subseteq \mathbb{C}\} \subseteq U(3)\times U(3)$.  Then, the normalizer of $S^1\subseteq U(3)\times U(3)$ contains $U(2)\times T^2\subseteq U(3)\times U(3)$, where $U(2) = K\subseteq SU(3)$, and $T^2$ is the maximal torus of diagonal matrices in $SU(3)$.  Noting that $U(2)\times T^2\subseteq K\times K$, this implies that with respect to the Wilking metric on $E_{p,q} = SU(3)/S^1$, that left multiplication by $U(2)$ and right multiplication by $T^2$ are isometries.

\begin{proposition}\label{prop:F}  Suppose $E_{p,q}$ is equipped with a Wilking metric with $p = (0,0,q_1 + q_2 + q_3)$ and $q = (q_1,q_2,q_3)$.  Then the isometry group acts on $E_{p,q}$ with cohomogeneity at most two and every orbit passes through a point $[A]\in E_{p,q}$ where $A\in \mathcal{F}$ and where the subset $\mathcal{F}\subseteq SU(3)$ is defined as $$\mathcal{F} = \left\{ \begin{bmatrix} \cos(\alpha) & -\sin(\alpha) & 0\\ \cos(\theta)\sin(\alpha) & \cos(\theta) \cos(\alpha) & -\sin(\theta)\\ \sin(\theta)\sin(\alpha)  & \sin(\theta)\cos(\alpha) & \cos(\theta) \end{bmatrix}\in SU(3):\alpha,\theta\in[0,2\pi]\right\}.$$
\end{proposition}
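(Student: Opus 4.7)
The plan is to reduce both assertions to a single statement: every orbit of the $U(2)\times T^2\times S^1$-action on $SU(3)$ passes through a point of $\mathcal{F}$. The cohomogeneity bound follows immediately, since $\mathcal{F}$ is a two-parameter family, so the orbit space of $U(2)\times T^2$ on $E_{p,q}$ has dimension at most two, bounding the cohomogeneity of the full isometry group as well.

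The first step is to pin down the combined action, which takes the form $A\mapsto \tilde B\, A\,\tilde T^{-1}$ with $\tilde B=\operatorname{diag}(B_0,\lambda)$, $B_0\in U(2)$, and $\tilde T=\operatorname{diag}(u_1,u_2,u_3)$, subject only to $\det\tilde B=\det\tilde T$. Because the Eschenburg $S^1$-parameter $z$ makes the common determinant $z^{q_1+q_2+q_3}$ an arbitrary element of $S^1$ (the case $q_1+q_2+q_3=0$ is the Aloff-Wallach case, which is homogeneous and thus immediate), I would verify that $(u_1,u_2,u_3)$ ranges freely over $(S^1)^3$ while $B_0$ ranges freely over $U(2)$, with $\lambda=\overline{\det B_0}\,u_1u_2u_3$ then determined.

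Given $A\in SU(3)$, I will choose $B_0\in U(2)$ so that $B_0(A_{13},A_{23})^{\top}=(0,-r)^{\top}$ where $r=\sqrt{|A_{13}|^2+|A_{23}|^2}$; this kills $A_{13}$ and makes $A_{23}$ real non-positive. Setting $u_3=1$ preserves realness of $A_{23}$, and a single phase choice for $u_1u_2$ then makes $A_{33}$ real via the scaling $A_{33}\mapsto \lambda A_{33}$. The third column of $A$ is now $(0,-\sin\theta,\cos\theta)^{\top}$ for some $\theta$, and orthonormality of the columns together with $\det A=1$ force the block form
\[ A=\begin{pmatrix} a & b & 0 \\ \cos\theta\,p & \cos\theta\,q & -\sin\theta \\ \sin\theta\,p & \sin\theta\,q & \cos\theta \end{pmatrix}, \qquad M:=\begin{pmatrix} a & b \\ p & q \end{pmatrix}\in SU(2). \]
The residual freedom $(\mu,u_1,u_2)\in U(1)^3$, constrained only so that row $3$ stays real, acts on $M$ by simultaneous left and right diagonal multiplication by phases. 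Taking $u_1,u_2$ to be the phases of $p,q$ respectively makes the second row of $M$ real non-negative; the $SU(2)$ identities $q=\overline a$, $p=-\overline b$ then automatically force the first row to be real as well, so $M$ becomes a rotation matrix $R(\alpha)$ and the resulting matrix lies in $\mathcal{F}$.

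The main obstacle is the bookkeeping of the coupled phase parameters, in particular verifying that absorbing the $S^1$ into the column phases genuinely removes the $\det\tilde T=1$ constraint without using up freedom that I need for the subsequent steps, and confirming that the conjugation identities in $SU(2)$ couple the two rows of $M$ in just the right way. Degenerate cases (e.g., $r=0$, $\cos\theta=0$, or one of $a,b,p,q$ vanishing) will need brief separate arguments, but each should reduce to an easier special case handled by the same strategy.
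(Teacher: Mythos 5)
Your proposal is correct in outline but takes a genuinely different route from the paper, and the bookkeeping you flag is real. The paper's proof is considerably slicker: it factors the reduction through the $U(2)$-principal bundle $\pi\colon SU(3)\to\mathbb{C}P^2$ sending $A$ to the line spanned by its last row. Since two matrices with the same $\pi$-image are automatically in the same left-$U(2)$-orbit, the whole problem collapses to showing that every $T^2$-orbit in $\mathbb{C}P^2$ under the induced action $(z,w)\ast[z_1:z_2:z_3]=[\bar z z_1:\bar w z_2:zwz_3]$ contains a point with real homogeneous coordinates, which is a short explicit calculation with a cube root. That packaging avoids almost all of the phase bookkeeping that your direct normalization requires: it never needs the $SU(2)$ conjugation identities, never needs to track how $\lambda=\overline{\det B_0}\det\tilde T$ propagates to the third row, and never needs to peel off degenerate cases such as $r=0$ or $\cos\theta=0$. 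A second difference worth noting: you fold in the Eschenburg $S^1$ so that $(u_1,u_2,u_3)$ ranges over all of $T^3$, which forces a separate detour when $q_1+q_2+q_3=0$. This is unnecessary. The paper shows that the $U(2)\times T^2$-action on $SU(3)$ alone already carries every matrix into $\mathcal{F}$; in your setup, setting $t_3=1$ within $T^2$ forces $t_1t_2=1$, so $\lambda=\overline{\det B_0}$, and the phase of $A_{33}$ can still be fixed using the residual diagonal $U(1)\subset U(2)$ stabilizing the normalized third column. So the appeal to $S^1$ and the homogeneous-case dodge can both be deleted.

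One substantive point to tighten: you assert that after making the second row of $M$ real non-negative, ``the $SU(2)$ identities $q=\bar a$, $p=-\bar b$ then automatically force the first row to be real as well.'' As written this is not automatic, because the first row of the matrix also picks up the factor $\mu$ from the $(1,1)$-slot of $\tilde B$, which is not yet pinned down. What saves it is your constraint that row $3$ stays real, which forces $\lambda=\bar\mu u_1u_2=\pm1$ and hence $\mu=\pm u_1u_2$; feeding that back into the first row together with the $SU(2)$ identities then does make it real. You should make that chain explicit rather than call it automatic. With that and the degenerate-case cleanup done, your argument goes through, but if you want a shorter writeup the $\mathbb{C}P^2$ reduction is the way to go.
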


\begin{proof}  The fact that the action is cohomogeneity at most two is well-known \cite[Section 3]{GroveShankarZiller}.   So, we need only show that every orbit passes through a point in the image of $\mathcal{F}$.

Recall that we have a $U(2)$-principal bundle $\pi:SU(3)\rightarrow \mathbb{C}P^2$ given by mapping $A = (a_{ij})\in SU(3)$ to its last row.  It follows that two elements in $SU(3)$ with the same image in $\mathbb{C}P^2$ are $K$-equivalent.  Since the $\{I\}\times T^2\subseteq U(2)\times T^2$ action commutes with the $U(2)\times \{(1,1)\}\subseteq U(2)\times T^2$ action, $\pi$ induces a $T^2$ action on $\mathbb{C}P^2$.  The proof will be complete if we can show that every $T^2$ orbit in $\mathbb{C}P^2$ intersects $\pi(\mathcal{F})$.  In fact, since clearly any unit length element of $\mathbb{R}^3$ has the form $(\sin(\theta)\sin(\alpha), \sin(\theta)\cos(\alpha), \cos(\theta))$, it is sufficient to show that every point in $\mathbb{C}P^2$ is $T^2$-equivalent to one with all coordinates real.

Parametrizing $T^2\subseteq SU(3)$ as $\diag(z,w,\overline{z}\overline{w})$, the induced $T^2$ action on $\mathbb{C}P^2$ is given by $$(z,w)\ast [z_1:z_2:z_3] = [\overline{z}z_1:  \overline{w}z_2: zwz_3].$$  If any $z_i = 0$, then we may obviously find a $T^2$-equivalent point whose coordinates are all real.  If all three $z_i$ are non-zero, choose $u$ with $u^3 = \frac{z_1 z_2 z_3}{|z_1 z_2 z_3|}$.  Let $z=\frac{\overline{u}z_1}{|z_1|}$ and $w=\frac{\overline{u}z_2}{|z_2|}$.

Then it is easy to verify that $(z,w)\ast[z_1:z_2:z_3] = (z,w)\ast[\overline{u} z_1: \overline{u}z_2: \overline{u} z_3]$ has all coordinates real, completing the proof.
\end{proof}

We have the following corollary.

\begin{corollary}\label{cor:sameorbit} Two matrices $A = (a_{ij}), B= (b_{ij}) \in SU(3)$ are in the same $U(2)\times T^2$ orbit if and only if $|a_{3i}| = |b_{3i}|$ for all $i\in \{1,2,3\}$.

\end{corollary}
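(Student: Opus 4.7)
The plan is to check both directions, the forward one by inspection and the reverse one in two stages. The forward direction is immediate from how the two actions affect the third row: left multiplication by $\diag(k', \overline{\det k'}) \in U(2)$ scales that row by the unit-modulus scalar $\overline{\det k'}$, while right multiplication by $\diag(z, w, \overline{z}\overline{w}) \in T^2$ sends $(a_{31}, a_{32}, a_{33})$ to $(z\, a_{31}, w\, a_{32}, \overline{z}\overline{w}\, a_{33})$; both operations preserve the three moduli $|a_{3i}|$.

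For the reverse direction, I would first produce a matrix $A' \in U(2) \cdot A \cdot T^2$ whose third row coincides with that of $B$ entry by entry. Writing $a_{3i} = e^{i\alpha_i}|a_{3i}|$ and $b_{3i} = e^{i\beta_i}|b_{3i}|$ and using the assumption $|a_{3i}| = |b_{3i}|$, this reduces to finding $u, z, w \in S^1$ satisfying $uz = e^{i(\beta_1 - \alpha_1)}$, $uw = e^{i(\beta_2 - \alpha_2)}$, and $u\overline{z}\overline{w} = e^{i(\beta_3 - \alpha_3)}$, where $u = \overline{\det k'}$ comes from the left $U(2)$ action and $(z,w)$ parametrises $T^2$. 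Multiplying the three equations forces $u^3 = e^{i\sum_j(\beta_j - \alpha_j)}$; choosing any such cube root determines $z$ and $w$ from the first two equations and makes the third hold automatically. This is exactly the cube-root trick appearing in the proof of Proposition \ref{prop:F}, and it degenerates harmlessly when some $|a_{3i}|$ vanishes, since then the corresponding phase equation is vacuous.

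Next, I would show that any two elements $A', B \in SU(3)$ sharing a third row differ by an element of the embedded $U(2)$. Setting $M := B(A')^{-1} \in SU(3)$ gives $MA' = B$, and equating the third rows together with the linear independence of the rows of $A'$ forces $M_{31} = M_{32} = 0$ and $M_{33} = 1$; unitarity of $M$ then forces $M_{13} = M_{23} = 0$ as well. Hence $M = \diag(M', 1)$ with $M' \in SU(2)$, which lies in the embedded $U(2) \subseteq SU(3)$, so $B = MA' \in U(2) \cdot A \cdot T^2$, completing the argument. The only substantive bookkeeping lies in the phase-matching of stage one, and because there are three $S^1$ parameters against three phase equations whose product is a consequence of the other two once $u^3$ is fixed, this reduces to the divisibility of $S^1$ and presents no real obstacle.
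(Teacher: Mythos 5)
Your proof is correct, and it takes a somewhat different route from the paper's. The paper deduces the corollary by appealing to Proposition \ref{prop:F}: it passes through the principal bundle $\pi:SU(3)\to\mathbb{C}P^2$, argues that orbit-equivalence under $U(2)\times T^2$ is the same as $T^2$-equivalence of $\pi(A),\pi(B)$, brings both to the real slice $[\sin\theta\sin\alpha:\sin\theta\cos\alpha:\cos\theta]$, reads off that the moduli condition forces equality up to signs, and then uses the elements $(\pm1,\pm1)\in T^2$ to absorb the signs. Your argument stays at the level of $SU(3)$ throughout: you first use the scalar $u=\overline{\det k'}$ from the left $U(2)$-action together with $(z,w)\in T^2$ to match the third rows of $A$ and $B$ entry by entry (via the same cube-root trick the paper uses inside the proof of Proposition \ref{prop:F}), and then observe via $M=B(A')^{-1}$ that two special unitary matrices with identical third rows differ by $\diag(M',1)$ with $M'\in SU(2)$, hence by an element of the embedded $U(2)$. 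This avoids both the detour through $\mathbb{C}P^2$ and the sign bookkeeping, at the cost of one short unitarity computation; it is arguably cleaner and entirely self-contained. One small thing worth making explicit rather than waving at: when some $|a_{3i}|=0$ the corresponding phase equation disappears, and since you then have strictly more freedom (three parameters for at most two constraints), solvability is immediate—just choose, say, $u=1$ and solve the remaining one or two equations for $z,w$ directly.
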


\begin{proof}  Given the block form of $U(2)\subseteq SU(3)$, the action of $U(2)\times T^2$ on the bottom row of a matrix simply multiplies it by various unit length complex numbers.  In particular, if $A$ and $B$ are in the same orbit, that we must have $|a_{3i}| = |b_{3i}|$ for all $i\in \{1,2,3\}$.

For the converse, assume that $|a_{3i}| = |b_{3i}|$ for all $i\in \{1,2,3\}$.  From the proof of Proposition \ref{prop:F}, we know that $A$ and $B$ are orbit-equivalent if $\pi(A),\pi(B)\in \mathbb{C}P^2$ are equivalent under the $T^2$-action.  Again from the proof of Proposition \ref{prop:F}, we see that under the $T^2$-action, $\pi(A)$ is equivalent to a point of the form $[\sin(\theta_A)\sin(\alpha_A), \sin(\theta_A)\cos(\alpha_A),\cos(\theta_A)]\in \mathbb{C}P^2$ and that similarly $\pi(B)$ is $T^2$-equivalent to $[(\sin(\theta_B)\sin(\alpha_B), \sin(\theta_B)\cos(\alpha_B),\cos(\theta_B)]\in \mathbb{C}P^2.$  Observe that $|\cos(\theta_A)| = |a_{33}| = |b_{33}| = |\cos(\theta_B)|$ so that $\cos(\theta_A) = \pm \cos(\theta_B)$ .  In an analogous fashion, we see that $\sin(\theta_A)\sin(\alpha_A) = \pm \sin(\theta_B)\sin(\alpha_B)$ and that $\sin(\theta_A)\cos(\alpha_A) = \pm \sin(\theta_B)\cos(\alpha_B)$.

To get the signs to agree, we note that element $(-1,1)\in T^2$ transforms the point $[\sin(\theta_A)\sin(\alpha_A), \sin(\theta_A)\cos(\alpha_A), \cos(\theta_A)]$ to $[-\sin(\theta_A)\sin(\alpha_A), \sin(\theta_A)\cos(\alpha_A), -\cos(\theta_A)]$ which is the same point as $ [\sin(\theta_A)\sin(\alpha_A), -\sin(\theta_A)\cos(\alpha_A), \cos(\theta_A)].$  That is, $(-1,1)$ acts by changing the sign of the middle coordinate.  Similarly, one sees that $(1,-1)$ changes the sign of the first coordinate, while $(-1,-1)$ changes the sign of the last coordinate.  Clearly, by using using these elements, one may transform $[\sin(\theta_A)\sin(\alpha_A), \sin(\theta_A)\cos(\alpha_A), \cos(\theta_A)]$ to $[\sin(\theta_B)\sin(\alpha_B), \sin(\theta_B)\cos(\alpha_B), \cos(\theta_B)]$, completing the proof.

\end{proof}

We now isolate the choices of $(q_1,q_2,q_3)$ for which the corresponding Eschenburg spaces are not already know to have a metric of strictly or almost positive curvature.  We observe that via Proposition \ref{prop:nochange} we may assume that $q_1 + q_2 + q_3 \geq 0$ and that $q_1 \geq q_2$.

\begin{proposition}\label{prop:choices}  Suppose $q=(q_1,q_2,q_3)$ with $q_1, q_2$ and $q_3$ pairwise relatively prime integers for which $q_1 + q_2 + q_3 \geq 0$ and $q_1 \geq q_2$.  Let $p = (0,0,q_1+q_2+q_3)$.  If $E_{p,q}$ is not diffeomorphic to a known example with positive or almost positive sectional curvature, then all of the following must occur:

\begin{enumerate} \item[1a)] $q_1 + q_2 + q_3 > 0$

\item[1b)] $q_1  >0$

\item[1c)]  $q_1 > q_2$

\item[1d)] $q_2 q_3 <0$

 and at least one of the following must occur:

\item[2a)]  $q_2 + q_3\geq 0$

\item[2b)]  $q_2 < 0$ and $q_1 + q_2\geq 0$

\item[2c)]  $q_3 < 0$ and $q_1 + q_3 > 0$
\end{enumerate}

\end{proposition}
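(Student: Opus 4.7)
The plan is to prove the contrapositive: I will show that if any of 1a)-1d) fails, or if all of 2a)-2c) fail simultaneously while 1a)-1d) still hold, then $E_{p,q}$ is diffeomorphic to a known example. The known examples fall into three families: a positively curved Eschenburg space identified via Theorem \ref{thm:esc} after possibly applying the modifications of Proposition \ref{prop:nochange}; the exceptional Aloff-Wallach space with $(p,q)=((0,0,0),(1,0,-1))$; or the exceptional cohomogeneity-one space with $(p,q)=((0,1,1),(0,0,2))$. Under the standing hypothesis $p=(0,0,q_1+q_2+q_3)$ with $q_1+q_2+q_3\geq 0$, Theorem \ref{thm:esc} reduces to verifying that each $q_i$ lies outside $[0,q_1+q_2+q_3]$.

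If 1a) fails, then $q_1+q_2+q_3=0$ and $p=(0,0,0)$, so $E_{p,q}$ is Aloff-Wallach, hence known by \cite{AW, Wi}. If 1b) fails, then $q_1\leq 0$; combined with $q_1\geq q_2$ and the sum being $\geq 0$, this forces $q_1,q_2\leq 0\leq q_3$ with $q_3\geq -q_1-q_2$. When $q_1<0$, each $q_i$ sits strictly outside $[0,q_1+q_2+q_3]$ and Theorem \ref{thm:esc} yields positive curvature; when $q_1=0$, pairwise coprimality collapses the triple to $(0,-1,1)$ and we are in the exceptional Aloff-Wallach case. If 1c) fails, i.e., $q_1=q_2$, then coprimality forces $|q_1|=1$; by Proposition \ref{prop:nochange} I may normalize to $q_1=q_2=1$, and a finite case analysis on $q_3\geq -2$ either reduces to step 1a, produces positive curvature after swapping $(p,q)\mapsto (q,p)$, or matches the exceptional cohomogeneity-one triple after additional modifications.

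For 1d) failing, $q_2 q_3\geq 0$, I split by signs. Both $q_2,q_3>0$: all entries of $q$ are positive, so swapping $(p,q)$ makes the new $p=(q_1,q_2,q_3)$ satisfy $\underline{p}\geq 1$ and $\overline{p}<q_1+q_2+q_3$, so Theorem \ref{thm:esc} applies. Both $q_2,q_3<0$: the sum constraint forces $q_1>|q_2|+|q_3|$ and each $q_i$ again lies outside $[0,\overline{p}]$. Boundary cases where exactly one of $q_2,q_3$ vanishes force, via coprimality, one of the small triples $(1,0,1),(1,0,-1),(1,-1,0)$, each identifiable with one of the two exceptional examples. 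Finally, in the residual situation where 1a)-1d) all hold but 2a)-2c) all fail, I use 1d) to split into Case E1 ($q_2<0<q_3$) and Case E2 ($q_2>0,\ q_3<0$). In Case E1 the failures of 2a), 2b) give $q_3<|q_2|$ and $q_1<|q_2|$; together with $q_1+q_2+q_3>0$ these strict inequalities yield $q_1,q_3>\overline{p}$ and $q_2<\underline{p}=0$, so Theorem \ref{thm:esc} again gives positive curvature. In Case E2 the failures of 2a), 2c) give $q_2<|q_3|$ and $q_1\leq|q_3|$; the equality $q_1=|q_3|$ would force $q_1=-q_3=1$ by $\gcd(q_1,q_3)=1$, contradicting $q_1>q_2\geq 1$, so the inequality is strict and positive curvature follows as before.

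The main obstacle I anticipate is bookkeeping: each subcase requires choosing the correct combination of modifications from Proposition \ref{prop:nochange}—and, in the shared-eigenvalue boundary situations, invoking Proposition \ref{prop:equalpq}—before Theorem \ref{thm:esc} becomes applicable or the exceptional form becomes manifest. The delicate points are precisely the boundary cases where an inequality degenerates to equality or some $q_i$ vanishes; there, pairwise coprimality must be leveraged either to rule the boundary out or to identify the triple as one of the two exceptional examples.
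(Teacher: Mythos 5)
Your proposal is correct and follows essentially the same contrapositive, case-by-case strategy as the paper. The one substantive organizational difference is in the treatment of $q_1 = q_2$: the paper dispatches this case immediately by appealing to the classification of cohomogeneity-one Eschenburg spaces (Wulle), whereas you do a hands-on case analysis after normalizing to $q_1=q_2=1$. Be aware that "a finite case analysis on $q_3\geq -2$" is slightly imprecise, since $q_3$ ranges over all integers $\geq -2$; what actually happens is a finite check for $q_3\in\{-2,-1,0\}$ together with a single uniform argument (swap and apply Theorem \ref{thm:esc}) for all $q_3\geq 1$. The paper's route is shorter here, while yours is self-contained. Everything else matches the paper's reasoning, including the crucial coprimality argument ruling out the boundary $q_1+q_3=0$. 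One tiny omission: in the boundary analysis where exactly one of $q_2,q_3$ vanishes, the triple $(1,1,0)$ should also appear in your list, but since it has $q_1=q_2$ it is already disposed of in the 1c case, so this is harmless.
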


\begin{proof} As mentioned above, we assume that $q_1 + q_2 + q_3 \geq 0$ and $q_1\geq q_2$.  If $q_1 + q_2 + q_3 = 0$, then the resulting Eschenburg space is homogeneous.  Apart from Wilking's almost positively curved example \cite{Wi}, these homogeneous spaces admit homogeneous metric of strictly positive sectional curvature \cite{AW}.  Thus, we may assume $q_1 + q_2 + q_3 > 0$.  Further, if $q_1 = q_2$, the the resulting Eschenburg space admits a cohomogeneity one action.  In particular, it is either diffeomorphic to a positively curved example or to Kerin's almost positively curved example \cite{Ke1}.  Thus, we may assume $q_1 > q_2$.  In addition, if any $q_i = 0$, then the fact that the $q_i$ are relatively prime implies that $(q_1,q_2,q_3)$ is a permutation of $(1,1,0)$ or $(1,-1,0)$.  The first case gives an Eschenburg space diffeomorphic to Kerin's almost positively curved example via the diffeomorphism swapping $p$ and $q$, while the second case again gives Wilking's almost positively curved example.  Thus, we may assume that all three $q_i$ are non-zero.

If all three $q_i$ are positive, it is easy to see that $0\notin [\min\{q_i\}, \max\{q_i\}]$ and $q_1 + q_2 + q_3 \notin [\min\{q_i\}, \max\{q_i\}]$.  In particular, by Theorem \ref{thm:esc}, the resulting Eschenburg space is diffeomorphic to a positively curved example.

Thus, we may assume that at least one $q_i < 0$.  If $q_1 < 0$, the condition $q_1\geq q_2$ implies $q_2 < 0$ as well.  Then the condition $q_1 + q_2 + q_3 > 0$ implies that $q_3 > q_1+ q_2 + q_3 > 0$.  From this, it follows easily that $q_i\notin [0, q_1 + q_2 + q_3]$ for all $i$.  In particular, these Eschenburg spaces are known to admit strictly positively curved metrics by Theorem \ref{thm:esc}.   Thus, we may assume $q_1 > 0$.  If both $q_2$ and $q_3$ are negative, we now see that $q_i\notin [0,q_1+q_2+q_3]$ for all $i$, so, again from Theorem \ref{thm:esc}, the resulting Eschenburg space is positively curved.  Thus, we have $q_2 q_3 < 0$, completing the verification of $1a)$ through $1d)$.

We now assume that $2a)$ does not occur and show that least one of $2b)$ and $2c)$ must occur.  Because $q_2 + q_3 < 0$, $q_1\notin [0,q_1+q_2+q_3]$.  Thus, to have a new example, we must have $q_i\in[0,q_1 + q_2 + q_3]$ for at least one $i\in \{2,3\}$.  If this is true for $i=2$, we find $q_2 \geq 0$ (which, since $q_2\neq 0$, implies that $q_2 > 0$ and $q_3 < 0$), and $q_2 \leq q_1 + q_2 + q_3$.  In other words, we must have $q_3 < 0$ and $0 \leq q_1 + q_3$.  The case where $q_3\in [0, q_1 + q_2 + q_3]$ is analogous.

It remains to see that the case $q_3 < 0$ with $q_1 + q_3 = 0$ cannot occur.  To see this, observe that if $q_1 + q_3 = 0$, then since $q_1$ and $q_3$ are relatively prime, we must have $q_1 = 1$ and $q_3 = -1$.  Since $q_2$ and $q_3$ have opposite signs, we have the contradiction $1 = q_1> q_2 > 0$.

\end{proof}

\section{Determining the curvature of the Eschenburg metrics}\label{sec:ecurvature}

In this section, we prove Theorem \ref{thm:main}.  We divide up the proof depending on the nature of the six possible products $(p_{\sigma(1)}-q_1)(p_{\sigma(2)}-q_2)$ for $\sigma\in S_3$.

\subsection{\texorpdfstring{At least one $(p_{\sigma(1)}-q_1)(p_{\sigma(2)}-q_2) > 0$}{At least one of product > 0}}\label{sec:pos}.

In this section, we will prove the first three cases of Theorem \ref{thm:main}, when at least one $(p_{\sigma(1)}-q_1)(p_{\sigma(2)}-q_2) > 0$.  Because permuting the $p_i$ is an isometry (Proposition \ref{prop:nochange}), we may assume that $(p_1-q_1)(p_2-q_2) > 0$. We begin with \cite[Theorem 2.3]{Ke1} which asserts that in this case, there is at least one point of positive curvature.  

\begin{theorem}[Kerin]\label{thm:quasi}  Suppose $E_{p,q}$ is an Eschenburg space with the Eschenburg metric and that $(p_1-q_1)(p_2-q_2) > 0$.  Then for any diagonal matrix $A \in SU(3)$,  all $2$-planes in $T_{[A]} E_{p,q}$ have positive sectional curvature.

\end{theorem}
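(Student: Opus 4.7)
The plan is to prove the contrapositive: show that no horizontal 2-plane at a diagonal $A \in SU(3)$ has zero sectional curvature with respect to the Eschenburg metric; by O'Neill's formula this forces every 2-plane at $[A] \in E_{p,q}$ to be positively curved. The key reduction is Proposition \ref{prop:esch}, which says that any zero-curvature 2-plane $\sigma \subseteq \mathfrak{g}$ must contain either $Y_3 = i\diag(1,1,-2)$ or some $Ad_k Y_1$ with $k \in K = U(2)$. Left-translating a hypothetical horizontal zero-curvature plane at $A$ back to the identity (using that the Eschenburg metric is left $SU(3)$-invariant), I obtain such a plane $\sigma \subseteq \mathfrak{g}$, and since $\sigma$ is horizontal at $A$, every vector it contains is horizontal at $A$. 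In particular, one of $Y_3$ or $Ad_k Y_1$ must satisfy the corresponding horizontality identity from Theorem \ref{thm:kerineqns}. The plan is to rule out both possibilities for diagonal $A$.

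First I would dispatch the $Y_3$ case. Writing $A = \diag(a_1,a_2,a_3)$ with $|a_i| = 1$ so that $|a_{j3}|^2 = \delta_{j3}$, equation \eqref{eqn:2.3} collapses to $p_3 = q_3$. Combined with the admissibility constraint $\sum p_i = \sum q_i$, this gives $(p_1 - q_1) + (p_2 - q_2) = 0$, so the two differences are negatives of each other. But the hypothesis $(p_1 - q_1)(p_2 - q_2) > 0$ forces them to be nonzero of the same sign, which is a direct contradiction. So $Y_3$ is never horizontal at a diagonal $A$.

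Next I would handle the $Ad_k Y_1$ case by exploiting the block structure of the embedding $K = U(2) \hookrightarrow SU(3)$ given by $k' \mapsto \diag(k', \overline{\det k'})$: every $k \in K$ has $k_{31} = k_{32} = 0$, and the unitarity of the upper $2 \times 2$ block gives $|k_{11}|^2 + |k_{21}|^2 = 1$. Since $A$ is diagonal, $(Ak)_{j1} = a_j k_{j1}$ for $j = 1, 2$ and $(Ak)_{31} = 0$, so equation \eqref{eqn:2.4} simplifies to
\[
|k_{11}|^2 (p_1 - q_1) + |k_{21}|^2 (p_2 - q_2) = 0.
\]
The left side is a convex combination of $p_1 - q_1$ and $p_2 - q_2$, each of which is nonzero with the same sign by hypothesis; such a combination cannot vanish. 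Hence no $Ad_k Y_1$ is horizontal at $A$ either, completing the contradiction.

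There is really no single hard step here: the heavy machinery has been packaged into Proposition \ref{prop:esch} and Theorem \ref{thm:kerineqns}, and once invoked, both cases reduce to short arithmetic checks. The only subtlety worth flagging is verifying that the lifted horizontal plane at $A$, after left translation, remains zero-curvature (so that Proposition \ref{prop:esch} applies) and remains horizontal in the sense of left-invariant vector fields (so that Theorem \ref{thm:kerineqns} applies); both follow from left $SU(3)$-invariance of the Eschenburg metric.
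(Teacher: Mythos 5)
Your proof is correct and follows essentially the same route as the paper's: reduce via Proposition~\ref{prop:esch}, O'Neill, and Theorem~\ref{thm:kerineqns} to showing that for diagonal $A$ neither \eqref{eqn:2.3} nor \eqref{eqn:2.4} can hold, and then observe that $p_3=q_3$ forces $(p_1-q_1)=-(p_2-q_2)$, while \eqref{eqn:2.4} becomes a convex combination $|k_{11}|^2(p_1-q_1)+|k_{21}|^2(p_2-q_2)=0$ of two numbers of the same sign. Both contradictions match the paper's argument exactly.
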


\iffalse \begin{proof}  By Proposition \ref{prop:kerineqns}, it is sufficient to verify that for diagonal $A$, that neither equation \eqref{eqn:2.3} nor equation \eqref{eqn:2.4} have solutions.

With this choice of $A$, equation \eqref{eqn:2.3} reduces to $p_3 = q_3$.  However, this cannot occur.  For, if it does, then the equality $\sum p_i = \sum q_i$ then implies that $p_1 + p_2 = q_1 + q_2$.  Thus $p_1 - q_1 = -(p_2 - q_2)$, which implies that $(p_1-q_1)(p_2-q_2) \leq 0$, giving a contradiction.

Next, assume for a contradiction that there is a $k\in U(2)$ for which equation \eqref{eqn:2.4} has a solution.  With $A$ diagonal, this becomes $$|k_{11}|^2 p_1 + |k_{21}|^2 p_2 = |k_{11}|^2 q_1 + |k_{21}|^2 q_2.$$  Rearranging, we obtain \begin{equation}|k_{11}|^2(p_1 - q_1) + |k_{21}|^2(p_2 - q_2)=0.\end{equation}\label{eqn:pos}  Since $(p_1-q_1)(p_2-q_2) > 0$, $(p_1-q_1)$ and $(p_2-q_2)$ have the same sign.  Thus, the equation \eqref{eqn:pos} is a sum of two terms of the same sign, so can only be zero if each term is.  Thus, we find that $k_{11} = k_{21} = 0$, which contradicts the fact that $k\in U(2)$.

\end{proof}\fi

Of course, if all six products are positive, it is well-known that the resulting metric is positively curved \cite{Es}, so we will assume at least one product is non-positive.  We will break into two cases depending on whether there is a negative product or not.

\begin{proposition}\label{prop:KerinExample}  Suppose $(p_{\sigma(1)}-q_1)(p_{\sigma(2)}-q_2) \geq 0$ for all $\sigma\in S_3$.  Assume additionally that at least one product is positive and at least one product is zero.  Then, up to the isometric modifications in Proposition \ref{prop:nochange}, $p = (0,1,1)$ and $q = (0,0,2)$.

\end{proposition}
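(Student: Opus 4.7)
The plan is to use the zero-product hypothesis to force some $p_i$ to coincide with $q_1$ or $q_2$, reduce via Eschenburg-metric isometries to $p_1=q_1$, and then carry out a short case analysis on the six products.

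First, since at least one product vanishes, some $p_i$ equals $q_1$ or $q_2$. Applying modifications (3) or (4) of Proposition \ref{prop:nochange} to permute the $p_k$, and if necessary modification (5) to swap $q_1\leftrightarrow q_2$, one reduces to $p_1=q_1$. It is crucial that modifications (1)--(5) are Eschenburg-metric isometries and also preserve the multiset of six products: mods (1) and (2) fix each product $(p_{\sigma(1)}-q_1)(p_{\sigma(2)}-q_2)$ individually, while mods (3)--(5) just permute them. The hypothesis therefore passes through the reduction.

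Next, admissibility combined with $p_1-q_1=0$ forces $|p_i-q_j|=1$ for $i,j\in\{2,3\}$, so each $\epsilon_{ij}:=p_i-q_j\in\{\pm 1\}$. Writing $C:=p_1-q_2=q_1-q_2$, the four products that are not automatically zero work out to
\[
f(2,1)=(\epsilon_{22}-C)\,C,\quad f(3,1)=(\epsilon_{32}-C)\,C,\quad f(2,3)=(\epsilon_{22}-C)\,\epsilon_{32},\quad f(3,2)=(\epsilon_{32}-C)\,\epsilon_{22}.
\]
A case analysis on $(\epsilon_{22},\epsilon_{32})$ then does the rest. When $\epsilon_{22}\neq\epsilon_{32}$, the inequalities $f(2,3)\geq 0$ and $f(3,2)\geq 0$ become $C\geq 1$ and $C\leq -1$ (or vice versa), which is impossible. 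When $\epsilon_{22}=\epsilon_{32}=\epsilon\in\{\pm 1\}$, a brief integer analysis of $(\epsilon-C)C\geq 0$ together with $(\epsilon-C)\epsilon\geq 0$ gives $C\in\{0,\epsilon\}$; the choice $C=\epsilon$ would make all six products zero, so the hypothesis that at least one is positive forces $C=0$.

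Unpacking $C=0$ gives $q_1=q_2=a$, $q_3=a+2\epsilon$, and $p_2=p_3=a+\epsilon$, i.e., $(p,q)=((a,a+\epsilon,a+\epsilon),(a,a,a+2\epsilon))$. Applying modification (1) to shift $a\mapsto 0$ and, when $\epsilon=-1$, modification (2) to negate, one lands on $(p,q)=((0,1,1),(0,0,2))$. The main delicate point is keeping careful track of which modifications are isometric: one cannot invoke Proposition \ref{prop:equalpq} as a black box, since its proof uses modifications (6) and (7), which fail to be Eschenburg-metric isometries and which change the six products.
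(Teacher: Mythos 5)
Your proof is correct, and after the common first step (reducing to $p_1 = q_1$ using modifications (3)--(5)) it takes a genuinely different route from the paper's. The paper proceeds by citing Proposition~\ref{prop:equalpq}: it reads off from that proposition's proof the two families $(p,q)=((a,0,0),(a,\pm 1,\mp 1))$ and $(p,q)=((a,1,-1),(a,0,0))$ that arise using only the Eschenburg-isometric modifications (1)--(5), and then in each family identifies one product that forces $a=1$ or yields a contradiction. You avoid Proposition~\ref{prop:equalpq} entirely and instead parametrize the four non-automatically-vanishing products with $\epsilon_{22},\epsilon_{32}\in\{\pm 1\}$ (from admissibility) and the integer $C=q_1-q_2$, then extract $C\in\{0,\epsilon\}$ from the non-negativity constraints and use ``at least one product is positive'' to rule out $C=\epsilon$. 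This is a clean, self-contained argument, and your explicit check that modifications (1)--(5) preserve the multiset of six products (modifications (1) and (2) fix each product, (3)--(5) permute them) makes a point the paper uses only implicitly. Your cautionary remark about Proposition~\ref{prop:equalpq} is apt: its stated conclusion is only ``up to the changes in Proposition~\ref{prop:nochange},'' which includes the non-isometric modifications (6) and (7) that change the products, so the paper's proof does have to peer inside the proof of Proposition~\ref{prop:equalpq} rather than cite it as a black box. Both arguments are sound; the paper's is shorter given that Proposition~\ref{prop:equalpq} is already available, while yours is more transparent about where the conclusion comes from.
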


\begin{proof}  Because permuting the $p_i$ as as well permuting $q_1$ and $q_2$ are isometries, we may assume that $p_1 = q_1$.  From Proposition \ref{prop:equalpq}, we conclude that either $(p,q)$ has the form $((a,0,0), (a,\pm 1, \mp 1) )$ or that $(p,q)$ has the form $((a, 1,-1), (a,0,0))$, where $a$ is some non-negative integer.

We begin with the first form $(p,q) = ((a,0,0), (a, \pm 1, \mp 1))$.  If $a = 0$, then it is easy to see that all six products are non-positive.  In particular, this contradicts the assumption that at least one product is positive.  Hence, we may assume $a > 0$.

Consider now the product $$(p_1-q_2)(p_2-q_1) = (a - q_2)(-a) \geq 0.$$  
Since $a>0$, then $0<a\leq q_2\leq 1$,  this forces $a=1$ and $q_2=1$, and therefore $q=(1,1,-1)$. Using Proposition \ref{prop:nochange}(1) with $t = -1$ and then Proposition \ref{prop:nochange}(2), we find $(p,q) = ((0,1,1), (0,0,2))$.  This completes the analysis of the first form.

We now move on to the second form $(p,q) = ((a, 1,-1), (a,0,0))$.  As before, if $a= 0$ then it is easy to see that all products are non-positive, giving a contradiction.  Hence, $a > 0$.

Now, consider the product $$(p_3-q_1)(p_1-q_2) = (-1 - a)a.$$  Since $a > 0$, this product is negative, giving a contradiction.

\end{proof}                                                                                                                   

\begin{theorem}\label{thm:notalmost}Suppose that $(p_{\sigma(1)}-q_1)(p_{\sigma(2)}-q_2) < 0$ for some $\sigma \in S_3$.  Then the Eschenburg metric on $E_{p,q}$ contains an open set of points which have at least one zero-curvature plane.

\end{theorem}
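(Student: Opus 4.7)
The plan is to apply Theorem \ref{thm:kerineqns} together with the continuity/connectedness strategy outlined in Remark \ref{remark}. By Proposition \ref{prop:nochange}, permuting the entries of $p$ gives isometric Eschenburg metrics, so after relabeling we may assume the negative product is $(p_1-q_1)(p_2-q_2)<0$. For each $A\in SU(3)$ define the continuous function
\[
f_A:U(2)\to\mathbb{R},\qquad f_A(k)=\sum_{j=1}^3|(Ak)_{j1}|^2 p_j - |k_{11}|^2 q_1 - |k_{21}|^2 q_2.
\]
It suffices, by Theorem \ref{thm:kerineqns} and connectedness of $U(2)$, to exhibit an open set of $A\in SU(3)$ on each of which $f_A$ takes both a non-negative and a non-positive value.

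I will show this open set may be taken to be a neighborhood of the identity $I\in SU(3)$. Recall that $K=U(2)$ embeds as $k\mapsto\operatorname{diag}(k,\overline{\det k})$, so the first column of any $k\in K$ satisfies $k_{31}=0$ and $|k_{11}|^2+|k_{21}|^2=1$. Evaluating at $A=I$ and at $k=I\in K$ yields $f_I(I)=p_1-q_1$; evaluating at $A=I$ and at $k_0=\begin{pmatrix}0&1\\-1&0\end{pmatrix}\in U(2)$ (with third diagonal entry $1$) yields $f_I(k_0)=p_2-q_2$. By the hypothesis $(p_1-q_1)(p_2-q_2)<0$, these two values are non-zero and have opposite signs.

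By joint continuity of $f_A(k)$ in $(A,k)$, there is an open neighborhood $\mathcal{U}\subseteq SU(3)$ of $I$ such that for every $A\in\mathcal{U}$ the real numbers $f_A(I)$ and $f_A(k_0)$ still have opposite strict signs. Since $U(2)$ is connected and $f_A$ is continuous, the intermediate value theorem produces a $k\in U(2)$ with $f_A(k)=0$, i.e.\ a solution of \eqref{eqn:2.4}. Theorem \ref{thm:kerineqns} then guarantees that $[A]\in E_{p,q}$ carries a zero-curvature plane. Since the projection $SU(3)\to E_{p,q}$ is an open map, the image of $\mathcal{U}$ is the required open set of points with zero-curvature planes.

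There is essentially no obstacle to this argument beyond the bookkeeping: the key observation is that equation \eqref{eqn:2.4} at $A=I$ reduces to the convex combination $|k_{11}|^2(p_1-q_1)+|k_{21}|^2(p_2-q_2)=0$, which has a solution in the interior of the simplex precisely because $(p_1-q_1)$ and $(p_2-q_2)$ have opposite signs, and this solution is stable under small perturbations of $A$.
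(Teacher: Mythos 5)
Your proposal is correct and follows essentially the same strategy as the paper: reduce by Proposition \ref{prop:nochange} to $(p_1-q_1)(p_2-q_2)<0$, use Remark \ref{remark} with Theorem \ref{thm:kerineqns}, evaluate $f_A$ at $k=I$ and at a $k$ with $(k_{11},k_{21})=(0,\pm 1)$, and invoke the intermediate value theorem. The only cosmetic difference is that the paper directly writes down the open set $U=\{A : \sum_i|a_{i1}|^2p_i<q_1 \text{ and } \sum_i|a_{i2}|^2p_i>q_2\}$ containing $I$, whereas you obtain a (possibly smaller) neighborhood of $I$ by joint continuity of $f_A(k)$ in $(A,k)$; these are the same idea expressed in two ways.
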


\begin{proof}Since permuting the $p_i$ is an isometry (Proposition \ref{prop:nochange}), we may assume that $p_1 - q_1 < 0$ and $p_2-q_2 > 0$.  Now, consider the open set $$U=\left\{A = (a_{ij})\in SU(3): \sum_{i=1}^3 |a_{i1}|^2 p_i < q_1 \text{ and } \sum_{i=1}^3 |a_{i2}|^2 p_i > q_2 \right\}.$$  Notice that the identity $I\in U$, so $U\neq \emptyset$.

We claim that for every $A\in U$, that there is a zero-curvature plane at $T_{[A]} E_{p,q}$.  From Theorem \ref{thm:kerineqns}, it is sufficient to show that for each $A\in U$, there is a $k\in K$ solving   \eqref{eqn:2.4}.

Following Remark \ref{remark}, it is sufficient to show the function $f_A:K\rightarrow\mathbb{R}$ given by $$f_A(k) = \sum_{i=1}^3 |(Ak)_{j1}|^2 p_j - |k_{11}|^2 q_1 - |k_{21}|^2 q_2 $$ attains both non-negative and non-positive values.

To that end, observe that for the identity matrix $I$, that $$f_A(I) =  \sum_{i=1}^{3} |a_{i1}|^2 p_i - q_1,$$ which is negative since $A\in U$.

Similarly, selecting $k\in K$ with $(k_{11}, k_{21}) = (0,1)$%(which is possible by Lemma \ref{lem:construct}%
, we see that $$f_A(k) = \sum_{i=1}^3 |a_{i2}|^2 p_i - q_2,$$ which is positive since $A\in U$.

%and that $U \neq 0 $ since, e.q. $A\in U$ We will show for every $A \in U$, \eqref{eqn:2.4} has a solution for some $K$. By assuming $p_1|a_{11}|^2+p_2|a_{21}|^2+p_3|a_{31}|^2-q_1>0=f(k_1)$ and $p_1|a_{12}|^2+p_2|a_{22}|^2+p_3|a_{32}|^2-q_2<0=f(k_2)$. Since there is an $A$ in the open set, all $A's$ satisfy \eqref{eqn:2.4}. For $A=. $, $f(k_1)=p_1<q_1$ and $f(k_2)=p_3>q_2$. We will also assume $(p_1-q_1)<0$ and $(p_3-q_2)>0$, taking into account the isometries.    

%By equation \eqref{eqn:2.4}, we can choose the $K's$ to be $K_{11}=1$ and $K_{21}=0$, so \eqref{eqn:2.4} is now $f(k_1)$, by the assumption the $LHS-RHS<0$. We can also choose $K_{11}=0$ and $K_{21}=1$, so \eqref{eqn:2.4} is now $f(k-2)$, by the assumption that $LHS-RHS>0$. Since we can get a value $>0$ and a value $<0$, there must be some $K$ to satisfy \eqref{eqn:2.4} for $A$ satisfying the inequalities.

\end{proof}

With all of this in hand, we can now prove cases (1), (2), and (3) of Theorem \ref{thm:main}.

\begin{proof}(Proof of (1),(2), and (3) of Theorem \ref{thm:main})  If all six products are positive, the resulting Eschenburg spaces are all positively curved\cite{Es}.

On the other hand, if all six products are non-negative, with at least one positive and at least one zero, then Proposition \ref{prop:KerinExample} implies that up to isometry, $(p,q) = ((0,1,1),(0,0,2))$.  But Kerin \cite[Theorem 2.4]{Ke1} showed that for this example, the Eschenburg metric is almost positively curved but not positively curved.

Thus, we may assume some of the six products are positive while some are negative.  By Theorem \ref{thm:quasi}, the Eschenburg metric has quasi-positive curvature.  On the other hand, by Theorem \ref{thm:notalmost}, the Eschenburg metric is not almost positively curved.

\end{proof}

\subsection{All products non-positive}\label{sec:neg}

The goal of this section is to prove Theorem \ref{thm:main}(4).  We begin with a characterization of the $p$ and $q$ for which all six products are non-positive.  Recall the notation $\underline{p} = \min\{p_1,p_2,p_3\}$ and $\overline{p} = \max\{p_1,p_2,p_3\}$.

\begin{proposition}\label{prop:negcases}Suppose $(p_{\sigma(1)} - q_1)(p_{\sigma(2)}-q_2)\leq 0$ for all $\sigma\in S_3.$  Then one of the following three cases occur up to isometry:

\begin{enumerate}  \item Both $\min\{q_1,q_2\}\leq \underline{p}$ and $\max\{q_1,q_2\}\geq \overline{p}$.

 \item  $(p,q) = ((0,0,2), (0,1,1))$

 \item  $(p,q) = ((-1,0,1), (0,0,0))$

\end{enumerate}

\end{proposition}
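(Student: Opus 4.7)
My plan is to separate into cases based on whether any $p_i$ equals $q_1$ or $q_2$, and in the second case to reduce $(p,q)$ to a canonical form using only the isometric modifications (1)--(5) from Proposition \ref{prop:nochange}.

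If no $p_i$ equals $q_1$ or $q_2$, then every one of the six products $(p_i - q_1)(p_j - q_2)$ with $i\neq j$ is \emph{strictly} negative, so $\mathrm{sign}(p_i - q_1) = -\mathrm{sign}(p_j - q_2)$ for each $i\neq j$. Comparing these sign relations by fixing one index at a time forces all three signs $\mathrm{sign}(p_i - q_1)$ to agree, and likewise all three $\mathrm{sign}(p_j - q_2)$ to agree oppositely. Consequently either $q_1 < \underline{p}$ and $q_2 > \overline{p}$, or the reverse, and in either event case (1) holds.

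Otherwise, some $p_i = q_j$ with $j\in\{1,2\}$. Using mods (3), (4), (5), I arrange $p_1 = q_1$. Admissibility then yields $|p_i - q_j| = 1$ for all $i,j\in\{2,3\}$; combined with $\sum p = \sum q$, this splits the analysis into two sub-cases: either (a) $p_2 = p_3$ and $|q_2 - q_3| = 2$, or (b) $q_2 = q_3$ and $\{p_2, p_3\} = \{q_2 - 1, q_2 + 1\}$. In subcase (a), the isometric mods (1) and (2) reduce $(p,q)$ to the form $((a,0,0),(a,\epsilon_1,\epsilon_2))$ with $a\geq 0$ and $\{\epsilon_1,\epsilon_2\} = \{-1,1\}$. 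The product $(p_2 - q_1)(p_3 - q_2) = -a\epsilon_2$ must be non-positive, forcing $\epsilon_2 = 1$ when $a > 0$. The resulting pair $((a,0,0),(a,-1,1))$ satisfies case (1), since $\min\{q_1,q_2\} = -1 \leq 0 = \underline{p}$ and $\max\{q_1,q_2\} = a = \overline{p}$; the $a = 0$ case is covered identically (with mod (2) absorbing the sign ambiguity).

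In subcase (b), after shifting, negating, and permuting $p$ via isometric mods, $(p,q)$ takes the form $((a,-1,1),(a,0,0))$ with $a\geq 0$. The non-positivity of $(p_3 - q_1)(p_2 - q_2) = a - 1$ restricts $a$ to $\{0,1\}$. For $a = 0$, swapping $p_1$ and $p_2$ (mod (3)) yields $((-1,0,1),(0,0,0))$, i.e., case (3). For $a = 1$, shifting by $m = -1$ (mod (1)), negating (mod (2)), and permuting $p$ (mods (3) and (4)) transforms $((1,-1,1),(1,0,0))$ into $((0,0,2),(0,1,1))$, i.e., case (2). The main obstacle is carrying out all these reductions without invoking mod (7), which is used in the proof of Proposition \ref{prop:equalpq} but is \emph{not} an isometry of the Eschenburg metric; fortunately, the hypothesis of non-positive products eliminates precisely the sibling configurations (such as $q = (a, 1, -1)$ with $a \geq 1$ in subcase (a), or $a \geq 2$ in subcase (b)) that would otherwise require this non-isometric swap.
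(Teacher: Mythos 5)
Your proof is correct, and it takes a genuinely different route from the paper's. The paper argues by contraposition: assuming conclusion (1) fails, it normalizes to $p_1 < q_1 \leq q_2$ and $p_1 \leq p_2 \leq p_3$, shows the chain $p_1 < q_1 \leq q_2 \leq p_2 \leq p_3$ must hold, deduces from the two forced-zero products that $(p,q)$ is exceptional via Proposition \ref{prop:equalpq}, and then separately enumerates the Eschenburg-isometry classes of each exceptional diffeomorphism type to see which fall into cases (1)--(3). You instead split on whether some $p_i$ coincides with $q_1$ or $q_2$: when no such coincidence exists, all six products are strictly negative and the sign analysis immediately gives case (1); when $p_1 = q_1$ (after isometric permutations), you re-derive the canonical forms directly from admissibility and $\sum p_i = \sum q_i$, and then let the non-positivity hypothesis cut down the possibilities. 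Your approach sidesteps the enumeration of isometry classes entirely, at the cost of redoing some of the combinatorics underlying Proposition \ref{prop:equalpq}; it also makes explicit the care needed to use only isometric modifications (1)--(5), which the paper handles implicitly by enumerating isometry classes after the fact. Both arguments land cleanly. One small remark: your parenthetical about ``$a \geq 2$ in subcase (b)'' requiring a non-isometric swap is a bit off --- for $a \geq 2$ the pair $((a,-1,1),(a,0,0))$ simply has a positive product and is excluded by the hypothesis, not by any appeal to modifications (6) or (7) --- but this does not affect the main argument, where the constraint $(p_3-q_1)(p_2-q_2) = a-1 \leq 0$ correctly forces $a \in \{0,1\}$.
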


\begin{proof}We will assume the first conclusion does not occur and show that either the second or third conclusion must hold.

Assuming the first conclusion does not hold, we conclude $\max\{q_1, q_2\} <  \overline{p}$ or $\min\{q_1,q_2\} > \underline{p}$.  If the first option occurs, we use Proposition \ref{prop:nochange}(2) to replace $q$ with $-q$ and $p$ with $-p$.  The condition $\max\{q_1,q_2\} < \overline{p}$ becomes $\min\{-q_1, -q_2\} > \min\{-p_1,-p_2,-p_3\}$.  In particular, we may assume without loss of generality that  $\min\{q_1,q_2\} > \underline{p}$.  Further, by using Proposition \ref{prop:nochange}(3)(4)(5), and (6), we may assume \begin{equation}\label{eqn:inequalities} \underline{p} = p_1\leq p_2\leq p_3 = \overline{p} \text{ and } p_1< q_1 \leq q_2.\end{equation}

We will now show that that condition that all six products $(p_{\sigma(1)}-q_1)(p_{\sigma(2)} - q_2)$ are non-positive together with \eqref{eqn:inequalities} implies that $(p,q)$ is exceptional.  From Proposition \ref{prop:equalpq}, to show $(p,q)$ is exceptional, it is sufficient to show that $q_1 = q_2 = p_i$ for some $i$, or that $p_2 = p_3 = q_i$ for some $i$.

If $q_2 > p_2$, then $(p_1 - q_1)(p_2-q_2) > 0$, giving a contradiction.  Thus, we have $p_1 < q_1 \leq q_2 \leq p_2 \leq p_3$.

Then both $(p_2-q_1)(p_3-q_2)\geq 0$ and $(p_3-q_1)(p_2-q_2)\geq 0$, so both products must be zero.  The case that $q_1 = p_3$ cannot occur for otherwise we see $q_1\leq p_2\leq p_3 = q_1$, so $q_2 = q_1 = p_2 = p_3$, which contradicts admissibility of $p$ and $q$.  Thus, $q_1\neq p_3$ which then implies that $q_2 = p_2$ and either $q_2 = p_3$ or $q_1 = p_2$.  In either case, $(p,q)$ must be exceptional.  We now focus on each exceptional case.

Taking into account the isometries in Proposition \ref{prop:nochange}, there are at most four Eschenburg metrics on the exceptional Eschenburg space $E_{p,q}$ with $(p,q) = ((0,0,2),(0,1,1))$.  They are represented by the pairs \begin{align*} ((0,0,2),(1,1,0)), &((-2,0,0), (-1,0,-1)), \\ &((-1,-1,0),(-2,0,0)), \text{ and } ((-1,-1,0), (0,0,-2)).\end{align*}

Of these, the first and last pairs have $(p_1 - q_1 )(p_2-q_2)> 0$ and the third satisfies the first conclusion of this proposition.  The second, up to isometry, the example given in the second conclusion of this proposition.

Similarly, up to isometry, there are at most three Eschenburg metrics on the exceptional Eschenburg space $E_{(p,q)}$ with $(p,q) = ((0,0,0),(1,-1,0))$, but both pairs with $p=(0,0,0)$ fall into the first conclusion of this proposition.  The remaining pair is $((-1,0,1), (0,0,0))$, giving the third conclusion of the proposition.

\end{proof}

We are now ready to prove Theorem \ref{thm:main}(4).

\begin{theorem}  Suppose $(p_{\sigma(1)} - q_1)(p_{\sigma(2)}-q_2)\leq 0$ for all $\sigma\in S_3.$   Then the Eschenburg metric on $E_{p,q}$ has a zero-curvature plane at every point.

\end{theorem}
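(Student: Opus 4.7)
The plan is to invoke Proposition \ref{prop:negcases} to reduce to its three cases, and in each show that for every $A\in SU(3)$ one of the equations \eqref{eqn:2.3}, \eqref{eqn:2.4} of Theorem \ref{thm:kerineqns} is satisfiable; that will produce a zero-curvature plane at $[A]$. I will work exclusively with \eqref{eqn:2.4}. The key reformulation is that if $v = (k_{11}, k_{21})^T$ denotes the first column of $k\in U(2)$ and $A_{12}$ is the $3\times 2$ matrix of the first two columns of $A$, then \eqref{eqn:2.4} says $v^*Mv = 0$, where $M = A_{12}^* P A_{12} - \operatorname{diag}(q_1, q_2)$ is a $2\times 2$ Hermitian matrix and $v$ ranges over unit vectors in $\mathbb{C}^2$.

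In case 1, up to swapping $q_1$ and $q_2$ (an isometry by Proposition \ref{prop:nochange}(5)), I may assume $q_2 \leq \underline{p} \leq \overline{p} \leq q_1$. Evaluating the quadratic form at the standard basis vectors gives the diagonal entries of $M$, which by Lemma \ref{lem:p} satisfy $M_{11} \leq \overline{p} - q_1 \leq 0$ and $M_{22} \geq \underline{p} - q_2 \geq 0$. The intermediate value theorem on the connected group $U(2)$, as summarized in Remark \ref{remark}, then produces a $k$ satisfying \eqref{eqn:2.4}.

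For the exceptional cases 2 and 3, the diagonal of $M$ need not have opposite signs, so I replace the IVT argument with a determinant argument: since $M$ is $2\times 2$ and Hermitian, the form $v \mapsto v^* M v$ vanishes on the unit sphere if and only if $M$ is not sign-definite, equivalently $\det M \leq 0$. In case 2 direct computation yields $M = 2 a_3^* a_3 - \operatorname{diag}(0,1)$ where $a_3 = (a_{31}, a_{32})$, so $\det M = -2|a_{31}|^2 \leq 0$. In case 3, with $P = \operatorname{diag}(-1, 0, 1)$ and $Q_{12} = 0$, a Lagrange-identity-style expansion gives $\det M = -|a_{11}a_{32} - a_{12}a_{31}|^2 \leq 0$. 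The main obstacle is precisely this last calculation: the diagonal entries $|a_{31}|^2 - |a_{11}|^2$ and $|a_{32}|^2 - |a_{12}|^2$ and the off-diagonal $\bar a_{31} a_{32} - \bar a_{11} a_{12}$ must be combined so that the cross terms cancel into a perfect square. In either case \eqref{eqn:2.4} is solvable, completing the proof, and notably the $Y_3$-equation \eqref{eqn:2.3} never needs to be invoked.
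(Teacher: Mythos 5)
Your proposal is correct and follows the same global structure as the paper: reduce via Proposition \ref{prop:negcases} to three cases and show that \eqref{eqn:2.4} is solvable for every $A$. Where you diverge is the technique used to verify solvability in the two exceptional cases. The paper handles all three cases uniformly by exhibiting two explicit matrices $k$ at which the function $f_A$ of Remark \ref{remark} takes values of opposite sign (for Cases 2 and 3 it chooses $k$ annihilating one of the two squared moduli and compares with $k=I$), then invokes connectedness of $U(2)$. You instead rewrite \eqref{eqn:2.4} as $v^*Mv=0$ with $M = A_{12}^*PA_{12}-\operatorname{diag}(q_1,q_2)$ Hermitian $2\times 2$ and $v$ a unit vector, and observe that this is solvable exactly when $\det M\leq 0$; the determinant then collapses algebraically to $-2|a_{31}|^2$ in Case 2 and to $-|a_{11}a_{32}-a_{12}a_{31}|^2$ in Case 3 (I verified both). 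Your computations are correct. The determinant criterion buys a cleaner, explicit-$k$-free verification, and in fact it would have subsumed Case 1 as well: the sign conditions on the diagonal entries $M_{11}\leq 0\leq M_{22}$ that you obtain from Lemma \ref{lem:p} already force $\det M = M_{11}M_{22}-|M_{12}|^2\leq 0$, so you could have dispensed with the separate IVT step there too. The trade-off is that the paper's approach is more elementary and avoids any linear-algebraic repackaging, while yours is tidier and exhibits the geometric content (the quadratic form $M$) more transparently.
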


\begin{proof}  One of case 1, 2, or 3 of Proposition \ref{prop:negcases} must occur.  
The strategy for each case is the same as in the proof of Theorem \ref{thm:notalmost}.  Namely, we will show the function $f_A$ from Remark \ref{remark} attains both non-negative and non-positive values.

\textbf{Case 1:}  Assume that case 1 occurs.  Recall that by Lemma \ref{lem:p}, that for all $A\in SU(3)$ and $k\in U(2)$, that $\underline{p}\leq \sum_{j=1}^3 |(Ak)_{j1}|^2 p_j \leq \overline{p}.$

Using the identity $I\in K$, we find $$f_A(I) = p_1|a_{11}|^2+p_2|a_{21}|^2+p_3|a_{31}|^2-q_1.$$ Since $q_1\leq \underline{p}$, Lemma \ref{lem:p} implies $f_A(I)\geq 0$.

On the other hand, if we select $k\in K$ with $(k_{11},k_{21}) = (0,1)$, then $$f_A(k) = p_1|a_{12}|^2+p_2|a_{22}|^2+p_3|a_{32}|^2-q_2.$$ Since  $q_2\geq \overline{p}$, Lemma \ref{lem:p} implies $f_A(k)\leq 0$. This concludes the first case.%Since, there is a $k$ that makes $f_A(k)$ both non-negative and non-positive, there must be a $k\in U(2)$ to make $f_A(k)=0$.

%\begin{proposition}\label{prop:esczero} The Eschenburg metric on $E_{(0,0,2),(0,1,1)}$ has a zero curvature plane at every point.

%\end{proposition}

%\begin{proof} %From Theorem \ref{thm:kerineqns}, it is sufficient to show that for each $A\in SU(3)$, that there is a $k\in U(2)$ satisfying \eqref{eqn:2.4}.

\bigskip

\textbf{Case 2:}  We now move onto the second case.  Here, the function $f_A$ takes the form \begin{equation*}f_A(k) = 2|a_{31} k_{11} + a_{32} k_{21}|^2 - |k_{21}|^2. \end{equation*}

%As was done previously, it is sufficient to show that the function $f_A:U(2)\rightarrow \mathbb{R}$ given by $f_A(k) = 2|a_{31} k_{11} + a_{32} k_{21}|^2 - |k_{21}|^2$ obtains both non-negative and non-positive values.

If $a_{31} = a_{32} = 0$, then obviously $f_A(k)\leq 0$.  Otherwise, the matrix $$k = \frac{1}{\sqrt{|a_{31}|^2 + |a_{32}|^2}} \begin{bmatrix} \overline{a}_{32} & a_{31} &0 \\ - \overline{a}_{31} &  a_{32} & 0 \\ 0 & 0 & \sqrt{|a_{31}|^2 + |a_{32}|^2}  \end{bmatrix} \in K$$ and $f_A(k) = -|k_{21}|^2 \leq 0$.   On the other hand, for the identity matrix $I$, we have $f_A(I)\geq 0$.   This completes the second case.
 %We can also select $k\in U(2)$ to make $-|k_{21}|^2=0$, then $f_A(k)\geq 0$ as well. So, there must be a $k\in U(2)$ to make $2|a_{31} k_{11} + a_{32} k_{21}|^2 - |k_{21}|^2=0$.

%Substituting $(k_{11}, k_{21}) = (1,0)$, we obtain $2|a_{31}|^2 \geq 0$.  Thus, we need only find a $k$ for which $f_A(k)\leq 0$.

%To that end, first note that if $a_{32} = 0$, then the point $(k_{11},k_{21}) = (0,1)$ gives $f_A(k) = -1 < 0$, completing the proof in this case.  Thus, we may assume $a_{32} \neq 0$.

%Now, set $$(k_{11}, k_{21}) = \left(\sqrt{\frac{|a_{32}|^2}{|a_{31}|^2 + |a_{32}|^2}}, -\frac{a_{31}}{a_{32}}\sqrt{\frac{|a_{32}|^2}{|a_{31}|^2 + |a_{32}|^2}}\right).$$

%It is straightforward to verify that $|k_{11}|^2 + |k_{21}|^2 = 1$, so that Lemma \ref{lem:construct} guarantees the existence of a $k\in U(2)$ having $(k_{11},k_{21})^t$ as the first column.

%Note that substituting this choice of $k$ into the left hand side of \eqref{eqn:except1} we obtain $0$.  Thus, $f_A(k) = - |k_{21}|^2 \leq 0$, completing the proof in this case.

\bigskip

\textbf{Case 3:} We lastly move on to the third case.  Here,  from our choice of $p$ and $q$, the function $f_A$ takes the form $$f_A(k) = -|a_{11}k_{11}+a_{12}k_{21}|^2+|a_{31}k_{11}+a_{32}k_{21}|^2.$$

%From Theorem \ref{thm:kerineqns}, it is sufficient to show that for each $A\in SU(3)$, that there is a $k\in U(2)$ satisfying \eqref{eqn:2.4}.  From our choice of $p$ and $q$, equation \eqref{eqn:2.4} takes the form $$-|a_{11}k_{11}+a_{12}k_{21}|^2+|a_{31}k_{11}+a_{32}k_{21}|^2=0.$$
%\bigskip

%As was done previously, it is sufficient to show that the function $f_A$ %:U(2)\rightarrow \mathbb{R}$ given by $f_A(k)=-|a_{11}k_{11}+a_{12}k_{21}|^2+|a_{31}k_{11}+a_{32}k_{21}|^2$
%obtains both non-negative and non-positive values.

%\bigskip

As in the proof of Case 2, we can select $k\in U(2)$ to make $-|a_{11}k_{11}+a_{12}k_{21}|^2= 0$, so that $f_A(k)\geq 0$. We can also select $k\in U(2)$ to make $|a_{31}k_{11}+a_{32}k_{21}|^2=0$, so that $f_A(k)\leq 0$ as well. This completes the proof of the third case, and hence, of the theorem.%So, there must be a $k\in U(2)$ to make $-|a_{11}k_{11}+a_{12}k_{21}|^2+|a_{31}k_{11}+a_{32}k_{21}|^2=0$.
\end{proof}

%\subsection{Exceptional examples}\label{sec:exc}

%The goal of this section is to compute the curvature of all Eschenburg metrics on the exceptional examples for which $(p_{\sigma(i)} - q_1 )(p_{\sigma_2(i)}-q_2)\leq 0$ for all $\sigma \in S_3$.

%We will show that in both cases, there is a zero-curvature plane at every point. 

%We conclude with the proof of Theorem \ref{thm:main}(4).

%\begin{proof}(Proof of Theorem \ref{thm:main}(4))  Suppose that $(p_{\sigma(i)} - q_1 )(p_{\sigma_2(i)}-q_2)\leq 0$ for all $\sigma \in S_3$.  Then one of the cases of Proposition \ref{prop:negcases} must occur.  If the first case occurs, Proposition \ref{prop:qopposite} guarantees that every point has a zero-curvature plane.

%Thus, we may assume the second case of Proposition \ref{prop:negcases} occurs.  That is, we may assume $(p,q)$ is exceptional.  From the discussion at the beginning of Section \ref{sec:exc}, the only exceptional cases where Proposition \ref{prop:qopposite} cannot be applied prop cases where Proposition \ref{prop:}If the second case of Proposition \ref{prop:negcases} occurs, then Proposition \ref{prop:qopposite} again yields a zero-curvature plane at every point, except possibly when  $$(p,q) = ( (0,0,2), (0,1,1)) \text{ and } (p,q)= ((-1,1,0), (0,0,0)).$$  But Propositions \ref{prop:esczero} and \ref{prop:awzero} show that for both of these cases, there is a zero-curvature plane at every point.

%\end{proof}

\section{Wilking metrics with many zero-curvature planes}\label{sec:wcurvature}

In this section, we prove Theorem \ref{thm:almpos}.  We begin by setting up notation.  We let $p = (0,0,p_3)$ and $q = (q_1,q_2,q_3)$, where $p_3 = q_1+q_2+q_3$, and $q_1,q_2$, and $q_3$ are relatively prime integers.  Following Proposition \ref{prop:choices}, we assume that $p_3 > 0$ and that $q_1 > q_2$. We consider the Eschenburg space $E_{p,q}$ equipped with the Wilking metric.

We select a point $A = A(\theta,\alpha) \in \mathcal{F}$ so that $$A  = \begin{bmatrix} \cos(\alpha) & -\sin(\alpha) & 0 \\ \cos(\theta)\sin(\alpha) & \cos(\theta)\cos(\alpha) & -\sin(\alpha)\\ \sin(\theta)\sin(\alpha) & \sin(\theta)\cos(\alpha) & \cos(\theta) \end{bmatrix}.$$

We define $h(\theta)$ by the formula $$h(\theta) = \frac{(p_3-q_3)\cos^2(\theta) + q_3\sin^2(\theta)}{\sin^2(\theta)(p_3 \cos^2(\theta)+q_3)}$$ and we define $g(\theta)$ by the formula $$g(\theta) = \frac{p_3 q_1\cos^2(\theta)+p_3q_3\sin^2(\theta) -q_2q_3}{(q_1-q_2)(p_3 \cos^2(\theta)+q_3)}.$$

The importance of $h$ and $g$ is given by the following theorem which allows us to recognize when an Eschenburg space with Wilking metric is not almost positively curved.

\begin{theorem}\label{thm:recog}Suppose there is an real number $\theta_0$ for which both $0< h(\theta_0) < 1$ and $0 < g(\theta_0) < 1$.  Then there is a non-empty open subset of points in $E_{p,q}$ for which there is at least one zero-curvature plane at each point.

\end{theorem}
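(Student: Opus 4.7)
The plan is to apply Proposition \ref{prop:eqns} at a carefully chosen point $[A(\theta_0,\alpha_0)]\in E_{p,q}$ and then propagate the conclusion to a neighborhood by smoothness. Concretely, I will exhibit $k\in U(2)$ and $0\neq Z\in\mathcal{Z}$ simultaneously satisfying Conditions A, B, and C at $[A(\theta_0,\alpha_0)]$; because all three conditions are smooth in $(\theta,\alpha)$ and in the parameters of $(k,Z)$, the same data persists on a two-parameter neighborhood of $(\theta_0,\alpha_0)$ in $\mathcal{F}$, and the corresponding family of cohomogeneity-two orbits fills out an open subset of $E_{p,q}$ by Proposition \ref{prop:F}.

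The simple form $p=(0,0,p_3)$ collapses Condition A to the real scalar equation
$$p_3\sin^2(\theta)\,\bigl|\sin(\alpha)k_{11}+\cos(\alpha)k_{21}\bigr|^2 \;=\; q_1|k_{11}|^2 + q_2|k_{21}|^2,$$
and using the right $T^2$-isometries from Proposition \ref{prop:F} we may normalize $k_{11},k_{21}\geq 0$ with $k_{11}^2+k_{21}^2=1$. Parameterizing $Z\in\mathcal{Z}$ by its real entry $\beta$ and complex entry $z$, a direct computation using $Y_1=i\operatorname{diag}(-2,1,1)$ and the explicit form of $\mathcal{Z}$ should convert Condition B into a single real-linear equation in $(\beta,\operatorname{Re}z,\operatorname{Im}z)$, and Condition C into the requirement that $(\operatorname{Ad}_{Ak}Z)_\mathfrak{p}$ be a real multiple of the fixed vector $(\operatorname{Ad}_{Ak}Y_1)_\mathfrak{p}$. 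Solving Condition C to eliminate $z$ in favor of $\beta$ and substituting into Condition B produces a single scalar compatibility equation relating $\theta$, $\alpha$, and $k_{11}^2$; combined with Condition A this yields a system of two scalar equations in the two unknowns $\sin^2(\alpha)$ and $k_{11}^2$.

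The main obstacle is the explicit algebraic rearrangement showing that this $2\times 2$ system is equivalent to $\sin^2(\alpha)=h(\theta)$ and $k_{11}^2=g(\theta)$ (up to an obvious substitution of variables). Once that identification is made, the hypotheses $0<h(\theta_0)<1$ and $0<g(\theta_0)<1$ guarantee that legitimate values $\sin^2(\alpha_0)\in(0,1)$ and $k_{11}^2\in(0,1)$ exist; the remaining freedom in $\beta\neq 0$ then gives $Z\neq 0$, since the map $Z\mapsto(\operatorname{Ad}_{Ak}Z)_\mathfrak{p}$ will be shown to be transverse to the line $\mathbb{R}(\operatorname{Ad}_{Ak}Y_1)_\mathfrak{p}$ at the constructed point, so the solution of Conditions B and C is not forced to collapse to $Z=0$. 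Openness is automatic: $h$ and $g$ are continuous functions of $\theta$, so the strict inequalities persist on a neighborhood of $\theta_0$, and the implicit function theorem applied to the smooth system above furnishes $(\alpha(\theta),k(\theta),Z(\theta))$ depending smoothly on $\theta$, with $\alpha$ actually free to vary in a one-parameter family for each such $\theta$, producing the required two-dimensional family of orbit representatives in $\mathcal{F}$.
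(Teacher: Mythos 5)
Your strategy of reducing Conditions A, B, C of Proposition \ref{prop:eqns} to the scalar constraints $|k_{11}|^2 = g(\theta)$ and (an expression involving $\alpha$ and $k$) $= h(\theta)$ is exactly the route the paper takes via Lemma \ref{lem:important}, and your observation that the hypotheses $0<h(\theta_0)<1$, $0<g(\theta_0)<1$ guarantee a solution at a single point is correct. The gap is in the openness argument.

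After you normalize $k$ so that $k_{11},k_{21}\geq 0$ are real, the resulting system is (as you say) two scalar equations in the two unknowns $\sin^2(\alpha)$ and $k_{11}^2$, with $\theta$ as a parameter. Applying the implicit function theorem to such a system, as you do in your final paragraph, produces at most a smooth \emph{curve} $\theta\mapsto(\alpha(\theta),k(\theta),Z(\theta))$. The assertion that ``$\alpha$ [is] actually free to vary in a one-parameter family for each such $\theta$'' directly contradicts this: if the system pins down $\sin^2(\alpha)$ in terms of $\theta$, then $\alpha$ is determined up to a discrete choice, and the locus of solvable $(\theta,\alpha)$ is a one-dimensional curve in $\mathcal{F}$, not an open set. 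Since the $U(2)\times T^2$ action has cohomogeneity two (Proposition \ref{prop:F}), the saturation of a curve in $\mathcal{F}$ has codimension one in $SU(3)$ and hence projects to a nowhere-dense set in $E_{p,q}$ --- which does not establish the theorem.

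The missing ingredient is precisely the parameter your normalization eliminates: the phase $\gamma$ of $k_{12}$. The paper keeps $k_{12}=\sqrt{1-g(\theta)}\,e^{i\gamma}$ general and shows (Lemma \ref{lem:boundlhs}) that as $\gamma$ ranges over $\mathbb{R}$, the left-hand side $|\sin(\alpha)k_{11}-\cos(\alpha)\overline{k}_{12}|^2$ sweeps a whole interval of values; the Intermediate Value Theorem in $\gamma$ then solves the $h$-equation for \emph{every} $(\theta,\alpha)$ in the explicitly defined open set $V\subseteq\mathcal{F}$ whose defining inequalities are the strict triangle inequalities
\[
\bigl(|\sin\alpha|\sqrt{g(\theta)}-|\cos\alpha|\sqrt{1-g(\theta)}\bigr)^2 < h(\theta) < \bigl(|\sin\alpha|\sqrt{g(\theta)}+|\cos\alpha|\sqrt{1-g(\theta)}\bigr)^2 .
\]
This is what yields a genuinely two-dimensional family of orbit representatives, whose saturation (via Corollary \ref{cor:sameorbit}) is open in $SU(3)$ and therefore projects to a non-empty open subset of $E_{p,q}$. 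To repair your argument you would need to reintroduce $\gamma$ as a free variable and carry out an interval-containment argument of this kind, rather than reduce to a square system of equations in $\alpha$ and $k_{11}$.
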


To prove Theorem \ref{thm:recog}, we require several lemmas, the first of which is a routine calculation.

\begin{lemma}\label{lem:identities} All of the following identities hold:

\begin{enumerate}\item $\displaystyle g(\theta) = \frac{p_3\sin^2(\theta)h(\theta)-q_2}{q_1-q_2}$

\item  $\displaystyle \sin^2(\theta)(1-h(\theta))= -\cos^2(\theta)\left(\frac{p_3\cos^2(\theta)-q_3}{p_3\cos^2(\theta)+q_3}\right)$

\item  $\displaystyle h'(\theta) = -\frac{2\sin(\theta)\cos(\theta)(p_3^2 \cos^4(\theta)+2p_3 q_3 \sin^2(\theta)\cos^2(\theta)-q_3^2)}{\sin^4(\theta)(p_3\cos^2(\theta)+q_3)^2}$

\item  $\displaystyle g'(\theta) = \frac{4p_3q_3^2 \sin(\theta)\cos(\theta)}{(q_1-q_2)(p_3\cos^2(\theta)+q_3)^2}$

\end{enumerate}
\end{lemma}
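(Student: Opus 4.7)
The plan is to verify each of the four identities by direct computation, using the definitions of $h(\theta)$ and $g(\theta)$ together with the identity $p_3 = q_1 + q_2 + q_3$ (which is the only non-algebraic input). Since all four identities are routine, I will focus on the key simplifications rather than every algebraic step.

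For (1), I would clear denominators and show that $(q_1 - q_2) g(\theta) + q_2 = p_3 \sin^2(\theta) h(\theta)$. Substituting the definitions, the left-hand side becomes $\frac{p_3 q_1 \cos^2(\theta) + p_3 q_3 \sin^2(\theta) - q_2 q_3 + q_2(p_3 \cos^2(\theta) + q_3)}{p_3 \cos^2(\theta) + q_3}$, while the right-hand side, after the $\sin^2(\theta)$'s cancel, is $\frac{p_3[(p_3 - q_3)\cos^2(\theta) + q_3 \sin^2(\theta)]}{p_3 \cos^2(\theta) + q_3}$. Matching numerators reduces to showing $p_3(p_3 - q_3)\cos^2(\theta) = p_3(q_1 + q_2)\cos^2(\theta)$, which is exactly the relation $p_3 - q_3 = q_1 + q_2$.

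For (2), I would compute $1 - h(\theta)$ as a single fraction and simplify the numerator: $\sin^2(\theta)(p_3 \cos^2(\theta) + q_3) - (p_3 - q_3)\cos^2(\theta) - q_3\sin^2(\theta)$ collapses to $\cos^2(\theta)[p_3\sin^2(\theta) - p_3 + q_3] = -\cos^2(\theta)[p_3\cos^2(\theta) - q_3]$. Multiplying both sides by $\sin^2(\theta)$ and dividing by the denominator gives the claimed identity immediately.

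For (3) and (4), I would apply the quotient rule with numerator $N$ and denominator $D$ as written in the definitions. The hard part for (4) is the algebraic simplification of $N'D - ND'$: after factoring out $2p_3(q_1-q_2)\sin\theta\cos\theta$, the remaining bracketed expression is $(q_3 - q_1)(p_3\cos^2\theta + q_3) + p_3 q_1 \cos^2\theta + p_3 q_3 \sin^2\theta - q_2 q_3$, which collapses to $q_3(p_3 + q_3 - q_1 - q_2) = 2q_3^2$ once we use $p_3 = q_1 + q_2 + q_3$. Canceling one factor of $(q_1-q_2)$ with $D^2$ yields the claimed formula. For (3), since the denominator of $h$ contains the extra factor $\sin^2(\theta)$, the quotient rule produces an additional term, but after collecting $\sin\theta\cos\theta$ and simplifying using $\sin^2\theta + \cos^2\theta = 1$, the numerator collapses to $-2\sin\theta\cos\theta(p_3^2\cos^4\theta + 2p_3 q_3 \sin^2\theta\cos^2\theta - q_3^2)$ as stated. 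The only genuine obstacle in all four parts is bookkeeping; the identity $p_3 = q_1 + q_2 + q_3$ is the single non-trivial input, and it enters decisively only in (1) and (4).
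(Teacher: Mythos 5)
Your proposal is correct. The paper itself offers no proof of this lemma, dismissing it as ``a routine calculation,'' and your direct verification by clearing denominators, applying the quotient rule, and invoking $p_3 = q_1 + q_2 + q_3$ at precisely the two places (identities 1 and 4) where it is needed is exactly the intended routine computation; the key simplifications you highlight (the numerator of $1 - h$ collapsing to $-\cos^2\theta\,(p_3\cos^2\theta - q_3)$, and the bracket in $N'D - ND'$ for $g'$ reducing to $2q_3^2$) all check out.
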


We continue with the next lemma.

\begin{lemma}\label{lem:important}  Suppose that  $\theta \in \mathbb{R}$ has all of the following properties:

\begin{itemize} \item Both $p_3 \cos^2(\theta) \pm q_3 \neq 0$

\item  $\theta$ is not an integral multiple of $\pi/2$

\end{itemize}

Additionally assume that there is an element $ k = (k_{ij})\in SU(2)\subseteq K$ and $\alpha \in \mathbb{R}$ for which both \begin{equation}\label{eqn:geqn} |k_{11}|^2 = g(\theta)\end{equation} and \begin{equation}\label{eqn:heqn} |\sin(\alpha)k_{11} - \cos(\alpha)\overline{k}_{12}|^2 = h(\theta) .\end{equation}  Then, with respect to the Wilking metric, there is a zero-curvature plane at $[A(\theta,\alpha)]\in E_{p,q}$.

\end{lemma}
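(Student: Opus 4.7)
The plan is to apply Proposition \ref{prop:eqns} at the point $A = A(\theta, \alpha) \in \mathcal{F}$, using the hypothesized $k \in SU(2) \subseteq K$ together with a suitably chosen nonzero $Z \in \mathcal{Z}$. There are three conditions to verify: Condition A depends only on $A$ and $k$, while Conditions B and C involve $Z$.

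First I would show that Condition A is forced by the hypotheses. Since $p = (0,0,p_3)$, Condition A reduces to $p_3 |(Ak)_{31}|^2 = |k_{11}|^2 q_1 + |k_{21}|^2 q_2$. With $k \in SU(2)$ embedded as $\diag(k, 1)$, we have $k_{21} = -\overline{k}_{12}$ and $k_{31} = 0$, so from the explicit form of $A(\theta, \alpha)$ and (\ref{eqn:heqn}),
$$|(Ak)_{31}|^2 = \sin^2(\theta)\, |\sin(\alpha) k_{11} - \cos(\alpha) \overline{k}_{12}|^2 = \sin^2(\theta)\, h(\theta).$$
The right-hand side of Condition A simplifies to $(q_1 - q_2)|k_{11}|^2 + q_2 = (q_1 - q_2) g(\theta) + q_2$ by (\ref{eqn:geqn}), and Lemma \ref{lem:identities}(1) is precisely the statement that these two quantities are equal.

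Next I would parametrize $Z \in \mathcal{Z}$ by $(\beta, z) \in \mathbb{R} \oplus \mathbb{C}$ and translate Conditions B and C into linear conditions on these three real parameters. Condition B is a single real linear equation whose coefficients come from computing $Ad_{A^{-1}}(iP) - iQ$ and pairing with $Ad_k Z$ in the bi-invariant inner product. For Condition C, I would decompose $Z = \beta Y_3 + Z_\mathfrak{p}(z)$ and compute the $\mathfrak{p}$-components of $Ad_{Ak} Y_1$, $Ad_{Ak} Y_3$, and $Ad_{Ak} Z_\mathfrak{p}(z)$ in a standard basis of the four-dimensional space $\mathfrak{p}$, reducing the proportionality requirement to a system of linear relations among $\beta$, $\operatorname{Re}(z)$, and $\operatorname{Im}(z)$.

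The heart of the proof, and the main obstacle, is showing that the combined linear system from Conditions B and C admits a nonzero solution in $\mathcal{Z}$. This is where the non-degeneracy hypotheses on $\theta$ enter: the assumption that $\theta$ is not an integral multiple of $\pi/2$ keeps the relevant entries of $A$ nonzero, and $p_3 \cos^2(\theta) \pm q_3 \neq 0$ prevents the denominators defining $h$ and $g$ from vanishing. The identities (2)--(4) of Lemma \ref{lem:identities} should then collapse the rank of the combined system to at most two in the three real parameters, producing a nonzero $Z$. Once such a $Z$ is in hand, Proposition \ref{prop:eqns} immediately yields a zero-curvature plane at $[A(\theta,\alpha)] \in E_{p,q}$.
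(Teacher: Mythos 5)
Your overall framework matches the paper: apply Proposition \ref{prop:eqns} at $A = A(\theta,\alpha)$ with the hypothesized $k$ and a nonzero $Z\in\mathcal{Z}$ to be constructed, and your verification of \ref{eqn:xtrace} (Condition A) is correct and is essentially the paper's argument, using \eqref{eqn:geqn}, \eqref{eqn:heqn}, and Lemma \ref{lem:identities}(1).

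However, there is a genuine gap in the rest of the plan. You parametrize $Z$ by $(\beta, z)\in\mathbb{R}\oplus\mathbb{C}$ and reduce Conditions B and C to a linear system in these three real parameters, then assert that identities (2)--(4) of Lemma \ref{lem:identities} ``should then collapse the rank'' of the combined system to at most two, so a nonzero $Z$ exists. This assertion is the entire content of the lemma and is left unproved. As you observe, Condition B is one real linear equation and Condition C forces $(Ad_{Ak}Z)_\mathfrak{p}$ into the line spanned by $(Ad_{Ak}Y_1)_\mathfrak{p}$ inside the four-dimensional space $\mathfrak{p}$, which is up to three more conditions; absent any cancellation, the combined system on $\mathcal{Z}\cong\mathbb{R}^3$ would have only the trivial solution. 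The hypotheses \eqref{eqn:geqn} and \eqref{eqn:heqn} on $(k,\alpha)$ relative to $\theta$ are exactly what causes the needed degeneracy, but your sketch does not use them in Conditions B and C at all. Moreover, the identities you cite are the wrong ones: items (3) and (4) of Lemma \ref{lem:identities} concern $h'(\theta)$ and $g'(\theta)$, which play no role at a fixed $\theta$; the paper's calculation instead uses items (1) and (2). The paper resolves the gap by writing $Z$ down explicitly, taking $z = 3i\tan(\theta)\bigl(\cos(\alpha)k_{11} + \sin(\alpha)\overline{k}_{12}\bigr)$ and $\beta = -\tfrac{2p_3\cos^2(\theta)}{p_3\cos^2(\theta)+q_3}$, noting $\beta\neq 0$ (hence $Z\neq 0$), and then verifying Conditions B and C by direct computation, where \eqref{eqn:heqn} is used to convert $|\cos(\alpha)k_{11}+\sin(\alpha)\overline{k}_{12}|^2$ into $1-h(\theta)$ and identity (2) supplies the needed algebraic relation for $\beta$. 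Without exhibiting this $Z$ (or otherwise showing the linear system is degenerate), the proposal does not establish the lemma.
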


\begin{proof}  Following Proposition \ref{prop:eqns}, we need to find $k\in K$, $0\neq Z\in \mathcal{Z}$ satisfying \ref{eqn:xtrace}, \ref{eqn:ytrace}, and \ref{eqn:dep}.  To that end, we use $k$ as hypothesized in the lemma.  We define $Z = \begin{bmatrix} \beta i & 0 & 0 \\ 0 & \beta i & z\\ 0 & -\overline{z} & -2\beta i\end{bmatrix}$ where $$z = 3i\tan(\theta)(\cos(\alpha)k_{11} + \sin(\alpha) \overline{k}_{12})$$ and where $$\beta =  -\frac{2p_3 \cos^2(\theta)}{p_3\cos^2(\theta) + q_3}.$$  We observe that by hypothesis, $z$ and $\beta$ are well-defined, and $\beta\neq 0$.  In particular, $Z\neq 0$.   We also note the following identities for $\beta$, which easily follow from Lemma \ref{lem:identities}:

\begin{equation}\label{eqn:beta1} \beta = \frac{2p_3\sin^2(\theta)(1-h(\theta))}{p_3\cos^2(\theta)-q_3} \end{equation} and  \begin{equation}\label{eqn:beta2} \cos^2(\theta)(1+\beta) = \sin^2(\theta)(1-h(\theta)).\end{equation}  We now proceed to verify each of the three conditions of Proposition \ref{prop:eqns}.

\bigskip

\textbf{\ref{eqn:xtrace}}

Given our choice of $A$ and $k$ above, \ref{eqn:xtrace} specializes to $$\sin^2(\theta)|\sin(\alpha) k_{11} - \cos(\alpha) \overline{k}_{21}|^2 p_3 = |k_{11}|^2 q_1 + |k_{21}|^2 q_2.$$  Using the fact that $|k_{21}|^2 = 1-|k_{11}|^2$ and equations \eqref{eqn:geqn} and \eqref{eqn:heqn}, verifying this equation is equivalent to verifying  $$h(\theta) = \frac{g(\theta)q_1 + (1-g(\theta))q_2}{\sin^2(\theta)p_3},$$ which is routine.

\bigskip

\textbf{\ref{eqn:ytrace}}

Given our choice of $A$ and $z$ above, \ref{eqn:ytrace} specializes to the equation $$((2\cos^2(\theta)-\sin^2(\theta))  p_3 + (q_1 + q_2 - 2q_3))\beta-6\sin^2(\theta) p_3|\cos(\alpha) k_{11} + \sin(\alpha) \overline{k}_{12}|^2 = 0.$$

Using equation \eqref{eqn:heqn}, we note that \begin{align*} |\cos(\alpha) k_{11} + \sin(\alpha) \overline{k}_{12}|^2 + h(\theta) &= \left| \begin{bmatrix} \cos(\alpha) k_{11} + \sin(\alpha) \overline{k}_{12} \\ -\sin(\alpha)k_{11} + \cos(\alpha)\overline{k}_{12} \end{bmatrix}\right|^2\\ &= \left| \begin{bmatrix} \cos(\alpha) & \sin(\alpha)\\ -\sin(\alpha) & \cos(\alpha)\end{bmatrix} \begin{bmatrix} k_{11}\\ \overline{k}_{21}\end{bmatrix}\right|^2\\ &= |k_{11}|^2 + |k_{21}|^2\\ &=1.\end{align*} In particular, verifying \ref{eqn:ytrace} reduces to verifying \begin{equation}\label{eqn:beta3} \beta = \frac{6\sin^2(\theta)p_3(1-h(\theta))}{(2\cos^2(\theta)-\sin^2(\theta))p_3 + (q_1 + q_2 - 2q_3)}.\end{equation}   Writing $\sin^2(\theta) = 1-\cos^2(\theta)$ and recalling that $p_3 = q_1 + q_2 + q_3$, we find that the denominator is simplifies to $3(p_3\cos^2(\theta) - q_3)$.  Then we see that equation \eqref{eqn:beta3} is nothing but equation \eqref{eqn:beta1}.
\bigskip

\textbf{\ref{eqn:dep}}

We will now verify \ref{eqn:dep}.  In fact, we will show that with our choice of $A,k,\beta$, and $z$, that $(Ad_{Ak} Y_1)_\mathfrak{p} = (Ad_{Ak} Z)_\mathfrak{p}$.  We recall that matrices in $\mathfrak{p}$ are determined entirely by their $(1,3)$ and $(2,3)$ entries.  As such, we will only list these entries.

A routine calculation shows that for generic $k = (k_{ij})\in K$ with $k_{33} = 1$ (so $k_{22} = \overline{k}_{11}$ and $k_{21} = -\overline{k}_{12}$), and generic $Z\in \mathcal{Z}$, that $(Ad_{Ak}Y_1)_\mathfrak{p}$ is given by $$3 i \sin(\theta)\begin{bmatrix} \cos^2(\alpha) k_{11} k_{12} - \sin^2(\alpha)\overline{k}_{11} \overline{k}_{12} + \sin(\alpha)\cos(\alpha)(1-2|k_{11}|^2)\\ \cos(\theta)( \cos^2(\alpha)(1-2|k_{11}|^2)-|k_{11}|^2 + \sin(\alpha)\cos(\alpha)(k_{11}k_{12} + \overline{k}_{11} \overline{k}_{12}))\end{bmatrix}.$$  Using the fact that $k\in K$, so that $|k_{11}|^2 + |k_{12}|^2 = 1$, it is easy to verify that this can be rewritten in the form $$(Ad_{Ak} Y_1)_\mathfrak{p} = 3i\sin(\theta) \begin{bmatrix} (\cos(\alpha) k_{12} - \sin(\alpha)\overline{k}_{11})(\cos(\alpha) k_{11} + \sin(\alpha) \overline{k}_{12})\\ -\cos(\theta)|\cos(\alpha) k_{12} -\sin(\alpha)\overline{k}_{11}|^2\end{bmatrix}.$$

Similarly, a routine calculation shows that for our specific choice of $z$, that $(Ad_{Ak} Z)_\mathfrak{p} = $ $$ \begin{bmatrix} 3i\sin(\theta)(\cos(\alpha)k_{12} - \sin(\alpha)\overline{k}_{11})(\cos(\alpha) k_{11} + \sin(\alpha)\overline{k}_{12})\\ 3i\tan(\theta)(\cos^2(\theta)-\sin^2(\theta))|\cos(\alpha) k_{11} + \sin(\alpha) \overline{k}_{12}|^2 + 3i\cos(\theta)\sin(\theta)\beta\end{bmatrix}.$$

It is now obvious that the $(1,3)$ entries of the equation $(Ad_{Ak} Y_1)_\mathfrak{p}) = (Ad_{Ak} Z)_{\mathfrak{p}}$ are identical.  Since $|\cos(\alpha) k_{12} - \sin(\alpha) \overline{k}_{11}|^2 = h(\theta)$ and $|\cos(\alpha) k_{11} + \sin(\alpha) \overline{k}_{12}|^2 = 1-h(\theta)$, the $(2,3)$ entries if $(Ad_{Ak} Y_1)_{\mathfrak{p}}$ and $(Ad_{Ak} Z)_{\mathfrak{p}}$ agree if and only if $$\beta = \frac{-\sin(\theta)\cos(\theta) h(\theta) - \tan(\theta)(\cos^2(\theta)-\sin^2(\theta))(1-h(\theta))}{\cos(\theta)\sin(\theta)}.$$   But this is simply a rearrangement of equation \eqref{eqn:beta2}, so it must hold.

\end{proof}

In order to use Lemma \ref{lem:important}, we need to find solutions to both \eqref{eqn:geqn} and \eqref{eqn:heqn}.  To that end, when $0<g(\theta)<1$, we will restrict attention to $k = (k_{ij})\in K$ with $k_{11} = \sqrt{g(\theta)}$, so that \eqref{eqn:geqn} is automatically satisfied.  As $|k_{11}|^2 + |k_{12}|^2 = 1$, we find that $k_{12}$ must have the form $k_{12} = \sqrt{1-g(\theta)} e^{i\gamma}$ for some $\gamma\in \mathbb{R}$.  We will find solutions to \eqref{eqn:heqn} by varying $\gamma$ and appealing to the Intermediate Value Theoerem.  This will require estimates on the left hand side of \eqref{eqn:heqn}.

%\begin{lem}  We have $$(|\sin(\alpha)|\sqrt{g(\theta)} - |\cos(\alpha)|\sqrt{1-g(\theta)})^2 \leq | \sin(\alpha) k_{11} - \cos(\alpha) \overline{k}_{12}|^2$$ and $$| \sin(\alpha) k_{11} - \cos(\alpha) \overline{k}_{12}|^2 \leq (|\sin(\alpha)|\sqrt{g(\theta)} + |\cos(\alpha)|\sqrt{1-g(\theta)})^2.$$

%\end{lem}

%\begin{proof}By the triangle inequality, we have $| \sin(\alpha) k_{11} - \cos(\alpha) \overline{k}_{12}|\leq |\sin(\alpha)|\sqrt{g(\theta)} + |\cos(\alpha)|\sqrt{1-g(\theta)}$.  Squaring both sides now gives $$| \sin(\alpha) k_{11} - \cos(\alpha) \overline{k}_{12}|^2\leq(|\sin(\alpha)|\sqrt{g(\theta)} + |\cos(\alpha)|\sqrt{1-g(\theta)})^2.$$

%For the other inequality, we simply square the reverse triangle inequality $$ ||\sin(\alpha)|\sqrt{g(\theta)} - |\cos(\alpha)|\sqrt{1-g(\theta)}|\leq | \sin(\alpha) k_{11} - \cos(\alpha) \overline{k}_{12}|^2$$
%\end{proof}

\begin{lemma}\label{lem:boundlhs} Suppose that for some $\theta \in \mathbb{R}$, that $0 < g(\theta) < 1$.  Suppose $k_{11} = \sqrt{g(\theta)}\in \mathbb{R}$ and $k_{12} = \sqrt{1-g(\theta)} e^{i\gamma}$ for some $\gamma\in \mathbb{R}$.  Then we have $$\min_{\alpha\in \mathbb{R}}| \sin(\alpha) k_{11} - \cos(\alpha) \overline{k}_{12}|^2  \leq \sin^2(\gamma)$$ and $$ \cos^2(\gamma)\leq  \max_{\alpha\in \mathbb{R}}| \sin(\alpha) k_{11} - \cos(\alpha) \overline{k}_{12}|^2.$$ 

\end{lemma}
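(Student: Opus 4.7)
My approach is to exploit the freedom in $\alpha$ to exhibit specific values witnessing the required bounds. Set $a = \sqrt{g(\theta)}$ and $b = \sqrt{1-g(\theta)}$, both strictly positive by hypothesis, with $a^2 + b^2 = 1$. With $k_{11} = a$ and $\overline{k}_{12} = b\,e^{-i\gamma}$, separating real and imaginary parts yields
\begin{equation*}
|\sin(\alpha)k_{11} - \cos(\alpha)\overline{k}_{12}|^2 = (a\sin\alpha - b\cos\alpha\cos\gamma)^2 + b^2\cos^2\alpha\sin^2\gamma.
\end{equation*}
Denote this quantity by $F(\alpha)$.

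For the first inequality, since $a > 0$, one can choose $\alpha_0 \in (-\pi/2,\pi/2)$ with $\tan\alpha_0 = (b/a)\cos\gamma$. At such $\alpha_0$ the first summand above vanishes, leaving
\begin{equation*}
F(\alpha_0) = b^2\cos^2(\alpha_0)\sin^2\gamma \leq \sin^2\gamma,
\end{equation*}
since both $b^2$ and $\cos^2(\alpha_0)$ are at most $1$. Hence $\min_\alpha F(\alpha) \leq F(\alpha_0) \leq \sin^2\gamma$.

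For the second inequality, the plan is to establish the complementary identity
\begin{equation*}
F(\alpha) + F(\alpha + \pi/2) = 1
\end{equation*}
for every $\alpha$, which follows by expanding both sides and using $a^2 + b^2 = 1$ (one finds $F(\alpha) = a^2\sin^2\alpha + b^2\cos^2\alpha - 2ab\sin\alpha\cos\alpha\cos\gamma$, so the cross-terms in $F(\alpha)$ and $F(\alpha+\pi/2)$ cancel). Applying this at $\alpha_0$ yields
\begin{equation*}
F(\alpha_0 + \pi/2) = 1 - F(\alpha_0) \geq 1 - \sin^2\gamma = \cos^2\gamma,
\end{equation*}
so $\max_\alpha F(\alpha) \geq \cos^2\gamma$.

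Each ingredient is elementary trigonometry, so no serious obstacle arises. The only real decision is the choice of $\alpha_0$: picking it to kill the ``off-diagonal'' summand $(a\sin\alpha - b\cos\alpha\cos\gamma)^2$ reduces the first bound to the trivial estimate $b^2\cos^2(\alpha_0) \leq 1$, and the complementary identity then transfers this bound from the minimum to the maximum.
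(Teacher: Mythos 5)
Your proof is correct, and for the first bound it matches the paper's argument exactly (same choice of $\alpha_0$ killing the first summand). For the second bound you diverge: the paper instead picks $\sin\alpha = \sqrt{g(\theta)}$, $\cos\alpha = -\sqrt{1-g(\theta)}$, discards the nonnegative term $b^2\cos^2\alpha\sin^2\gamma$, and claims $\bigl(g(\theta)(1-\cos\gamma) + \cos\gamma\bigr)^2 \geq \cos^2\gamma$ on the grounds that $g(\theta)(1-\cos\gamma)\geq 0$. That justification is incomplete: $(x+y)^2 \geq y^2$ for $x\geq 0$ requires $x+2y\geq 0$, which can fail when $\cos\gamma<0$ (e.g.\ $g(\theta)=1/2$, $\gamma=\pi$ gives $(g(1-c)+c)^2 = 0 < 1 = \cos^2\gamma$), so the paper's choice of $\alpha$ need not witness the claimed lower bound for the maximum in full generality. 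Your route — establishing $F(\alpha)+F(\alpha+\pi/2)=1$ by expanding $F(\alpha)=a^2\sin^2\alpha+b^2\cos^2\alpha-2ab\cos\gamma\sin\alpha\cos\alpha$ and noting the cross-terms cancel — is airtight for all $\gamma$ and transfers the min bound directly to a max bound with no extra case analysis. So your argument is not only a genuinely different (and arguably cleaner) derivation of the second inequality, it is actually more robust than the paper's; the paper's proof is only valid as written when $\cos\gamma\geq 0$, which happens to hold in the applications (where $\gamma\in(0,\pi/4)$) but is not imposed in the lemma's hypotheses.
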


\begin{proof} By our choice of $k_{11}$ and $k_{12}$, we see $|\sin(\alpha)k_{11} - \cos(\alpha) \overline{k}_{12}|^2$ takes the form \begin{equation}\label{eqn:beta}\left(\sin(\alpha)\sqrt{g(\theta)} - \cos(\alpha)\cos(\gamma)\sqrt{1-g(\theta)}\right)^2 + \cos^2(\alpha)\sin^2(\gamma)(1-g(\theta)). \end{equation}

By selecting $\alpha$ for which $\tan(\alpha) = \frac{\cos(\gamma) \sqrt{1-g(\theta)}}{\sqrt{g(\theta)}}$, the first term in parenthesis vanishes, so $|\sin(\alpha) k_{11} - \cos(\alpha) \overline{k}_{12}|^2$ reduces to $\cos^2(\alpha)\sin^2(\gamma)(1-g(\theta))$.  Since $\cos^2(\alpha)$ and $1-g(\theta)$ are both bounded above by $1$, it follows that $\min_{\alpha\in \mathbb{R}}| \sin(\alpha) k_{11} - \cos(\alpha) \overline{k}_{12}|^2\leq \sin^2(\gamma).$

For the upper bound, we select an $\alpha$ for which $\sin(\alpha) = \sqrt{g(\theta)}$ and $\cos(\alpha) = -\sqrt{1-g(\theta)}$.  Then, by discarding the last term of \eqref{eqn:beta} we find \begin{align*} | \sin(\alpha) k_{11} - \cos(\alpha) \overline{k}_{12}|^2 &\geq \left(g(\theta)+(1-g(\theta))\cos(\gamma)\right)^2\\ &= (g(\theta)(1-\cos(\gamma)) + \cos(\gamma))^2\\ &\geq \cos^2(\gamma),\end{align*} where the last inequality holds because $g(\theta)(1-\cos(\gamma))\geq 0.$
\end{proof}

We can now prove Theorem \ref{thm:recog}.

\begin{proof}
By continuity, there is a neighborhood $U\subseteq \mathbb{R}$ of $\theta_0$ for which both $0<h(\theta_0) < 1$ and $0 < g(\theta_0) < 1$ on $U$.  Further, by shrinking $U$ if necessary, we may assume that all $\theta\in U$ satisfy all the inequalities in the statement of Lemma \ref{lem:important}.  In particular, we may assume $\theta_0$ satisfies all these inequalities.  Now, consider the subset $V\subseteq \mathcal{F}$ defined by \begin{align*}V = \big\{A(\theta,\alpha)\in\mathcal{F}: &\theta \in U \text{ and }(|\sin(\alpha)|\sqrt{g(\theta)} - |\cos(\alpha)|\sqrt{1-g(\theta)})^2 < h(\theta) \\ \text{ and } h(\theta) &< (|\sin(\alpha)|\sqrt{g(\theta)} + |\cos(\alpha)|\sqrt{1-g(\theta)})^2\big\}\end{align*}

The set $V$ is obviously an open subset of $\mathcal{F}$.  Below, we will demonstrate the following three claims:

\begin{enumerate} \item $V$ is non-empty.

\item  Under the $U(2)\times T^2$-action of Proposition \ref{prop:F}, the orbit of $V$ is an open subset of $SU(3)$.

\item  Every point of $V$ projects to a point in $SU(3)$ having at least one zero-curvature plane.

\end{enumerate}

Temporarily assuming these claims, we may complete the proof as follows.  The claims establish the existence of a non-empty open subset of $SU(3)$ consisting of points whose projections to $E_{p,q}$ all have at least one zero-curvature plane.  Since  the natural projection $SU(3)\rightarrow E_{p,q}$ is a submersion, it is open, and hence the projection of this open subset of $SU(3)$ is an open subset of $E_{p,q}$ witnessing the fact that the Wilking metric is not almost positively curved.

We now establish the claims.

\textbf{Claim 1}:  To see $V$ is non-empty, we begin with the point $\theta_0\in U$.  Select a real number  $\gamma_0>0$ with the property that $$0 < \sin^2(\gamma_0) < h(\theta_0) < \cos^2(\gamma_0) < 1.$$  Such a $\gamma_0$ exists because $0<h(\theta_0) < 1$ and both $\sin$ and $\cos$ are continuous.  Note that $\gamma_0 < \pi/4$ for otherwise the inequality $\sin^2(\gamma_0) < \cos^2(\gamma_0)$ is false. 
 We let $k_{11} = \sqrt{g(\theta)}\in \mathbb{R}$ and $k_{12} = \sqrt{1-g(\theta)} e^{i\gamma_0}.$

Now, by Lemma \ref{lem:boundlhs}, the real function $\alpha \mapsto |\sin(\alpha) k_{11} - \cos(\alpha) \overline{k}_{12}|^2$ has a range containing the closed interval $[\sin^2(\beta_0),\cos^2(\beta_0)]$.  In particular, it contains $h(\theta_0)$.  Thus, by the Intermediate Value Theorem, there is an $\alpha_0\in\mathbb{R}$ for which $$|\sin(\alpha_0)k_{11} -\cos(\alpha_0) \overline{k}_{12}|^2 = h(\theta_0).$$

We now show that $A(\theta_0,\alpha_0)\in V$, so that $V\neq \emptyset.$  To see this,  simply note that the triangle inequality implies that both $$\left||\sin(\alpha_0)|\sqrt{g(\theta_0)} - |\cos(\alpha)|\sqrt{1-g(\theta_0)}\right|\leq |\sin(\alpha_0) k_{11} -\cos(\alpha_0) \overline{k}_{12}|$$ and that $$|\sin(\alpha_0) k_{11} -\cos(\alpha_0) \overline{k}_{12}| \leq  |\sin(\alpha_0) | \sqrt{g(\theta_0)} + |\cos(\alpha)| \sqrt{1-g(\theta_0)}. $$  Moreover, equality can only occur when $\gamma_0$ is an integral multiple of $\pi$.  Recalling that $\gamma_0\in (0,\pi/4)$, we see that both inequalities are strict.  Squaring both strict inequalities and using the fact that $|\sin(\alpha_0)k_{11} - \cos(\alpha_0) \overline{k}_{12}|^2 = h(\theta_0)$, it now follows that $A(\theta_0,\alpha_0)\in V$ so $V\neq \emptyset.$

\textbf{Claim 2}:   We next claim that, under the $U(2)\times T^2$ action of Proposition \ref{prop:F}, that the orbit of $V$ is an open subset of $SU(3)$.  Indeed, following Corollary \ref{cor:sameorbit}, $A = (a_{ij}) \in SU(3)$ is orbit equivalent to an element of $\mathcal{F}$ where $|a_{33}|^2 = \cos^2(\theta), |a_{31}|^2 = \sin^2(\theta) \sin^2(\alpha)$, and $|a_{32}|^2 = \sin^2(\theta)\cos^2(\alpha)$.  These latter two equalities can be rearranged to $|\sin(\alpha)| = \frac{|a_{31}|}{\sqrt{1-|a_{33}|^2}}$ and $|\cos(\alpha)| = \frac{|a_{32}|}{\sqrt{1-|a_{33}|^2}}.$  We also note that by replacing all $\sin^2(\theta)$ with $1-\cos^2(\theta) = 1-|a_{33}|^2$, we may view both $h$ and $g$ as functions of $a_{33}$.

Then Corollary \ref{cor:sameorbit} implies that the $U(2)\times T^2$ orbit of $V$ is given by \begin{align*} &\left\{ A = (a_{ij})\in SU(3): \left(\frac{|a_{31}|}{\sqrt{1-|a_{33}|^2}} \sqrt{g(a_{33})} - \frac{|a_{32}|}{\sqrt{1-|a_{33}|^2}} \sqrt{1-g(a_{33})}\right)^2 < h(a_{33})\right. \\ &\text{and } h(a_{33}) < \left. 
 \left(\frac{|a_{31}|}{\sqrt{1-|a_{33}|^2}} \sqrt{g(a_{33})} + \frac{|a_{32}|}{\sqrt{1-|a_{33}|^2}} \sqrt{1-g(a_{33})}\right)^2\right\},  \end{align*} which is obviously open.

\bigskip

\textbf{Claim 3}:  We now show that Lemma \ref{lem:important} implies that every point in $V$ has at least one zero-curvature plane.  So, let $A(\theta,\alpha)\in V$.  We set $k_{11} = \sqrt{g(\theta)}$, so that \eqref{eqn:geqn} is satisfied.  Writing $k_{12} = \sqrt{1-g(\theta)} e^{i\gamma}$, our goal is to find $\gamma$ solving \eqref{eqn:heqn}.

We observe that by the reverse triangle inequality, we have the bounds $$\left| |\sin(\alpha)| \sqrt{g(\theta)} - |\cos(\alpha)|\sqrt{1-g(\theta)}\right| \leq |\sin(\alpha) k_{11} - \cos(\alpha) \overline{k}_{12}|$$ with equality achieved at either $\gamma  = 0$ or $\gamma = \pi$, depending on the signs of $\sin(\alpha)$ and $\cos(\alpha)$.  Similarly, by the triangle inequality we have the bound $$|\sin(\alpha) k_{11} - \cos(\alpha) \overline{k}_{12}| \leq |\sin(\alpha)|\sqrt{g(\theta)} + |\cos(\alpha)|\sqrt{1-g(\theta)}$$ which equality similarly achieved at either $\gamma = 0$ or $\gamma = \pi$.

Squaring these inequalities, the fact that $A(\theta,\alpha)\in V$ implies that  $h(\theta)$ lies between the two extremes of the function $\gamma \mapsto  |\sin(\alpha) k_{11} - \cos(\alpha)\overline{k}_{12}|^2$.  Thus, the Intermediate Value Theorem implies the existence of a $\gamma_0\in \mathbb{R}$ for which $|\sin(\alpha) k_{11} -\cos(\alpha) \overline{k}_{12}|^2 = h(\theta)$.  That is, \eqref{eqn:heqn} is solved by this choice of $\gamma_0$.  Lemma \ref{lem:important} now implies the existence of a zero-curvature plane at $[A(\theta,\alpha)]$.

\end{proof}

We now use Theorem \ref{thm:recog} to show that, under the hypotheses of Proposition \ref{prop:choices}, that Wilking's metric is not almost positively curved.  That is, we will now prove Theorem \ref{thm:almpos}.

\begin{proof}Assume $q_1,q_2,$ and $q_3$ fulfill conditions $1a)$ through $1d)$ of Proposition \ref{prop:choices}.  That is, we assume that $p_3 > 0$, $q_1 >0$, $q_1 > q_2$, and that $q_2 q_3 < 0$ so $q_2$ and $q_3$ have opposite signs.  We will break the proof into cases based on the cases $2a)$, $2b)$, and $2c)$ in Proposition \ref{prop:choices}.

\textbf{Case 2c)}:  Assume that $q_3 < 0$ and $q_1 + q_3 > 0$.  We will additionally assume that $q_2 + q_3\leq 0$; the opposite case where $q_2  + q_3>0$ will be accounted for in Case 2a) below.  We also observe that the following proof does not use the hypothesis that $q_3 < 0$.

 We compute that $h(\pi/2) = 1$, $h'(\pi/2) = 0$, and $h''(\pi/2) = -2$, so that $0 < h(\theta) < 1$ for all $\theta$ sufficiently close to $\pi/2$.

 We also compute that $g(\pi/2) = \frac{q_1 + q_3}{q_1-q_2}$, $g'(\pi/2) = 0$, and $g''(\pi/2) = \frac{-4p_3}{q_1-q_2}< 0$.  By hypothesis, the numerator and denominator of $g(\pi/2)$ are both positive, so $g(\pi/2) > 0$.

 In addition, we also see that $g(\pi/2) \leq 1$.  Indeed, we have \begin{align*} q_2 + q_3 &\leq  0\\
 q_3 &\leq -q_2\\
 q_1 + q_3 &\leq q_1 - q_2\\
 \frac{q_1+q_3}{q_1-q_2} &\leq 1.\end{align*}

 It follows that for all $\theta$ sufficiently close to $\pi/2$, so $0 < g(\theta) < 1$.  Since we have shown $0< h(\theta)<1$ for $\theta$ sufficiently close to $\pi/2$, we may apply Theorem \ref{thm:recog} to complete the proof of this case.

\bigskip

\textbf{Case 2a)}:  Here, we assume that $q_2 + q_3 \geq 0$.  Since $q_1 > q_2$ by hypothesis, $q_1 + q_3 > 0$.  In particular, if $q_2 + q_3 = 0$, it follows under Case 2c) above.  As such, for the remainder of the proof of this case, we may assume that $q_2 + q_3 > 0$.

We define $\theta_0\in(0,\pi/2)$ by equation $$\cos^2(\theta_0) = \frac{-q_3(q_2+q_3)}{(q_2-q_3)p_3}.$$  To see this is well-defined, we need to verify that $0< \frac{-q_3(q_2+q_3)}{(q_2-q_3)p_3}< 1$.  The case where $q_2 <0$ and $q_3 > 0$ is similar to the case where $q_2 > 0$ and $q_3 < 0$, so we only show the first case.  So, assume that $q_2 < 0$ and $q_3 > 0$.  Then the numerator and denominator are both negative, so the fraction is positive.  In addition, since $q_1 > 0$ and $p_3 > 0$, we see that both $q_1 q_3 > 0$ and $-q_2p_3>$, so that $0 < q_1 q_3 - q_3 p_3$.  Then \begin{align*} 0 &< q_1 q_3 - q_2 p_3\\
q_3(q_2+q_3)&< q_3(q_2+q_3) + q_1 q_3 - q_2p_3\\
q_3(q_2+q_3)&< q_3p_3 - q_2p_3\\
 q_3(q_2 + q_3)&< (q_3-q_2)p_3\\
 \frac{q_3(q_2+q_3)}{(q_3-q_2)p_3}&< 1\\
 \frac{-q_3(q_2+q_3)}{(q_2-q_3)p_3}&< 1.\end{align*}

 A simple computation now reveals that $g(\theta_0) = 1$.  Further, from Lemma \ref{lem:identities}, $g'(\theta_0) \neq 0$.  In particular, any neighborhood of $\theta_0$ contains points $\theta$ where $0<g(\theta)< 1$.  We also compute that $h(\theta_0) = \frac{(q_2 - q_3)q_1}{q_2p_3 - q_1 q_3}$.  We claim that $0 < h(\theta_0) < 1$.  Believing this, it follows that any neighborhood of $\theta_0$ contains points $\theta$ where both $0< h(\theta) < 1$ and $0<g(\theta)<1$.  Then Theorem \ref{thm:recog} completes the proof in this case.

 It remains to establish the claim.  Again, the case where $q_2 < 0$ and $q_3 > 0$ is similar to the case where $q_2 > 0$ and $q_3<0$, so we only prove it in the first case.  So, assume that $q_2 < 0$ and $q_3 > 0$.  The numerator of $h(\theta_0)$ is negative, as is the denominator, so $h(\theta_0) >0$.  On the other hand, since $q_2 + q_3 > 0$, we have \begin{align*} q_1 &< p_3\\
 q_1 q_2 &>  q_2p_3\\
 q_1 q_2 - q_1 q_3 &> q_2p_3 - q_1 q_3\\
 (q_2-q_3) q_1 &> q_2 p_3 - q_1 q_3\\
 \frac{(q_2-q_3)q_1}{q_2p_3 - q_1 q_3} &< 1.\end{align*}

 The completes the proof in case 2a).

 \textbf{Case 2b)}:  Assume that $q_2 < 0$ and that $q_1 + q_2 \geq 0$.  We break into subcases depending on whether $q_1 + q_2 =0$ or not.  So, assume initially that $q_1 + q_2 = 0$.  Then on $(0,\pi/2)$,  $h(\theta)$ simplifies to $h(\theta) = \frac{1}{\cos^2(\theta)+1}$ while $g(\theta) = \frac{q_1(1+\cos^2(\theta)) + q_3\sin^2(\theta)}{2q_1(1+\cos^2(\theta))}$.  Then $0 < h(\theta) < 1$ for all $\theta \in (0,\pi/2)$ and $g(0) = \frac{1}{2}.$  Thus, for all $\theta > 0$ sufficiently close to $0$, we have $0 < h(\theta) < 1$ and $0 < g(\theta) < 1$.  So, by Theorem \ref{thm:recog}, there is an open set of points having a zero-curvature plane in this case as well.

 We may thus assume that $q_1 + q_2 > 0$.  Then $0 < \frac{q_3}{p_3} < 1$, so there is a $\theta_0\in \mathbb{R}$ with the property that $\cos^2(\theta_0) = \frac{q_3}{p_3}$.  We then compute $h(\theta_0) = 1$ and, from Lemma \ref{lem:identities}, that $h'(\theta_0) \neq 0$.  In particular, any neighborhood of $\theta_0$ contains points for which $0<h(\theta)<1$.  Moreover, $g(\theta_0) = \frac{q_1}{q_1-q_2}$.  By our hypotheses on the $q_i$, it follows that $0<g(\theta_0) < 1$.  By continuity, for all $\theta$ sufficiently close to $\theta_0$, $0<g(\theta) < 1$.  It follows that any neighborhood of $\theta_0$ contains a point for which both $0 < h(\theta) <1$ and $0<g(\theta)<1$.  One final application of  Theorem \ref{thm:recog} then completes the proof in this case, and hence completes the proof of Theorem \ref{thm:almpos}.

\end{proof}

\end{document}